\documentclass[11pt,a4paper,usenames,dvipsnames,reqno]{amsart}
\usepackage{amscd}
\usepackage{amsfonts,amssymb}
\usepackage{mathtools,stmaryrd}
\usepackage[margin=1in]{geometry}
\usepackage{fancybox}
\usepackage{pdflscape}
\usepackage{rotating}

\usepackage[all,arc]{xy}
\usepackage{enumerate}
\usepackage{mathrsfs}
\usepackage{color}   
\usepackage{hyperref}
\hypersetup{
    colorlinks=true, 
    linktoc=all,     
    linkcolor=blue,  
}
\usepackage{amsthm}
\usepackage{amsmath}
\usepackage{graphicx}
\usepackage{wasysym}
\usepackage{pgf,tikz}
\usetikzlibrary{positioning}
\usepackage{ marvosym }
\usepackage{tikz-cd}
\usepackage{ textcomp }
\usepackage{bbm}
\usepackage{ stmaryrd }
\usepackage{enumitem}
\usepackage{booktabs}
\usepackage{array}

\numberwithin{equation}{section}

\newcommand{\cc}{\mathbb C}

\newcommand{\zz}{\mathbb Z}

\newcommand{\qq}{\mathbb Q}
\newcommand{\rr}{\mathbb R}
\newcommand{\A}{\mathbb A}

\newcommand{\fg}{\mathfrak g}

\newcommand{\fp}{\mathfrak p}

\newcommand{\der}{\mathrm{der}}

\newcommand{\calM}{\mathcal{M}}

\DeclareMathOperator{\Hom}{Hom}

\DeclareMathOperator{\calf}{\mathcal{F}}

\makeatletter
\def\Ddots{\mathinner{\mkern1mu\raise\p@
\vbox{\kern7\p@\hbox{.}}\mkern2mu
\raise4\p@\hbox{.}\mkern2mu\raise7\p@\hbox{.}\mkern1mu}}
\makeatother

\makeatletter
\DeclareRobustCommand\bigop[1]{%
  \mathop{\vphantom{\sum}\mathpalette\bigop@{#1}}\slimits@
}
\newcommand{\bigop@}[2]{%
  \vcenter{%
    \sbox\z@{$#1\sum$}%
    \hbox{\resizebox{\ifx#1\displaystyle.9\fi\dimexpr\ht\z@+\dp\z@}{!}{$\m@th#2$}}%
  }%
}
\makeatother

\newtheorem{Thm}{Theorem}[section]
\newtheorem{Prop}[Thm]{Proposition}

\newtheorem{Lem}[Thm]{Lemma}
\newtheorem{Cor}[Thm]{Corollary}

\theoremstyle{definition}
\newtheorem{Def}[Thm]{Definition}

\theoremstyle{remark}
\newtheorem{Rem}[Thm]{Remark}
\newtheorem{Ex}[Thm]{Example}

\newcommand*{\send}{\mathcal{E}\kern -.5pt nd}

\newcommand{\quash}[1]{}

\title[Weyl algebras on Braverman-Kazhdan spaces]{Weyl algebras on Braverman-Kazhdan spaces}

\author{Chun-Hsien Hsu}
\address{Department of Mathematics\\
University of Chicago\\
Chicago, IL 60637}
\email{chunhsien@uchicago.edu}

\subjclass[2020]{Primary 16S32; Secondary 14L30}
\keywords{Differential operators, Braverman-Kazhdan spaces, Bernstein-Sato polynomial}

\begin{document}
\begin{abstract}
Let $G$ be a split, simply connected, almost simple algebraic group over a field of characteristic zero, and let $P$ be a maximal parabolic subgroup of $G$. We study the ring of differential operators on $P^{\mathrm{der}}\backslash G$, showing that it shares several key structural properties with classical Weyl algebras. We also develop a corresponding theory of $D$-modules.
\end{abstract}

\maketitle
\setcounter{tocdepth}{1}
\tableofcontents

\section{Introduction}

Throughout the paper $K$ is a field of characteristic zero. For an affine $K$-variety $X$, let $\mathcal{D}(X)$ be the ring of differential operators on $X$ in the sense of Grothendieck. Explicitly, let $\mathcal{D}_0:=K[X]$ and for $i\ge 1$ let
\begin{align*}
    \mathcal{D}_i(X):=\{ D\in \mathrm{End}_K (K[X]): [D,f]:=Df-fD\in \mathcal{D}_{i-1}(X) \textrm{ for all } f\in K[X]\}.
\end{align*}
Here we have identified $f\in K[X]$ as an element in $\mathrm{End}_K (K[X])$ by function multiplication. Then $\mathcal{D}(X):=\bigcup_{i\ge 0} \mathcal{D}_i(X)$ is a subalgebra of $\mathrm{End}_K (K[X]).$ Note that $\{\mathcal{D}_i(X)\}_{i\ge 0}$ is the order filtration of $\mathcal{D}(X)$.  When $X$ is smooth, $\mathcal{D}(X)$ enjoys many desirable properties. For instance:
\begin{enumerate}
    \item The ring $\mathcal{D}(X)$ is a simple Noetherian domain.
    \item The associated graded $K$-algebra $\mathrm{gr}\,\mathcal{D}(X)$ with respect to the order filtration is naturally isomorphic to $K[T^\ast X^{\mathrm{sm}}],$ where $T^\ast X^{\mathrm{sm}}=T^\ast X$ denotes the cotangent bundle of $X$.
    \item Every finitely generated left $\mathcal{D}(X)$-module satisfies Bernstein inequality. In particular, this allows one to define the notion of holonomic $D$-module.
\end{enumerate}
\noindent These statements may no longer hold if $X$ is singular. In the present paper, we show that the above properties remain valid for a new family of singular varieties.

Let $G$ be a split, simply connected, almost simple algebraic group over $K,$ and let $P$ be a parabolic subgroup of $G$. The scheme $X_P^\circ:=P^{\mathrm{der}}\backslash G$ is strongly quasi-affine, i.e., $K[X_P^\circ]$ is a finitely generated $K$-algebra, and the natural map $X_P^\circ\to X_P:=\mathrm{Spec}\, K[X_P^\circ]$ is an open immersion. We refer to $X_P$ as a \textbf{Braverman-Kazhdan space}. The variety $X_P$ is generally singular; it is smooth precisely when it is a vector space. Nevertheless, $X_P$ is normal, and the complement $X_P-X_P^\circ$ has codimension at least $2$. Consequently, every differential operator on $X_P^\circ(K)$ extends uniquely to $X_P(K)$. 

When $K=\cc$ and $B$ is a Borel subgroup, $\mathcal{D}(X_B)$ was first studied in \cite{BBP:Gluingbasicaffine}, where it was shown that $\mathcal{D}(X_B)$ is Noetherian and that the category of $\mathcal{D}(X_B)$-modules can be described as a glued category. Properties (1)-(3) were subsequently established in \cite{Stafford, GR:basic}. A key observation in \cite{BK:basicaffine, BBP:Gluingbasicaffine} is that if $P\supset B$ is the minimal parabolic subgroup associated to a simple root of $G$, then the natural projection $X_B\to X_{P}$ is a fibration with fiber $\A^2-\{0\}$. This allows analytic difficulties to be bypassed when constructing ``Fourier automorphisms"  on $\mathcal{D}(X_B)$ by  reducing to the classical Weyl algebra $\mathcal{D}(\A^2-\{0\})=\mathcal{D}(\A^2)$. However, this fiber bundle method is special to Borel subgroups, since in this case the fibers are essentially affine spaces, whose ring of differential operators is well understood.

In the present paper, we prove that $\mathcal{D}(X_P)$ also enjoys properties (1)-(3) when $P$ is maximal and $G$ is not of type $E$ or $F$\footnote{This assumption is only used to establish the existence of $W_{X_P}$, which relies on \cite{Hsu:asymptotics}, and is not used elsewhere in the paper.}. In \cite{Hsu:asymptotics}, by  developing the necessary harmonic analysis on $X_P$ in the archimedean case, the author introduced a $\cc$-algebra $W_{X_P}$ in the spirit of \cite{BBP:Gluingbasicaffine}, and proved that $W_{X_P}$ can be defined over $\qq$ and that it embeds into $\mathcal{D}(X_P).$ The present paper serves as a continuation of this work to study $W_{X_P}$ and completes the basic theory of $D$-modules on $X_P$.

We now outline our approach. We review the definition of $W_{X_P}$ in \S \ref{sec:review}. In \S \ref{sec:basic}, we show that $W_{X_P}$ is a simple domain by identifying a distinguished collection of elements in $W_{X_P}$ that generate the identity. In the classical Weyl algebra, this is a consequence of the product rule, i.e., $[x,\partial]=1$. The existence of such elements plays a fundamental role throughout the paper. 

We next prove that every finitely generated left $W_{X_P}$-module enjoys Bernstein inequality, namely, its Gelfand–Kirillov dimension lies between $\dim X_P$ and $2\dim X_P$. In particular, this implies $W_{X_P}$ is an Ore domain, which allows us to perform a localization argument to prove $W_{X_P}=\mathcal{D}(X_P)$ in Theorem \ref{thm:Weyl=Diff}. We subsequently adapt the argument of  \cite{GR:basic} to prove $\mathrm{gr}\,\mathcal{D}(X_P)\cong K[T^\ast X^\circ_P]$ in Theorem \ref{thm:filtration} via quantization. Finally, we give an explicit description of the category of finitely generated left $\mathcal{D}(X_P)$-modules as a glued category in the sense of \cite{BBP:Gluingbasicaffine}. As an application, we prove the existence of Bernstein-Sato polynomials on $X_P,$ and establish meromorphic continuation of Igusa's local zeta functions of $X_P$ (in the archimedean setting). 

We close this introduction by relating our work to the broader Braverman-Kazhdan-Ng\^{o} program, and more generally to the Poisson summation conjecture. Roughly speaking, it is expected that on an affine normal spherical variety $X$ over $\rr$, there exists a Schwartz space $\mathcal{S}(X(\rr))$ with a suitable Fourier theory that allows one to generalize Tate's thesis, in which local $L$-functions are realized as the greatest common divisor of zeta integrals. Furthermore, one expects that $\mathcal{S}(X(\rr))$ is local, so in particular it should be stable under multiplication by the regular functions $\cc[X]$. Equivalently, this suggests that in the archimedean case
\begin{align*}
    \mathcal{S}(X(\rr))=\{ f\in L^2(X(\rr)): Df\in L^2(X(\rr)) \textrm{ for all } D\in \mathcal{D}(X)\}
\end{align*}
as proposed in \cite{BKnormalized}. The identity above is confirmed when $X=X_P$ and $P$ is maximal \cite[Theorem 7.16]{Hsu:asymptotics} as a consequence of meromorphic continuation of Igusa's local zeta functions. Interestingly, the study of $\mathcal{D}(X_P)$ also leads to a better understanding of the composition series of degenerate principal series $\mathrm{Ind}_P^G\chi$ (see \cite[\S 7.3]{Hsu:asymptotics}). These perspectives suggest a new connection between harmonic analysis on $X$ and the theory of $D$-modules on $X$, which may merit further investigation. We refer the reader to \cite{Modulation} for a more detailed proposal. 

\subsection*{Acknowledgments}
The author thanks Yiannis Sakellaridis for encouragement.

\section{Preliminaries}\label{sec:review}

Let $X$ be an affine variety over $K$. For a differential operator $D\in \mathcal{D}(X)$, its order $\mathrm{ord}(D)\in \zz_{\ge 0}$ is the smallest integer $i$ such that $D\in \mathcal{D}_{i}(X).$ Often we view $f\in K[X]$ as an element in $\mathrm{End}_K(K[X])$. To avoid confusion between the two natural ways $D$ and $f$ interact, we adopt the following notation: we write $Df\in \mathcal{D}(X)$ for the product of $D$ and $f$ in $\mathcal{D}(X),$ and write $D(f)\in K[X]$ for the result of applying $D$ to $f.$

\subsection{Weyl algebras on Braverman-Kazhdan spaces}

Let $G$ be a split, simply connected, almost simple algebraic group over $\qq$. Fix a maximal split torus $T$ and a Borel subgroup $B\supset T$ of $G$. Let $P\supseteq B$ be a maximal parabolic subgroup of $G$ and $P^{\mathrm{der}}$ be its derived subgroup. The scheme $X_P^\circ:=P^{\mathrm{der}}\backslash G$ is strongly quasi-affine, i.e., the ring $\qq[X_P^\circ]$ is finitely generated and the natural map $X_P^\circ\longrightarrow X_P:=\mathrm{Spec}\, (\qq[X_P^\circ])$ is an open immersion. The Braverman-Kazhdan space $X_P$ is a normal spherical variety.

\begin{Lem}\label{lem:XP}
    The spherical variety $X_P$ is factorial and Gorenstein.
\end{Lem}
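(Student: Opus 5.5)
Factoriality and the Gorenstein property both reduce to statements about the coordinate ring $A:=\qq[X_P]=\qq[X_P^\circ]=\qq[G]^{P^{\der}}$ (left invariants). We first show $A$ is a UFD, and then deduce Gorenstein from factoriality together with rationality of singularities.

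\textbf{Factoriality.} Since $X_P$ is normal and $X_P-X_P^\circ$ has codimension at least $2$, we have $\mathrm{Cl}(X_P)=\mathrm{Cl}(X_P^\circ)=\mathrm{Pic}(X_P^\circ)$, the last equality because $X_P^\circ$ is smooth. To compute $\mathrm{Pic}(P^{\der}\backslash G)$ we use the standard exact sequence for a homogeneous space $H\backslash G$ with $G$ simply connected (so $\mathrm{Pic}(G)=0$ and $\qq[G]^\times=\qq^\times$):
\[
\qq[G]^\times\to \qq[H\backslash G]^\times \to \mathrm{X}^*(H)\to \mathrm{Pic}(H\backslash G)\to \mathrm{Pic}(G)=0 .
\]
Here $H=P^{\der}$. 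Since $P^{\der}$ is generated by its unipotent radical and the derived group of the Levi, it has no nontrivial characters, so $\mathrm{X}^*(P^{\der})=0$. Hence $\mathrm{Pic}(X_P^\circ)=0$ and $\mathrm{Cl}(X_P)=0$, so the normal Noetherian domain $A$ is a UFD. Over $\qq$ one should check this geometrically; since $G$ is split, character groups and Picard groups do not change on base change to $\bar\qq$, and Galois descent of principal divisors then handles the rational case. Alternatively, we can argue directly over $\qq$ with the split forms.

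\textbf{Gorenstein.} A factorial variety with rational singularities whose canonical class is Cartier is Cohen–Macaulay, hence Gorenstein. Factoriality gives $\mathrm{Cl}(X_P)=0$, so $K_{X_P}$ is automatically Cartier (indeed trivial). It remains to know that $X_P$ is Cohen–Macaulay. For this we use that $X_P$ is an affine spherical variety, hence has rational singularities by the theorem of Popov/Kempf–Brion that normal spherical varieties in characteristic zero have rational singularities, and rational singularities imply Cohen–Macaulay. Alternatively, one can invoke that $A=\bigoplus_\lambda V_\lambda^*$ (the sum over the dominant weights $\lambda=n\varpi_P$ for the fundamental weight of $P$) is a Hodges-type algebra with a good filtration. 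Combining the two facts, $\omega_{X_P}\cong\mathcal O_{X_P}$ is invertible and $X_P$ is CM, so $X_P$ is Gorenstein.

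\textbf{Main obstacle.} The delicate step is the Cohen–Macaulay/rational singularities input, since it is a nontrivial external theorem. The exact-sequence computation also needs $\qq[P^{\der}\backslash G]^\times=\qq^\times$, which follows since $X_P$ is affine with an open $G$-orbit whose complement has codimension at least $2$, so nonconstant units would give nonzero characters of $P^{\der}$. If citing Brion feels heavy, I would instead show that $\mathrm{gr}A$ with respect to a $U$-invariant filtration degenerates to a toric (semigroup) ring of a saturated monoid, which is CM by Hochster, and lift the CM property.
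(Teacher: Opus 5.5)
Your proposal is correct and follows essentially the same route as the paper. Both arguments kill the class group of $X_P^\circ$ using $\mathrm{Pic}(G)=0$ for simply connected $G$ together with the absence of characters on $P^{\der}$; the paper packages this as Milne's injection $\mathrm{Cl}(X_P^\circ)\hookrightarrow\mathrm{Cl}(G)$. Both then get Cohen--Macaulayness from rational singularities of normal spherical varieties, and deduce Gorenstein from Cohen--Macaulay plus factorial.

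One cosmetic slip: in your exact sequence the unit terms are misarranged. Units pull back from $H\backslash G$ to $G$, and it is $\qq[G]^\times/\qq^\times=X^*(G)$ that maps to $X^*(H)$. Since only exactness at $\mathrm{Pic}(H\backslash G)$ is used, nothing breaks.
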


\begin{proof}
      As $X_P$ is normal spherical, it is Cohen-Macaulay by \cite[Corollary 2.3.4]{Geometryofspherical} and \cite[\href{https://stacks.math.columbia.edu/tag/045P}{Tag 045P}]{stacks-project}. As explained in the proofs of \cite[Lemma 3.2]{Gannon:GKconjecture} and \cite[Corollary 1.3]{FuLiu}, we have an injection of divisor class groups $\mathrm{Cl}(X_P)=\mathrm{Cl}(X_P^\circ)\hookrightarrow \mathrm{Cl}(G)$ by \cite[Theorem 18.32]{Milne:AGbook}. Since $G$ is simply connected, $\mathrm{Cl}(G)=0$ by \cite[Corollary 18.24]{Milne:AGbook}. Therefore $K[X_P]$ is a UFD for any field $K$ of characteristic zero. As $X_P$ is Cohen-Macaulay and factorial, $X_P$ is Gorenstein.
\end{proof}

Let $M_P$ be the Levi subgroup of $P$ containing $T$. Then we have a natural group action
\begin{align}\label{eq:action}
\begin{split}
     X_P^\circ \times M_P^{\mathrm{ab}}\times G &\longrightarrow X_P^\circ\\
    (x,m,g)&\longmapsto m^{-1}xg.
\end{split}
\end{align}
 Let $\omega_P$ be the fundamental weight corresponding to the simple root associated to $P$. Let $V_P$ be the right $G$-representation of (anti-dominant) highest weight $-\omega_P$, and fix a highest weight vector $v_P\in V_P(\qq)$. By \cite[Lemma 3.4]{Getz:Hsu:Leslie} we have a well-defined embedding
\begin{align*}
\mathrm{Pl}:=\mathrm{Pl}_{v_P}:X_P^\circ\lhook\joinrel\xrightarrow{\quad} V_P
\end{align*}
which extends to a closed immersion $X_P\hookrightarrow V_P$, and $X_P-X_P^\circ$ is a closed point mapped to the origin under $\mathrm{Pl}$ by \cite[Theorems 1 and 2]{Popov:Vinberg}.

Let $V_P^\lor$ be the dual representation of $V_P$ and $P^{\mathrm{op}}$ be the parabolic subgroup opposite to $P$. Let $v_{P^{\mathrm{op}}}^*\in V_P^{\lor}(\qq)$ be the lowest weight vector such that $\langle v_P,v_{P^{\mathrm{op}}}^*\rangle=1 $. Similarly, we have a  $G$-equivariant closed embedding $\mathrm{Pl}_{v^\ast_{P^{\mathrm{op}}}}: X_{P^{\mathrm{op}}}\lhook\joinrel\xrightarrow{\quad}V_P^\lor$. The restriction of the canonical pairing between $V_P$ and $V_P^\lor$ yields a pairing 
\begin{align*}
   \langle \cdot ,\cdot\rangle :X_P\times X_{P^{\mathrm{op}}}\longrightarrow \mathbb{G}_a 
\end{align*}
via the embeddings $\mathrm{Pl}_{v_P},\mathrm{Pl}_{v_{P^{\mathrm{op}}}^\ast}$. This pairing is independent of the choice of $v_P$.

For $j\in\zz_{\ge 0},$ let $V_{-j\omega_P}$ be the (absolutely) irreducible representation of $G$ with highest weight $-j\omega_P$. We have  $\qq[X_P]=\bigoplus_{j\ge 0} V_{-j\omega_P}^\lor(\qq)$ and $\qq[X_{P^{\mathrm{op}}}]=\bigoplus_{j\ge 0} V_{-j\omega_P}(\qq)$. Therefore, we have a natural grading on both $\qq[X_P]$ and $\qq[X_{P^{\mathrm{op}}}]$ by degree. Let
\begin{align*}
    x_1(x)&:=\langle x, v_{P^{\mathrm{op}}}^\ast\rangle,\quad x\in X_P(\qq),\\
    x_1^{\mathrm{op}}(x^\ast)&:=\langle v_P, x^\ast\rangle,\quad x^\ast\in X_{P^{\mathrm{op}}}(\qq).
\end{align*}
Throughout we fix a choice of $\qq$-basis $\mathcal{B}$ of $\qq[X_P]$ consisting of weight vectors so that $\mathcal{B}_j:=\mathcal{B}\cap V_{-j\omega_P}^\lor(\qq)$ contains $x_1^j$. Since $V_{-j\omega_P}^\lor$ is absolutely irreducible, by Schur's lemma
\begin{align*}
    \big(V_{-j\omega_P}^\lor(\qq)\otimes_\qq V_{-j\omega_P}(\qq)\big)^{G(\qq)}=\Hom_{G(\qq)}(V_{-j\omega_P}^\lor(\qq),V_{-j\omega_P}^\lor(\qq))=\qq\mathrm{Id}.
\end{align*}
For each $b$ in $\mathcal{B}_j$, let $b^{\mathrm{op}}$ be the unique element in $V_{-j\omega_P}(\qq)$ such that
\begin{align*}
    \sum_{b\in \mathcal{B}_j} b\otimes b^{\mathrm{op}}\in \big(V_{-j\omega_P}^\lor(\qq)\otimes_\qq V_{-j\omega_P}(\qq)\big)^{G(\qq)}
\end{align*}
and $(x_1^j)^\mathrm{op}=(x_1^{\mathrm{op}})^j$. Write $\mathcal{B}_1=\{x_1,x_2,\ldots, x_{\dim V_P}\}.$

Let $\mathcal{S}(X_P(\rr))$ be the Schwartz space of $X_P(\rr)$ in \cite{Getz:Hsu:Leslie, Hsu:asymptotics}. Let $\psi(x):=e^{2\pi i x}$ be a nontrivial additive character of $\rr$. There is a unitary Fourier transform
\begin{align*}
   \mathcal{F}_{P|P^{\mathrm{op}}}=\mathcal{F}_{P|P^{\mathrm{op}},\psi}: \mathcal{S}(X_P(\rr))\longrightarrow \mathcal{S}(X_{P^{\mathrm{op}}}(\rr))
\end{align*}
such that $\mathcal{F}_{P^{\mathrm{op}}|P}\circ\mathcal{F}_{P|P^{\mathrm{op}}}=\mathrm{Id}.$  Let $\Lambda=\{(s_i,\lambda_i)\}_{i=1}^k$ be the multiset attached to $(G,P)$ as explained in \cite[\S 3.2]{Hsu:asymptotics}. For each $i,$ $(s_i,\lambda_i)\in \frac{1}{2}\zz_{\ge 0}\times \zz_{>0}.$ We fix a choice of good ordering on $\Lambda,$ i.e., $\lambda_i s_{i+1}\ge s_i\lambda_{i+1}.$ The highest data $(s_k,\lambda_k)$ is unique in the sense that $\lambda_k=1$ and $\lambda_i s_k>s_i$ for any $i<k$. Let
\begin{align*}
    J(s):=\prod_{i=1}^k \lambda_i^{\lambda_i}\prod_{\ell=0}^{\lambda_i-1}\left(s+s_k-\frac{s_i}{\lambda_i} +\frac{\ell}{\lambda_i}\right)\in s\zz[s],\quad d_P:=\sum_{i=1}^k \lambda_i.
\end{align*}

Suppose $\mathcal{S}(X_P(\rr))$ is stable under function multiplication by $\cc[X_P].$ This condition is satisfied whenever $G$ is not of type $E$ or type $F$ \cite[Proposition 7.2]{Hsu:asymptotics}. Then we have well-defined operators
\begin{align*}
    \partial_i:=(2\pi\sqrt{-1})^{d_P}\mathcal{F}_{P^{\mathrm{op}}|P}\circ x_i^{\mathrm{op}}\circ\mathcal{F}_{P|P^{\mathrm{op}}},\quad \partial_i^{\mathrm{op}}:=(2\pi\sqrt{-1})^{d_P}\mathcal{F}_{P|P^{\mathrm{op}}}\circ x_i\circ\mathcal{F}_{P^{\mathrm{op}}|P}.
\end{align*}
By \cite[Theorem 7.8]{Hsu:asymptotics} each $\partial_i$ can be realized as a differetial operator on $X_P^\circ(\rr).$ Explicitly, 
\begin{align}\label{eq:realization}
    \partial_1=c_1\frac{\partial}{\partial x_1}+\sum_{\ell=2}^{d_P} \bigg( c_\ell x_1^{\ell-1}\frac{\partial^\ell}{\partial x_1^\ell}+\sum_{\substack{\underline{j}\neq (\ell,0,\ldots,0)\\|\underline{j}|=\ell} } p_{\underline{j}} \frac{\partial^\ell}{\partial \mathbf{x}^{\underline{j}}}\bigg)
\end{align}
where $p_{\underline{j}}\in \qq[X_P]$, if nonzero, is a homogeneous polynomial of degree $|\underline{j}|-1$, and 
\begin{align*}
    c_\ell=\frac{1}{\ell!}\sum_{j=0}^{\ell} (-1)^{\ell-j}\binom{\ell}{j}J(j)\in \qq_{>0}.
\end{align*}
For a $\qq$-algebra $R$, we define the \textbf{Weyl algebra} $W_{X_P}(R)$ to be the $R$-algebra generated by $\{x_i,\partial_i\}_{i=1}^{\mathrm{dim} V_P}$. When $X_P$ is a vector space, this is the classical Weyl algebra. 

Initially, the operators $\partial_i$ are only defined as (injective) endomorphisms of $\mathcal{S}(X_P(\rr))$. In particular, in the notation of \cite{Hsu:asymptotics}, one has $W_{X_P}(\cc)=W_{X_P(\rr)}\subseteq \mathrm{End}_\cc(\mathcal{S}(X_P(\rr)))$. Thanks to \eqref{eq:realization}, $K[X_P]$ is a faithful simple left $W_{X_P}(K)$-module. Hence we may regard $W_{X_P}(K)\subseteq \mathrm{End}_K(K[X_P])$. For further details, see \cite[\S 7.2]{Hsu:asymptotics}. We record several facts that will be used repeatedly without further mention.

\begin{Lem}
    \begin{enumerate}
        \item Let $n\in \zz_{\ge 0}.$ We have $\partial_1(x_1^n)=J(n)x_1^{n-1}$ and $\partial_i(x_1^n)=0$ for all $i\neq 1.$
        \item For each nonconstant $f\in K[X_P],$ $\partial_i(f)\neq 0$ for some $i$.
        \item Let $r\ge 1$. For each $b\in \mathcal{B}_r,$ there exists (unique) $f\in V_{-r\omega_P}^\lor(\qq)$ such that $b^{\mathrm{op}}(\partial)(f)=1$ and $b'^{\mathrm{op}}(f)=0$ for all $b\neq b'\in \mathcal{B}_r$
    \end{enumerate}
\end{Lem}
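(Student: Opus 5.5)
We prove the three parts using the realization \eqref{eq:realization} for (1), Fourier-transform duality with $P^{\mathrm{op}}$ for (2), and a $G$-invariant pairing for (3).

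\emph{Part (1).} Apply \eqref{eq:realization} directly to $x_1^n$. Every term containing a derivative in some $x_j$ with $j\neq 1$ kills $x_1^n$, because $x_1^n$ is a polynomial in $x_1$ alone in the chosen coordinates. What remains is
\begin{align*}
\Big(c_1\frac{\partial}{\partial x_1}+\sum_{\ell\ge 2}c_\ell x_1^{\ell-1}\frac{\partial^\ell}{\partial x_1^\ell}\Big)x_1^n=\Big(\sum_{\ell\ge1}c_\ell\,\ell!\binom{n}{\ell}\Big)x_1^{n-1}.
\end{align*}
By the definition of $c_\ell$, the scalar $\ell!\,c_\ell$ is the $\ell$-th forward difference $\Delta^\ell J(0)$. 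Newton's forward difference formula then gives $\sum_{\ell}\binom{n}{\ell}\Delta^\ell J(0)=J(n)$, where the $\ell=0$ term vanishes because $J(0)=0$ (recall $J\in s\zz[s]$). One caveat: the sum in \eqref{eq:realization} stops at $\ell=d_P$, but $\deg J=d_P$, so $\Delta^\ell J=0$ for $\ell>d_P$ and nothing is lost.

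For $i\neq 1$ we cannot use \eqref{eq:realization}, since it is written only for $\partial_1$. Instead, use weights and equivariance: $\partial_i$ is obtained from $\partial_1$ by the $G$-equivariant identification $x_i^{\mathrm{op}}\leftrightarrow\partial_i$, so it shifts $T$-weights by the weight of $x_i^{\mathrm{op}}$. Also $\partial_i(x_1^n)$ lies in $V_{-(n-1)\omega_P}^\lor$. The vector $x_1^n$ is an extremal weight vector, and only $x_1^{n-1}$ has the matching extremal weight; for $i\neq1$ the shifted weight is not a weight of $V^\lor_{-(n-1)\omega_P}$, so $\partial_i(x_1^n)=0$.

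\emph{Part (2).} By homogeneity, reduce to $f\in V^\lor_{-j\omega_P}$ with $j\ge1$, since each $\partial_i$ lowers degree by one. Suppose $\partial_i(f)=0$ for all $i$. The set of such $f$ in degree $j$ is $G$-stable, because the span of the $\partial_i$ is $G$-stable (it is the Fourier conjugate of $V_P=\operatorname{span}\{x_i^{\mathrm{op}}\}$). This set is therefore either $0$ or all of $V^\lor_{-j\omega_P}$, by irreducibility. The second option is excluded by (1), since $\partial_1(x_1^j)=J(j)x_1^{j-1}\neq0$: we have $J(j)\neq0$ for $j\ge1$ because every factor $s+s_k-s_i/\lambda_i+\ell/\lambda_i$ is positive, using $\lambda_is_k>s_i$ for $i<k$ together with the $i=k$ factor $s$.

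\emph{Part (3).} Iterate: $b^{\mathrm{op}}(\partial)$ maps $V^\lor_{-r\omega_P}$ to constants. This gives a bilinear pairing $V_{-r\omega_P}\times V^\lor_{-r\omega_P}\to K$, $(h,f)\mapsto h(\partial)(f)$, which is $G$-invariant. It is nonzero, since $(x_1^{\mathrm{op}})^r(\partial)(x_1^r)=\prod_{n=1}^r J(n)\neq0$. Both spaces are irreducible, so the pairing is nondegenerate and hence perfect. Take $f$ to be the dual-basis vector for $\{b'^{\mathrm{op}}\}$. The condition ``$b'^{\mathrm{op}}(f)=0$'' should be read as $b'^{\mathrm{op}}(\partial)(f)=0$. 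Uniqueness follows from nondegeneracy.

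The main obstacle is the $G$-equivariance of $x^{\mathrm{op}}\mapsto\partial$, which parts (1) for $i\neq1$, (2) and (3) all use. I would take it from the equivariance of $\mathcal{F}_{P|P^{\mathrm{op}}}$ in \cite{Hsu:asymptotics}.
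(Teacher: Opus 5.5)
Your proof is correct and follows the paper's argument. Part (2) is the same $G$-stability plus irreducibility argument, and you usefully make explicit that $J(j)>0$ for $j\ge 1$. Part (3) uses the same $G$-invariant perfect pairing, except that you take nonvanishing from $\partial_1^r(x_1^r)=\prod_{n=1}^r J(n)$ rather than from (2). For (1) the paper only cites the computation in \cite{Hsu:asymptotics} together with highest weight theory. Your derivation of $\partial_1(x_1^n)=J(n)x_1^{n-1}$ from \eqref{eq:realization} via Newton's forward-difference formula, and your extremal-weight argument for $i\neq 1$, are a valid self-contained substitute given that formula.
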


\begin{proof}
    Assertion (1) follows from the computation after  Lemma 7.7 and the proof of Theorem 7.8 in \cite[\S 7.2]{Hsu:asymptotics} using highest weight theory. For (2) we may assume $f$ is homogeneous of degree $n\ge 1$. Suppose on the contrary $\partial_i(f)=0$ for all $i$. Since $V^{\lor}_{-n\omega_P}(K)$ is irreducible, by $G(K)$-action we have $\partial_i(f)=0$ for all $f\in V^{\lor}_{-n\omega_P}(K)$ and $i,$ contradicting (1). Assertion (3) follows from (2) and the fact that the natural pairing between $V_{-r\omega_P}$ and $V^\lor_{-r\omega_P}$ is perfect and $G$-equivariant.
\end{proof}

In the rest of the paper, for ease of notation we will often drop $K$ and write $W_{X_P}=W_{X_P}(K),$ $G=G(K),$ and so on, for any field $K$ of characteristic zero when there is no risk of confusion. 

\begin{Lem}\label{eq:Fourierauto}
We have a $G$-equivariant isomorphism
\begin{align*}
    \mathcal{F}_{P|P^{\mathrm{op}}}: W_{X_P}&\longrightarrow W_{X_{P^{\mathrm{op}}}}\\
    x_i&\mapsto \partial_i^{\mathrm{op}}\\
    \partial_i &\mapsto x_i^{\mathrm{op}}.
\end{align*}\qed
\end{Lem}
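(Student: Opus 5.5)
The plan is to define the map by conjugation with the Fourier transform and then check, one at a time, that it is well defined, sends generators to generators, is bijective, and is $G$-equivariant.

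\textbf{Definition over $\cc$.} Work first with $K=\cc$ and the realization $W_{X_P}(\cc)\subseteq \mathrm{End}_\cc(\mathcal{S}(X_P(\rr)))$. Up to a scalar normalization, set
\begin{align*}
D\longmapsto \mathcal{F}_{P|P^{\mathrm{op}}}\circ D\circ \mathcal{F}_{P^{\mathrm{op}}|P}.
\end{align*}
Since $\mathcal{F}_{P^{\mathrm{op}}|P}\circ\mathcal{F}_{P|P^{\mathrm{op}}}=\mathrm{Id}$, and symmetrically with the roles of $P$ and $P^{\mathrm{op}}$ exchanged, this conjugation is an algebra isomorphism
\begin{align*}
\mathrm{End}(\mathcal{S}(X_P(\rr)))\longrightarrow \mathrm{End}(\mathcal{S}(X_{P^{\mathrm{op}}}(\rr))).
\end{align*}
Its inverse is conjugation by $\mathcal{F}_{P^{\mathrm{op}}|P}$.

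\textbf{Images of the generators.} By the definition of $\partial_i^{\mathrm{op}}$, conjugation sends $x_i$ to $(2\pi\sqrt{-1})^{-d_P}\partial_i^{\mathrm{op}}$. By the definition of $\partial_i$, it sends $\partial_i$ to $(2\pi\sqrt{-1})^{d_P}x_i^{\mathrm{op}}$. To remove these constants, compose with the automorphism of $W_{X_{P^{\mathrm{op}}}}$ fixing $\partial^{\mathrm{op}}$ and scaling $x^{\mathrm{op}}$ by $c^{-1}$, where $c=(2\pi\sqrt{-1})^{d_P}$. This is the step I expect to need care. Because $\partial^{\mathrm{op}}_i$ is built from $x_i^{\mathrm{op}}$ via Fourier transform, I would realize the rescaling geometrically. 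The plan is to use the grading on $\cc[X_{P^{\mathrm{op}}}]$ and conjugate by the degree-scaling operator $f\mapsto t^{\deg f}f$ for a suitable $t$. By \eqref{eq:realization}, $\partial_i^{\mathrm{op}}$ has degree $-1$ with respect to this grading, so this conjugation scales $x_i^{\mathrm{op}}$ by $t$ and $\partial_i^{\mathrm{op}}$ by $t^{-1}$. Combining with the constants above, the required condition is $t\,c^{-1}\cdot$(unchanged) for one and $t^{-1}c$ for the other, so taking $t=c^{-1}$ sends $x_i\mapsto\partial_i^{\mathrm{op}}$ and $\partial_i\mapsto x_i^{\mathrm{op}}$ exactly. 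Alternatively, one could choose the character $\psi$ appropriately. Since $W_{X_P}$ and $W_{X_{P^{\mathrm{op}}}}$ are generated by these elements, the image of $W_{X_P}$ is exactly $W_{X_{P^{\mathrm{op}}}}$. The inverse map, built the same way from $\mathcal{F}_{P^{\mathrm{op}}|P}$, is a two-sided inverse.

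\textbf{Descent to $\qq$ and then to $K$.} Both algebras are defined over $\qq$ as the $\qq$-spans of words in the generators. The map sends rational generators to rational generators, so it restricts to $\qq$-forms as an isomorphism, because it is injective and surjective on generators. Base change then gives the statement over any $K$. To make this rigorous, one checks that the word relations hold over $\qq$ as operators on $K[X_P]$, which follows from faithfulness of $K[X_P]$ and the fact that relations over $\cc$ descend.

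\textbf{$G$-equivariance.} $G$ acts on $W_{X_P}$ by conjugation via right translation. The transform $\mathcal{F}_{P|P^{\mathrm{op}}}$ is $G$-equivariant, so conjugating by it commutes with the $G$-action. The degree-scaling operator commutes with $G$ as well, since $G$ preserves each graded piece $V_{-j\omega_P}$.
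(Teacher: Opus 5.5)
Your proposal is correct and is the argument the paper leaves implicit (the lemma is stated without proof): conjugate by $\mathcal{F}_{P|P^{\mathrm{op}}}$ using $\mathcal{F}_{P^{\mathrm{op}}|P}\circ\mathcal{F}_{P|P^{\mathrm{op}}}=\mathrm{Id}$ (and its version with $P,P^{\mathrm{op}}$ swapped) and the definitions of $\partial_i,\partial_i^{\mathrm{op}}$, then descend to $\qq$ and base change to $K$. Your degree-scaling by $t=(2\pi\sqrt{-1})^{-d_P}$ on $\cc[X_{P^{\mathrm{op}}}]$ correctly removes the factors $(2\pi\sqrt{-1})^{\mp d_P}$ that plain conjugation produces, which the paper does not address; only drop the aside about choosing $\psi$ instead, since rescaling the real character $\psi$ changes these constants only by real factors and so cannot absorb $(2\pi\sqrt{-1})^{d_P}$ when $d_P$ is odd.
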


\section{Basic properties of Weyl algebras}\label{sec:basic}

In this section, we establish fundamental properties of the algebra $W_{X_P}$. Recall that 
\begin{align*}
    \partial_i=(2\pi\sqrt{-1})^{d_P}\mathcal{F}_{P^{\mathrm{op}}|P}\circ x_i^{\mathrm{op}}\circ\mathcal{F}_{P|P^{\mathrm{op}}},
\end{align*}
so we will often identify $\partial_i$ as coordinate functions in $\qq[X_{P^{\mathrm{op}}}].$

\begin{Lem}\label{lem:orderadd}
Let $r\in \zz_{\ge 0}$. We have $\mathrm{ord}(f(\partial))=rd_P$ for all $0\neq f\in V_{-r\omega_P}$.
\end{Lem}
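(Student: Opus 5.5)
We will show two inequalities: $\mathrm{ord}(f(\partial))\le rd_P$ for every $f\in V_{-r\omega_P}$, and $\mathrm{ord}(f(\partial))\ge rd_P$ when $f\neq 0$. The main tools are $G$-equivariance and the fact that $V_{-r\omega_P}$ is irreducible.

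\emph{Upper bound.} By \eqref{eq:realization}, $\partial_1$ is a differential operator of order at most $d_P$, and its coefficient of $\partial^{d_P}/\partial x_1^{d_P}$ is $c_{d_P}x_1^{d_P-1}$, which is nonzero. The $G$-action on $\mathcal{D}(X_P)$ preserves the order filtration. The map $\partial_i\mapsto$ (coordinate function on $X_{P^{\mathrm{op}}}$) is $G$-equivariant by Lemma \ref{eq:Fourierauto}, and $\mathcal{B}_1^{\mathrm{op}}$ spans the irreducible representation $V_P$. Hence every $\partial_i$ is a linear combination of $G$-translates of $\partial_1$, and so $\mathrm{ord}(\partial_i)\le d_P$. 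Since $\mathcal{D}(X_P)$ is a filtered ring, each monomial of degree $r$ in the $\partial_i$ has order at most $rd_P$. The space $V_{-r\omega_P}$ is a quotient of $\mathrm{Sym}^r V_P$, realized as the degree-$r$ part of $K[X_{P^{\mathrm{op}}}]$, so $f(\partial)$ is a sum of such monomials. This gives $\mathrm{ord}(f(\partial))\le rd_P$.

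\emph{Lower bound.} The set of $f\in V_{-r\omega_P}$ with $\mathrm{ord}(f(\partial))<rd_P$ is a $G$-stable subspace, because order is $G$-invariant and $g\cdot f(\partial)=(g f)(\partial)$. By irreducibility it is either $0$ or everything. It is therefore enough to exhibit one $f$ with order exactly $rd_P$, and we take $f=(x_1^{\mathrm{op}})^r$, i.e.\ $\partial_1^r$. We compute its principal symbol on the open dense set $X_P^\circ$, which is smooth, so symbols multiply in $K[T^*X_P^\circ]$. The symbol of order $d_P$ of $\partial_1$ is
\begin{align*}
\sigma(\partial_1)=c_{d_P}x_1^{d_P-1}\xi_1^{d_P}+\sum_{\underline{j}\neq(d_P,0,\ldots,0),\,|\underline{j}|=d_P} p_{\underline{j}}\,\xi^{\underline{j}},
\end{align*}
with the $\xi$'s being symbols of the ambient coordinate vector fields. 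We must make sure this is a genuinely nonzero function on $T^*X_P^\circ$ and not merely a nonzero expression in ambient coordinates.

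This nonvanishing is the main obstacle, since $X_P\subset V_P$ is singular and the ambient $\partial/\partial x_i$ are not intrinsic. The first attempt is direct: choose a point of $X_P^\circ$ with $x_1\neq 0$ where $x_1$ has nonzero differential. Such a point exists because $x_1$ is a nonconstant homogeneous function. Pair against the covector $dx_1$. Then $\sigma(\partial_1)(dx_1)$ should be computed from the action on $x_1^n$. By Lemma (1), $\partial_1(x_1^n)=J(n)x_1^{n-1}$, and $J$ has degree $d_P$ in $n$ with leading coefficient $\prod\lambda_i^{\lambda_i}\cdot\prod\lambda_i^{-\lambda_i}=1$. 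So $\partial_1$ acts on powers of $x_1$ with growth of exact order $n^{d_P}$, which a differential operator of order below $d_P$ cannot produce. Concretely, $x_1^{-n+1}\partial_1(x_1^n)$ is a polynomial in $n$ of degree $d_P$. An operator $D$ of order $m$ satisfies that $x_1^{-n+1}D(x_1^n)$ is polynomial in $n$ of degree at most $m$; this follows by induction via $[D,x_1]$. Now iterate: $\partial_1^r(x_1^n)=J(n)J(n-1)\cdots J(n-r+1)x_1^{n-r}$, which is a polynomial in $n$ of degree $rd_P$. Hence $\mathrm{ord}(\partial_1^r)\ge rd_P$. This degree-in-$n$ argument avoids symbols entirely, so it is the route I would commit to. The remaining check is the inductive bound, namely that for $\mathrm{ord}(D)\le m$ the function $n\mapsto x_1^{r-n}D(x_1^n)$ is polynomial of degree at most $m$ in $n$. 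This follows because the $m$-fold iterated commutators with $x_1$ vanish, which gives a finite-difference identity in $n$.
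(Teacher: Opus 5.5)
Your proof is correct and follows the paper's route. The upper bound comes from \eqref{eq:realization}, and $G$-equivariance together with the irreducibility of $V_{-r\omega_P}$ (which the paper phrases as injectivity of $V_{-r\omega_P}\to W_{rd_P}/W_{rd_P-1}$) reduces everything to $\mathrm{ord}(\partial_1^r)=rd_P$. The paper simply asserts that fact, and you justify it cleanly: $x_1^{-n}\partial_1^r(x_1^n)=Z_r(n)x_1^{-r}$ has degree $rd_P$ in $n$, while the commutator-with-$x_1$ recursion bounds this degree by the order. One small slip is that the leading coefficient of $J$ is $\prod_i\lambda_i^{\lambda_i}$, not $1$, but you only use that it is nonzero.
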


\begin{proof}
      For $j\ge 0,$ let $W_j$ be the subspace in $W_{X_P}$ consisting of operators of order at most $j$. By \eqref{eq:realization} the inclusion $\qq[X_{P^{\mathrm{op}}}]\longrightarrow W_{X_P}$ induces a $G$-equivariant map 
     \begin{align*}
         V_{-r\omega_P}&\longrightarrow W_{rd_P}/W_{rd_P-1}.
     \end{align*}
     Since $\mathrm{ord} (\partial_1^r)=rd_P$ and  $V_{-r\omega_P}$ is irreducible, this map is injective, and the lemma follows.
\end{proof}

\begin{Lem}\label{lem:naivebase}
The set
\begin{align*}
    \big\{ b_1(\mathbf{x})b_2^{\mathrm{op}}(\mathbf{\partial})\in W_{X_P}: b_1,b_2\in \mathcal{B}\big\}
\end{align*}
is linearly independent over $K$.
\end{Lem}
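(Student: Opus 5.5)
Suppose a nontrivial finite relation
\begin{align*}
\sum_{b_1,b_2\in\mathcal{B}} c_{b_1,b_2}\, b_1(\mathbf{x})\, b_2^{\mathrm{op}}(\partial)=0,\qquad c_{b_1,b_2}\in K,
\end{align*}
holds in $W_{X_P}\subseteq \mathrm{End}_K(K[X_P])$. Regroup it as $\sum_{b_2} f_{b_2}(\mathbf{x})\, b_2^{\mathrm{op}}(\partial)=0$ with $f_{b_2}:=\sum_{b_1}c_{b_1,b_2}b_1\in K[X_P]$. Since the $b_1$ are linearly independent in $K[X_P]$, it suffices to show that every $f_{b_2}$ vanishes. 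The plan is a triangularity argument that tests the operator on suitable functions, using part (3) of the preceding lemma, exactly as one proves independence of $x^\alpha\partial^\beta$ in the classical Weyl algebra.

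First, I record the degree behaviour. Take $b_2\in\mathcal{B}_r$, so $b_2^{\mathrm{op}}\in V_{-r\omega_P}$. By $G$-equivariance and the homogeneity in \eqref{eq:realization} (each $\partial_i$ lowers degree by one), $b_2^{\mathrm{op}}(\partial)$ maps $V^\lor_{-m\omega_P}$ into $V^\lor_{-(m-r)\omega_P}$, which is zero for $m<r$. In particular $b_2^{\mathrm{op}}(\partial)$ kills all polynomials of degree $<r$. On $V^\lor_{-r\omega_P}$ it acts as a linear functional, since the image is a constant. By part (3) of the lemma, I can choose $g_{b_2}\in V^\lor_{-r\omega_P}$ with $b_2^{\mathrm{op}}(\partial)(g_{b_2})=1$ and $b'^{\mathrm{op}}(\partial)(g_{b_2})=0$ for $b'\in\mathcal{B}_r\setminus\{b_2\}$. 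That is, these functionals form a dual basis; reading part (3) in this way is the mild interpretive step.

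Next comes the induction on $r$. Let $r_0$ be the minimal $r$ for which some $f_{b_2}$ with $b_2\in\mathcal{B}_r$ is nonzero. Apply the relation to $g_{b}$ for $b\in\mathcal{B}_{r_0}$. Terms with $b_2\in\mathcal{B}_r$ for $r>r_0$ kill $g_b$ by degree. Terms with $r<r_0$ have $f_{b_2}=0$. Among the terms with $r=r_0$, only $b_2=b$ survives, and it contributes $f_b\cdot 1$. Hence $f_b=0$ for all $b\in\mathcal{B}_{r_0}$, which contradicts the choice of $r_0$. Therefore all $f_{b_2}=0$, and so all $c_{b_1,b_2}=0$.

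The main obstacle is justifying the dual-basis property of part (3) over $K$ rather than $\qq$. Since all the data are defined over $\qq$ and the pairing $V_{-r\omega_P}\times V^\lor_{-r\omega_P}\to K$, $(h,g)\mapsto h(\partial)(g)$, is $G$-invariant and nonzero by part (2), Schur's lemma makes it a nonzero multiple of the perfect pairing. The statement therefore base-changes, and the argument goes through.
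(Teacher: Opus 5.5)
Your proof is correct and is essentially the paper's argument: take the minimal degree among the $b_2$ that occur, evaluate the relation on the dual element from part (3) of the preceding lemma, and note that the higher-degree terms vanish on it for degree reasons. Your regrouping into $\sum_{b_2} f_{b_2}(\mathbf{x})\,b_2^{\mathrm{op}}(\partial)$ also handles explicitly the case of several terms sharing the same $b_2$, which the paper's $\delta_{jj_0}$ formulation passes over.
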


\begin{proof}
Suppose on the contrary that there is nontrivial linear relation over $K$
\begin{align*}
    \sum_{j\in J} c_jb_{1j}(\mathbf{x})b_{2j}^{\mathrm{op}}(\mathbf{\partial})=0.
\end{align*}
Choose $J$ to be minimal.  Since $K[X_P]$ is a faithful $W_{X_P}$-module, the identity is the same as
\begin{align*}
    \sum_{j\in J} c_jb_{1j}(\mathbf{x})b_{2j}^{\mathrm{op}}(\mathbf{\partial})(f)=0
\end{align*}
for all $f\in K[X_P].$ Choose $j_0\in J$ such that $\deg b_{2j_0}^{\mathrm{op}}\le \deg b_{2j}^{\mathrm{op}}$ for all $j\in J$. Choose $f\in V_{-j_0\omega_P}^\lor$ such that $b_{2j}^{\mathrm{op}}(\mathbf{\partial})(f)=\delta_{jj_0}.$ This implies $c_{j_0}b_{1j_0}=0$ and thus $c_{j_0}=0$, a contradiction. 
\end{proof}

\begin{Prop}
\begin{enumerate}
    \item $W_{X_P}$ is a domain.
    \item The group of units in $W_{X_P}$ is $K^\times$.
    \item The center of $W_{X_P}$ is the field $K$.
\end{enumerate}
\end{Prop}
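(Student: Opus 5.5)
The plan is to embed $W_{X_P}$ into the ring of differential operators on the smooth variety $X_P^\circ$, where the order filtration behaves well. Then (1) and (2) follow from multiplicativity of principal symbols, and (3) follows from a direct computation with the $\partial_i$.

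\textbf{Step 0 (embedding).} By \eqref{eq:realization}, every element of $W_{X_P}$ is a differential operator on $K[X_P]$. Since $X_P - X_P^\circ$ has codimension at least $2$ and $X_P$ is normal, we have $K[X_P]=K[X_P^\circ]$. Hence each $D\in W_{X_P}$ restricts to a differential operator on $X_P^\circ$, and this restriction is injective because $K[X_P]$ is a faithful $W_{X_P}$-module. So we get $W_{X_P}\subseteq \mathcal{D}(X_P^\circ)$, compatibly with the order filtration. The variety $X_P^\circ=P^{\mathrm{der}}\backslash G$ is smooth, and it is irreducible because $G$ is connected. Therefore $\mathrm{gr}\,\mathcal{D}(X_P^\circ)$ embeds in the functions on $T^\ast X_P^\circ$, which form a domain. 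Concretely, locally on an affine open cover $\mathcal{D}$ has a basis of monomials in vector fields, and principal symbols multiply. The consequence is that $\mathrm{ord}(DE)=\mathrm{ord}(D)+\mathrm{ord}(E)$ for nonzero $D,E$. I expect this step to be the only real obstacle. One must check that the symbols of nonzero operators of a given order are nonzero and multiply, even though $X_P^\circ$ need not be affine. I would handle this by restricting to an affine open $U\subset X_P^\circ$. The restriction $\mathcal{D}(X_P^\circ)\to\mathcal{D}(U)$ is injective since $X_P^\circ$ is irreducible, and the order of an operator does not drop on a dense open subset.

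\textbf{Step 1 (domain).} If $D,E\neq 0$, then $DE$ has a nonzero principal symbol of order $\mathrm{ord}(D)+\mathrm{ord}(E)$. In particular $DE\neq 0$, so $W_{X_P}$ is a domain.

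\textbf{Step 2 (units).} If $DE=1$, additivity of order gives $\mathrm{ord}(D)=\mathrm{ord}(E)=0$. Operators of order $0$ commute with all functions, so they are multiplication by an element of $K[X_P^\circ]=K[X_P]$. Thus $D$ is a unit of the graded ring $K[X_P]=\bigoplus_{j\ge 0}V^\lor_{-j\omega_P}$, whose degree-$0$ part is $K$. Since this ring is a graded domain, its units lie in degree $0$, so $D\in K^\times$.

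\textbf{Step 3 (center).} Let $z$ be central. Then $[z,x_i]=0$ for all $i$, and since the $x_i$ generate $K[X_P]$, $z$ commutes with every $f\in K[X_P]$. Hence $z\in\mathcal{D}_0$, i.e. $z$ is multiplication by some $g\in K[X_P]$. Next, $[\partial_i,z]=0$ applied to the function $1$ gives $\partial_i(g)-g\,\partial_i(1)=0$. By the Lemma above, $\partial_1(1)=J(0)\cdot x_1^{-1}$-term $=0$ because $J\in s\zz[s]$, and $\partial_i(1)=0$ for $i\neq 1$. So $\partial_i(g)=0$ for all $i$. By part (2) of that Lemma, $g$ is constant, so $z\in K$. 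Conversely $K$ is obviously central, which proves (3).
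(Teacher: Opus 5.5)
Your proof is correct, and it takes a different route from the paper's. The paper works entirely through representatives in the ambient Weyl algebra $W_{V_P}$: for nonzero $D$ it fixes $\sum_{\lambda} p_{\lambda,D}(\mathbf{x})\,\partial^{|\lambda|}/\partial\mathbf{x}^{\lambda}$ with all $p_{\lambda,D}\neq 0$ in $K[X_P]$, and then argues with extremal multi-indices.

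\begin{enumerate}
\item For (1) it uses that the left primitive ring $W_{X_P}$ is prime, so it suffices to rule out $D^2=0$. This is done through the coefficient $p_{2\lambda,D^2}=p_{\lambda,D}^2$ for a suitably extremal $\lambda$ of maximal length.
\item For (2) it multiplies the top coefficients of $D$ and $D'$.
\item For (3) it takes $\lambda$ of minimal length, lowers it by bracketing with some $x_i$, and finishes with $\partial_i(p_0)\neq 0$.
\end{enumerate}

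You instead use the intrinsic symbol calculus on the smooth irreducible $X_P^\circ$, so that $\mathrm{gr}$ embeds in the domain $K[T^\ast X_P^\circ]$ and order is additive. In (3) you get order $0$ directly from the definition of $\mathcal{D}_0$ once $z$ commutes with the generators $x_i$, which is slicker than the minimal-multi-index argument. Your ``only real obstacle'' disappears if you restrict to $U=\{x_1\neq 0\}\subset X_P^\circ$: the map $\mathcal{D}(X_P)\hookrightarrow\mathcal{D}(U)$ is injective and order-preserving, and $U$ is smooth, affine and irreducible.

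Your route buys conceptual clarity.
\begin{itemize}
\item It shows at once that all of $\mathcal{D}(X_P)$ is a domain with additive order, independently of Theorem~\ref{thm:Weyl=Diff}.
\item It anticipates the embedding $\mathrm{gr}\,W_{X_P}\hookrightarrow K[T^\ast X_P^\circ]$ used later in Theorem~\ref{thm:filtration}.
\item It avoids two points the paper leaves implicit. First, the classes of the $p_{\lambda,D}$ in $K[X_P]$ must not depend on the chosen representative. This is true, since an element of $W_{V_P}$ inducing zero on $K[X_P]$ has all coefficients in the ideal of $X_P$. Second, in (2) the case where $D$ and $D'$ both have order $0$ needs $K[X_P]^\times=K^\times$. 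You supply this through the grading; the paper's ``both nonzero $\lambda,\lambda'$'' step passes over it.
\end{itemize}

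The paper's route, in exchange, is elementary and computational. It needs only \eqref{eq:realization} and the fact that $K[X_P]$ is a domain, with no input about the sheaf of differential operators on the non-affine $X_P^\circ$ or its associated graded.
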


\begin{proof}
For $D\in W_{X_P}$ nonzero, choose a representative
\begin{align*}
    D=\sum_{\lambda\in I_D} p_{\lambda,D}(\mathbf{x}) \frac{\partial^{|\lambda|}}{\partial \mathbf{x}^\lambda}
\end{align*}
in the classical Weyl algebra $W_{V_P}$, where $I_D\subset \zz_{\ge 0}^{\dim V_P}$ is a (nonempty) finite subset and $p_{\lambda,D }\neq 0$ in $K[X_P]$ for all $\lambda\in I_D$.

For (1), as $W_{X_P}$ is a left primitive ring and hence a prime ring, it suffices to show $W_{X_P}$ is reduced. Suppose on the contrary that $D^2=0$ in $W_{X_P}$. Let
\begin{align*}
    \sum_{\lambda\in I} p_{\lambda,D^2}(\mathbf{x})\frac{\partial^{|\lambda|}}{\partial \mathbf{x}^\lambda}
\end{align*}
be the square of the representative of $D$ in $W_{V_P}$. Then $p_{\lambda,D^2}=0$ in $K[X_P]$. On the other hand, there is $\lambda\in I_D$ such that $|\lambda|$ is maximal, $2\lambda\in I$, and $2\lambda\neq \lambda_1+\lambda_2$ for any $\lambda_1,\lambda_2\in I_D$ unless $\lambda_1=\lambda_2=\lambda$.
Then, $p_{2\lambda, D^2}=p_{\lambda,D}^2\neq 0$ in $K[X_P]$, which is a contradiction.


For (2), suppose $D\not\in K^\times$ and there is $D'\in W_{X_P}$ such that $DD'=1$. Then $D'\not\in K^\times.$ Let
\begin{align*}
    \sum_{\lambda\in I} p_{\lambda,DD'}(\mathbf{x})\frac{\partial^{|\lambda|}}{\partial \mathbf{x}^\lambda}
\end{align*}
be the multiplication of the representatives of $D$ and $D'$ in $W_{V_P}$, so $p_{0,DD'}=1$ and  $p_{\lambda,DD'}$ are zero in $K[X_P]$ for $\lambda\neq 0$. However, there are $\lambda\in I_D,\lambda'\in I_{D'}$ both nonzero such that $\lambda+\lambda'\in I$ and   $p_{\lambda+\lambda',DD'}=p_{\lambda,D}p_{\lambda',D'}\neq 0$
is nonzero in $K[X_P]$. Therefore, such $D'$ does not exist. 

Finally for (3) suppose $D$ lies in the center of $W_{X_P}$. Let $\lambda\in I_D$ with $|\lambda|$ minimal. Assume first $|\lambda|>0$. Choose $i$ such that $\lambda_i\neq 0$. Let $e_i\in \zz_{\ge 0}^{\mathrm{dim}V_P}$ be the vector whose $i$th entry is $1$ and zero otherwise. Then $p_{\lambda-e_i,[D,x_i]}=\lambda_i p_{\lambda,D}\neq 0$
in $K[X_P]$, contradicting the assumption that $[D,x_i]=0$. Suppose $\lambda=0$ and $p_0$ is nonconstant. Then there is $i$ such that $\partial_i(p_0)\neq 0$ in $K[X_P]$, and thus $[\partial_i,p_0]\neq 0$, a contradiction.
\end{proof}

\begin{Cor}
The Weyl algebra $W_{X_P}$ is neither left nor right Artinian.
\end{Cor}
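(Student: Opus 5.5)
Since $W_{X_P}$ is a domain that is not a division ring, a strictly descending chain of one-sided ideals can be built from the powers of a single non-unit. First I choose a nonzero element $D\in W_{X_P}$ that is not a unit. The simplest choice is $D=x_1$, which is nonzero because $x_1$ acts faithfully on $K[X_P]$. By part (2) of the preceding Proposition the units of $W_{X_P}$ are exactly $K^\times$, and $x_1\notin K^\times$, since multiplication by $x_1$ raises degree on $K[X_P]$ and so is not a scalar.

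For the left ideals, I consider the chain
\begin{align*}
W_{X_P}\supseteq W_{X_P}x_1\supseteq W_{X_P}x_1^2\supseteq \cdots .
\end{align*}
Suppose for contradiction that $W_{X_P}x_1^n=W_{X_P}x_1^{n+1}$ for some $n$. Then $x_1^n=Ex_1^{n+1}$ for some $E\in W_{X_P}$. Rewriting this as $(1-Ex_1)x_1^n=0$ and using part (1) of the Proposition (no zero divisors) gives $Ex_1=1$.

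It remains to rule out a one-sided inverse, since part (2) concerns units and $Ex_1=1$ only gives a left inverse. The proof of part (2) in fact treats one-sided inverses: it shows that $DD'=1$ forces $D\in K^\times$, and the same symbolic argument with the roles of the factors swapped handles $Ex_1=1$. Alternatively, a direct argument works. From $Ex_1=1$ we get $x_1(Ex_1-1)=0$, hence $(x_1E-1)x_1=0$, and by the domain property $x_1E=1$. So $x_1$ is a unit, which contradicts part (2).

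Therefore the chain of left ideals $W_{X_P}x_1^n$ is strictly descending, and $W_{X_P}$ is not left Artinian. The symmetric chain $x_1^nW_{X_P}$ of right ideals is handled the same way, so $W_{X_P}$ is not right Artinian either. The only point needing care is the passage from a one-sided inverse to a two-sided one, and the domain property settles it.
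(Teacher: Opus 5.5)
Your proof is correct, and it takes a different route from the paper.

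The paper's argument is structural. Since $W_{X_P}$ is left primitive, if it were Artinian then Wedderburn--Artin would make it a full matrix ring over a division ring. The facts that $W_{X_P}^\times=K^\times$ and $\dim_K W_{X_P}=\infty$ then rule this out. A matrix ring of size at least $2$ has non-scalar units, and size $1$ would make $W_{X_P}$ a division ring, hence equal to $K$.

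You instead run the standard elementary proof that an Artinian domain is a division ring. Stabilization of $W_{X_P}x_1^n$ gives, by cancellation, a left inverse of $x_1$, and the domain property upgrades it to a two-sided inverse, contradicting part (2).

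Your approach has some advantages:
\begin{itemize}
\item It is self-contained and needs only the domain property and the fact that $x_1$ is not a unit. You could even check the latter directly: $1\notin x_1K[X_P]$ for degree reasons, so multiplication by $x_1$ has no right inverse in $\mathrm{End}_K(K[X_P])$.
\item It handles left and right ideals by the same symmetric computation. The paper's one-line appeal relies on primitivity, and for the right-handed statement it implicitly uses a right-sided version of the structure theory (e.g.\ that a prime right Artinian ring is simple Artinian).
\end{itemize}
The paper's version is shorter once that theory is granted.

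One cosmetic point: rename your coefficient $E$, since $E$ denotes the Euler operator throughout the paper.
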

\begin{proof}
Since $W_{X_P}$ is a left primitive ring of infinite dimension over $K$ with $W_{X_P}(K)^\times=K^\times$, the assertion is a consequence of the Wedderburn-Artin theorem. 
\end{proof}

\subsection{Simplicity of Weyl algebras}

Let 
\begin{align*}
    E:=\sum_{i=1}^{\dim V_P} x_i\frac{\partial}{\partial x_i}.
\end{align*}
be the usual Euler operator on $V_P$. Note that $E$ is a differential operator on $X_P^\circ,$ which corresponds to the differential of the $M^{\mathrm{ab}}$-action on $X_P^\circ$ \eqref{eq:action}.
\begin{Lem}\label{Lem:unitgen}
Let $r\in \zz_{>0}$. We have
\begin{align*}
    \sum_{b\in \mathcal{B}_r} b(\mathbf{x})b^{\mathrm{op}}(\mathbf{\partial})&=Z_r(E),\\
     \sum_{b\in \mathcal{B}_r} b^{\mathrm{op}}(\mathbf{\partial}) b(\mathbf{x})&=Z_r(-E-2s_k-2),
\end{align*}
where
\begin{align*}
    Z_r(t):=\prod_{n=0}^{r-1}J(t-n).
\end{align*}
\end{Lem}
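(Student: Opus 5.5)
We prove both identities with the same strategy: use $G$-equivariance together with Schur's lemma to show that each left-hand side is an operator that acts on each graded piece $V^\lor_{-n\omega_P}$ of $K[X_P]$ by a scalar. We then compute that scalar by evaluating on the single vector $x_1^n$.

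\textbf{Step 1 (invariance).} Set $\Omega_r:=\sum_{b\in\mathcal{B}_r} b(\mathbf{x})\,b^{\mathrm{op}}(\partial)$. By construction $\sum_b b\otimes b^{\mathrm{op}}$ is $G$-invariant, and $f\mapsto f(\mathbf{x})$ and $g\mapsto g(\partial)$ are $G$-equivariant. The latter holds by Lemma \ref{eq:Fourierauto} and the equivariance of the Fourier transform. Hence $\Omega_r$, viewed as an endomorphism of $K[X_P]$, commutes with $G$.

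\textbf{Step 2 (homogeneity and Schur).} $\Omega_r$ also preserves the grading. Multiplication by $b$ raises degree by $r$. The operator $b^{\mathrm{op}}(\partial)$ lowers degree by $r$, because $\partial_i$ lowers degree by one: by \eqref{eq:realization} each term $p_{\underline j}\,\partial^{|\underline j|}/\partial\mathbf{x}^{\underline j}$ has $\deg p_{\underline j}=|\underline j|-1$. So $\Omega_r$ maps $V^\lor_{-n\omega_P}$ into itself $G$-equivariantly. Since this module is absolutely irreducible, Schur's lemma says $\Omega_r$ acts there by a scalar $\mu_n$. The Euler operator $E$ acts on the same space by $n$, so it suffices to check $\mu_n=Z_r(n)$.

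\textbf{Step 3 (compute $\mu_n$).} Evaluate $\Omega_r$ on $x_1^n$. The vector $x_1^n$ is a highest (extremal) weight vector, and every $\partial_i$ with $i\neq 1$ kills it, as does $\partial_1$ except for producing $J(n)x_1^{n-1}$. A weight argument should then show $b^{\mathrm{op}}(\partial)(x_1^n)=0$ for every $b\ne x_1^r$. Indeed, $b^{\mathrm{op}}$ is a weight vector of $V_{-r\omega_P}$, and only the extremal weight of $(x_1^{\mathrm{op}})^r$ can map $x_1^n$ to the weight space of $x_1^{n-r}$, which is one-dimensional in $V^\lor_{-(n-r)\omega_P}$. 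If one prefers to avoid weights, one can instead argue that monomials in $\partial$ other than $\partial_1^r$ involve some $\partial_i$, $i\ne1$, hitting a power of $x_1$ first. Since $(x_1^r)^{\mathrm{op}}=(x_1^{\mathrm{op}})^r$, we get
\[
\Omega_r(x_1^n)=x_1^r\,\partial_1^r(x_1^n)=\prod_{m=0}^{r-1}J(n-m)\,x_1^n=Z_r(n)\,x_1^n.
\]
For $n<r$ this is $0$ automatically, because $J(0)=0$ as $J\in s\zz[s]$. This gives the first identity.

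\textbf{Step 4 (second identity).} Apply the same argument to $\Omega'_r:=\sum_b b^{\mathrm{op}}(\partial)\,b(\mathbf{x})$, which is again $G$-invariant and degree-preserving, hence a scalar $\mu'_n$ on degree $n$. Evaluating $\Omega'_r$ on $x_1^n$ is harder, since now $b(\mathbf{x})x_1^n$ is not extremal and many terms contribute. Rather than expanding, I would evaluate $\Omega'_r$ on a lowest weight vector $y\in V^\lor_{-n\omega_P}$, namely the $w_0$-translate of $x_1^n$. Then $b\,y$ lies in degree $n+r$, and $b^{\mathrm{op}}(\partial)$ pairs against it. Alternatively, transport the problem through $\mathcal{F}_{P|P^{\mathrm{op}}}$ (Lemma \ref{eq:Fourierauto}), which swaps the roles of $x$ and $\partial$ and turns $\Omega'_r$ into the analogue of $\Omega_r$ on $X_{P^{\mathrm{op}}}$. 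This reduces the second identity to determining how the Fourier transform conjugates the Euler operator. I expect $\mathcal{F}\circ E\circ\mathcal{F}^{-1}=-E-2s_k-2$, reflecting the homogeneity of $\mathcal{F}$ under $M^{\mathrm{ab}}$: the factor $2s_k+2$ should be the modulus exponent of the $M^{\mathrm{ab}}$-action on $X_P^\circ$, as in Tate's thesis, where $s\mapsto 1-s$.

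The main obstacle is this last step: pinning down the constant $2s_k+2$, and checking that the roles of $J$ and the basis duality survive the swap. If the Fourier route is unwieldy, I would fall back on a direct computation, evaluating $\partial_1^r$ on $x_1^{n+r}$ together with the vanishing of the other terms on a lowest weight vector. Invariance pins down the form of the answer; the only real work is computing the scalar on one vector.
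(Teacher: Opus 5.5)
Your first identity follows the paper's proof exactly: $G$-invariance, Schur's lemma on $V^\lor_{-n\omega_P}$, then evaluation at $x_1^n$. Your weight argument for why the terms with $b\neq x_1^r$ kill $x_1^n$ fills in a step the paper leaves implicit.

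For the second identity you take the paper's route, but you leave its only substantive step as ``I expect'', so the identity is not proved. What is needed is the intertwining relation
\[
\mathcal{F}_{P^{\mathrm{op}}|P}\circ E=(-E-2s_k-2)\circ\mathcal{F}_{P^{\mathrm{op}}|P}
\]
as operators on Schwartz spaces. The paper obtains it by differentiating the twisted equivariance $\mathcal{F}_{P^{\mathrm{op}}|P}(m.f)=\delta_P(m)\,m.\mathcal{F}_{P^{\mathrm{op}}|P}(f)$ for $m\in M^{\mathrm{ab}}(\rr)$. It then identifies the resulting constant with $2s_k+2$ via \cite[Proposition 6.2]{Getz:Hsu:Leslie}. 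Your Tate-thesis heuristic supplies neither the equivariance nor the constant.

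This conjugation must be carried out on $\mathcal{S}(X_P(\rr))$, not inside the isomorphism of Lemma~\ref{eq:Fourierauto}. The reason is that $E\in W_{X_P}$ is only deduced afterwards (Proposition~\ref{prop:E}), from this very lemma.

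Your fallback does not work as stated, because the other terms do not vanish on an extremal vector. Already classically, every $i$ contributes to $\sum_i\partial_i(x_ix_N^n)$. What you can get algebraically is the scalar itself. Since $b^{\mathrm{op}}(\partial)$ lowers degree by $r$, cyclicity of the trace together with the first identity shows that $\sum_b b^{\mathrm{op}}(\partial)b(\mathbf{x})$ acts on $V^\lor_{-n\omega_P}$ by $Z_r(n+r)\dim V^\lor_{-(n+r)\omega_P}/\dim V^\lor_{-n\omega_P}$. Matching this with $Z_r(-n-2s_k-2)$ is a nontrivial identity between the Hilbert function of $X_P$ and the roots of $J$, and the Fourier argument is the natural way to prove it.

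That trace formula also shows that your worry about whether $J$ ``survives the swap'' is justified.
\begin{itemize}
\item The scalar is positive for all $n\ge 0$, because $J$ has positive leading coefficient and its roots lie in $(-s_k-1,0]$.
\item By contrast, $Z_r(-n-2s_k-2)$ has sign $(-1)^{rd_P}$, because all its arguments lie below $-s_k-1$.
\item Concretely, if $X_P$ is a vector space ($d_P=1$, $J(s)=s$), then $\sum_i\partial_ix_i=E+\dim V_P$. This is not $-E-2s_k-2$ for any value of $s_k$.
\end{itemize}
So the second identity can only hold with an extra factor $(-1)^{rd_P}$.

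The sign enters at the step ``apply the first identity to $P^{\mathrm{op}}$''. There, $\partial_i$ and $\partial_i^{\mathrm{op}}$ carry the same constant $(2\pi\sqrt{-1})^{d_P}$, while $\mathcal{F}_{P^{\mathrm{op}}|P}$ inverts $\mathcal{F}_{P|P^{\mathrm{op}}}$. In the vector-space case, conjugating $[\partial_i,x_i]=1$ by $\mathcal{F}_{P|P^{\mathrm{op}}}$ gives $[\partial_i^{\mathrm{op}},x_i^{\mathrm{op}}]=-1$, so $J$ becomes $-J$ on the $P^{\mathrm{op}}$ side. The sign is harmless later on, since Lemma~\ref{lem:coprime}, Proposition~\ref{prop:E} and Corollary~\ref{cor:critical} only use the roots. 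Still, a complete version of your Step~4 has to track it.
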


\begin{proof}
We may assume $K=\qq$. Observe that for each $j\ge 0,$ $E$ acts on $V_{-j\omega_P}^\lor$ by the scalar multiplication by $j$. Similarly, the operator
\begin{align*}
    L:=\sum_{b\in \mathcal{B}_r} b(\mathbf{x})b^{\mathrm{op}}(\partial)
\end{align*}
is invariant under the $G$-action, and hence by Schur's lemma $L$ acts on $V_{-j\omega_P}^\lor$ by some scalar $c_j$ for each $j\ge 0$. Explicitly,
\begin{align*}
c_jx_1^j=L(x_1^j)=x_1^r\partial_1^r(x_1^j)=\prod_{n=0}^{r-1} J(j-n)x_1^j=Z_r(j)x_1^j=Z_r(E)(x_1^j).
\end{align*}
Since both $Z_r(E)$ and $L$ are differential operators of order $rd_P$ and their actions agree on $\qq[X_P],$ we have $L=Z_r(E)$.

Now let 
\begin{align*}
R:= \sum_{b\in \mathcal{B}_r} b^{\mathrm{op}}(\mathbf{\partial}) b(\mathbf{x}).
\end{align*}
Consider its action on $\mathcal{S}(X_{P}(\rr))$. For $f\in\mathcal{S}(X_{P^{\mathrm{op}}}(\rr)),$ applying  the computation above to $P^{\mathrm{op}},$ we have
\begin{align*}
    R\circ \mathcal{F}_{P^{\mathrm{op}}|P}(f)=\mathcal{F}_{P^{\mathrm{op}}|P}\circ L(f)=\mathcal{F}_{P^{\mathrm{op}}|P}\circ Z_r(E)(f).
\end{align*}
Note that $E$ can be identified as an element in $\mathfrak{m}^{\mathrm{ab}}.$ For $m\in M^{\mathrm{ab}}(\rr),$ let $m.(\cdot)$ denote the natural action of $M^{\mathrm{ab}}(\rr)$ on $\mathcal{S}(X_{P}(\rr))$ and $\mathcal{S}(X_{P^{\mathrm{op}}}(\rr))$ induced by \eqref{eq:action}. Then  
\begin{align*}
    \mathcal{F}_{P^{\mathrm{op}}|P}(m.f)=\delta_P(m)m.\mathcal{F}_{P^{\mathrm{op}}|P}(f),
\end{align*}
where $\delta_P$ is the modular character of $P$. Taking the differential of the above identity, we have by \cite[Proposition 6.2]{Getz:Hsu:Leslie}
\begin{align*}
    \mathcal{F}_{P^{\mathrm{op}}|P}\circ E(f)=(-2s_k-2-E)\circ \mathcal{F}_{P^{\mathrm{op}}|P}(f).
\end{align*}
This implies 
\begin{align*}
     R\circ \mathcal{F}_{P^{\mathrm{op}}|P}(f)=Z_r(-E-2s_k-2)\circ \mathcal{F}_{P|P^{\mathrm{op}}}(f)
\end{align*}
for any $f\in\mathcal{S}(X_{P^{\mathrm{op}}}(\rr)).$ The lemma follows.
\end{proof}


\begin{Lem}\label{lem:coprime}
For any $r_1,r_2\in \zz_{>0}$. The polynomials $Z_{r_1}(t), Z_{r_2}(-t-2s_k-2)$ are coprime.
\end{Lem}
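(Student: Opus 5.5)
We will compare the roots of the two polynomials explicitly. By definition,
\begin{align*}
J(s)=\prod_{i=1}^k \lambda_i^{\lambda_i}\prod_{\ell=0}^{\lambda_i-1}\Big(s+s_k-\frac{s_i}{\lambda_i}+\frac{\ell}{\lambda_i}\Big),
\end{align*}
so every root of $J$ has the form $\rho=\frac{s_i}{\lambda_i}-s_k-\frac{\ell}{\lambda_i}$ with $0\le \ell\le \lambda_i-1$. Hence the roots of $Z_{r_1}(t)=\prod_{n=0}^{r_1-1}J(t-n)$ are the numbers $t=n+\rho$ with $n\ge 0$. The roots of $Z_{r_2}(-t-2s_k-2)$ are the numbers $t=-2s_k-2-m-\rho'$ with $m\ge 0$ and $\rho'$ another root of $J$. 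Since all these roots lie in $\qq$, it suffices to show the two root sets are disjoint. The plan is to prove a uniform bound
\begin{align*}
\rho > -s_k-1 \quad\text{for every root } \rho \text{ of } J.
\end{align*}
Granting this, every root of $Z_{r_1}$ satisfies $n+\rho>-s_k-1$. Every root of the second polynomial satisfies $-2s_k-2-m-\rho'<-2s_k-2+s_k+1=-s_k-1$. The two sets are therefore separated by the point $-s_k-1$, which proves coprimality.

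It remains to prove the bound. We have
\begin{align*}
\rho+s_k+1=\frac{s_i}{\lambda_i}+1-\frac{\ell}{\lambda_i}\ge \frac{s_i}{\lambda_i}+1-\frac{\lambda_i-1}{\lambda_i}=\frac{s_i+1}{\lambda_i}>0,
\end{align*}
using $\ell\le\lambda_i-1$, $s_i\ge 0$ and $\lambda_i\ge 1$. So the bound holds with room to spare, and only the facts $(s_i,\lambda_i)\in\frac12\zz_{\ge0}\times\zz_{>0}$ recorded in \S\ref{sec:review} are needed. The good ordering and the special role of $(s_k,\lambda_k)$ are not required.

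The main point to check is bookkeeping rather than substance. We must verify the sign conventions in $J$, namely that its roots are $-s_k+\frac{s_i-\ell}{\lambda_i}$, as read off from the factor $s+s_k-\frac{s_i}{\lambda_i}+\frac{\ell}{\lambda_i}$. We must also confirm that composing with $t\mapsto -t-2s_k-2$ reflects about $-s_k-1$. Indeed $t\mapsto -t-2s_k-2$ fixes $-s_k-1$. So the second root set is the reflection about $-s_k-1$ of the roots $m+\rho'$ of $Z_{r_2}$, all of which lie strictly to the right of $-s_k-1$. This makes the separation transparent: $Z_{r_1}$ has all roots in $(-s_k-1,\infty)$, and the reflected polynomial has all roots in $(-\infty,-s_k-1)$.
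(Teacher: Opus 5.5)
Your proof is correct and is essentially the paper's argument: the paper likewise shows that every root of $Z_{r_1}(t)$ lies strictly above $-s_k-1$ and every root of $Z_{r_2}(-t-2s_k-2)$ lies strictly below it, using $\ell<\lambda_i$ and $s_i\ge 0$. Your description of the second root set as the reflection about $-s_k-1$ is the same computation organized slightly differently.
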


\begin{proof}
Roots of $J(t-n)$ are
\begin{align*}
    n-s_k+\frac{s_i}{\lambda_i}-\frac{\ell}{\lambda_i}>n-s_k-1.
\end{align*}
While roots of $J(-t-2s_k-2-n)$ are
\begin{align*}
    -2s_k-2-n+s_k-\frac{s_i}{\lambda_i}+\frac{\ell}{\lambda_i}<-s_k-1-n.
\end{align*}
Therefore, roots of $Z_{r_1}(t)$ and $Z_{r_2}(-t-2s_k-2)$ are disjoint.
\end{proof}

\begin{Prop}\label{prop:E}
We have $E\in W_{X_P}(\qq)$.
\end{Prop}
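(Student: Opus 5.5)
The plan is to use the two identities of Lemma \ref{Lem:unitgen} together with the coprimality of Lemma \ref{lem:coprime}. Take $r=1$. Lemma \ref{Lem:unitgen} gives
\begin{align*}
    A:=\sum_{b\in \mathcal{B}_1} x_b\,\partial_b = J(E),\qquad B:=\sum_{b\in\mathcal{B}_1}\partial_b\, x_b = J(-E-2s_k-2),
\end{align*}
where $x_b=b(\mathbf{x})$ and $\partial_b=b^{\mathrm{op}}(\partial)$. Both $A$ and $B$ lie in $W_{X_P}(\qq)$ by definition. All the operators here are polynomials in the single operator $E$, so they commute with each other. Consequently the subalgebra $\qq[A,B]\subseteq W_{X_P}(\qq)$ is the image of $\qq[J(t),J(-t-2s_k-2)]\subseteq\qq[t]$ under $t\mapsto E$.

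First we produce the identity. By Lemma \ref{lem:coprime}, $J(t)$ and $J(-t-2s_k-2)$ are coprime in $\qq[t]$. Bezout then gives $u,v\in\qq[t]$ with $u(t)J(t)+v(t)J(-t-2s_k-2)=1$. However, $u(E)$ and $v(E)$ are not yet known to lie in $W_{X_P}$, so Bezout alone does not suffice. Instead we argue inside the subalgebra $S:=\qq[J(t),J(-t-2s_k-2)]\subseteq\qq[t]$ and aim to show $S=\qq[t]$. Then $t\in S$, and substituting $t=E$ gives $E\in W_{X_P}(\qq)$.

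To show $S=\qq[t]$, set $p(t)=J(t)$ and $q(t)=J(-t-2s_k-2)$, both of degree $d_P$. The leading coefficient of $q$ is $(-1)^{d_P}$ times that of $p$. Hence for a suitable constant $\epsilon=\pm1$, the element $p-\epsilon q\in S$ has degree $<d_P$. Its degree-$(d_P-1)$ coefficient is determined by the sums of the roots. The roots of $p$ and the roots of $q$ are exchanged by the reflection $t\mapsto -t-2s_k-2$. So if $d_P$ is even, $p-q$ has a nonzero $t^{d_P-1}$ coefficient, equal to (leading coefficient)$\cdot d_P(2s_k+2)$ up to sign; if $d_P$ is odd, $p+q$ does. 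Call this element $h$; it has degree exactly $d_P-1$.

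We then iterate a Euclidean-type reduction within $S$, using that $S$ is closed under products and linear combinations. The goal is a sequence of elements of $S$ whose degrees have gcd $1$ and which are suitably "monic". This is the main obstacle: in general, a subalgebra generated by two polynomials need not be all of $\qq[t]$. Coprimality is what must force it here. The cleanest route I would try is to view $\qq[t]$ as a module over $S$ and note that $\qq[t]$ is integral over $S$ (indeed over $\qq[p]$). The map $\mathrm{Spec}\,\qq[t]\to\mathrm{Spec}\,S$ is then finite. One checks it is injective on points and unramified, which uses that the fibers of $p$ and $q$ separate points; this is where the disjointness of the roots from Lemma \ref{lem:coprime} enters, via the two real-half-line root configurations. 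One also checks that $S$ is normal, or that the conductor is trivial, which would give $S=\qq[t]$.

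If this geometric route stalls, the fallback is to abandon $S$ and use genuinely noncommutative elements. The commutator $[A,x_1]$ is computed from $A=J(E)$ as $x_1\big(J(E+1)-J(E)\big)$. Combining such commutators with the $r\ge1$ identities $Z_r(E)$ and $Z_r(-E-2s_k-2)$ yields many more polynomials in $E$ inside $W_{X_P}(\qq)$, such as $Z_{r}(E)$ for all $r$ and their shifts. One then argues that the ideal-like set $\{f\in\qq[t]: f(E)\in W_{X_P}(\qq)\}$ contains polynomials with gcd-of-degrees $1$ and enough shifts to reach $t$ by finite differencing.
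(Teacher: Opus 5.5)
Your main route fails: the claim $S=\qq[J(t),J(-t-2s_k-2)]=\qq[t]$ is false as soon as $d_P\ge 3$. So the step where you check that $\Phi_1\colon t\mapsto (J(t),J(-t-2s_k-2))$ is injective on points (equivalently, that $S$ is normal) cannot succeed.

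Your own computation of $h$ shows why. Since $\deg h=d_P-1$ is prime to $d_P=\deg J$, the map $\Phi_1$ is birational onto its image $C_1$, which is a fact the paper also uses. So $C_1$ is a rational plane curve of degree $d_P$. Its closure in $\mathbb{P}^2$ has a single point at infinity, and that point is smooth exactly because $\deg h=d_P-1$. The genus formula then forces the affine curve $C_1$ to carry total $\delta$-invariant $(d_P-1)(d_P-2)/2>0$, so it is singular. Concretely, for $d_P=3$, put $u=t+s_k+1$ and $g(u)=J(u-s_k-1)$, so that $J(t)=g(u)$ and $J(-t-2s_k-2)=g(-u)$. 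Then $S\otimes\cc=\cc[u^2,u^3+\beta u]$, which is the coordinate ring of the singular cubic $w^2=v(v+\beta)^2$. This is why the paper's proof must treat the case where $C_1$ is singular.

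Two smaller points. First, the $t^{d_P-1}$-coefficient of $h$ is $-a\sum_i(2\alpha_i+2s_k+2)$, where $a$ is the leading coefficient and $\alpha_i$ are the roots of $J$. It is not $\pm a\,d_P(2s_k+2)$, though it is still nonzero since every $\alpha_i>-s_k-1$. Second, over $\qq$, ``finite, unramified, injective on points'' does not imply an isomorphism, because two conjugate complex points can be glued to one rational point. Injectivity has to be checked over $\cc$.

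Your fallback does not close the gap. The set $\{f\in\qq[t]: f(E)\in W_{X_P}(\qq)\}$ is a subalgebra, not an ideal. Knowing it contains polynomials whose degrees have gcd $1$ only gives that its fraction field is $\qq(t)$; compare $\qq[t^2,t^3]$. The appeal to ``shifts and finite differencing'' is not an argument.

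The missing idea is to use the whole family $Z_j(t),Z_j(-t-2s_k-2)$ for $1\le j\le N$, which Lemma \ref{Lem:unitgen} already places in $W_{X_P}(\qq)$. One then proves that $\Phi_N\colon\cc\to\cc^{2N}$ is a closed immersion for $N$ large.
\begin{itemize}
\item Unramifiedness already holds for $N=1$ by Rolle: the roots of $J'$ lie in $(-s_k-1,0]$, and those of the derivative of $J(-t-2s_k-2)$ lie in $[-2s_k-2,-s_k-1)$.
\item Injectivity comes from separating the finitely many pairs $t_1\neq t_2$ glued by $\Phi_1$, using the recursion $Z_{N+1}(t)=Z_N(t)J(t-N)$. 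Pick $N$ with $J(t_1-N)\neq J(t_2-N)$. If $Z_N(t_1)=Z_N(t_2)\neq 0$, then $Z_{N+1}$ separates them. If both vanish, Lemma \ref{lem:coprime} makes $Z_{N'}(-t_i-2s_k-2)\neq 0$ for all $N'$, and the same argument works on that side.
\end{itemize}
This is where coprimality genuinely enters, rather than through Bezout. A closed immersion gives $\cc[Z_j(t),Z_j(-t-2s_k-2)]_{j\le N}=\cc[t]$, and faithful flatness descends this to $\qq$.
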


\begin{proof}
By Lemma \ref{Lem:unitgen} the assertion follows if  there exists $N\in \zz_{>0}$ such that the $\qq$-subalgebra of $\qq[t]$ generated by $\{Z_{j}(t), Z_j(-t-2s_k-2): 1\le j\le N\}$  is $\qq[t]$. By base change, we can prove the statement over $\cc$. It suffices to show there is $N$ such that the map 
\begin{align*}
    \Phi_N: \cc&\to \cc^{2N}\\
    t&\mapsto (Z_j(t), Z_j(-t-2s_k-2))_{1\le j\le N}
\end{align*}
is injective and $\frac{d}{dt}\Phi_N$ is nonvanishing. Since $Z_1(t)$ and $Z_1(-t-2s_k-2)$ have rational roots in $(-s_k-1,0]$ and $[-2s_k-2,-s_k-1)$ respectively, by Rolle's theorem $\frac{d}{dt}\Phi_1$ is nonvanishing and so is $\frac{d}{dt}\Phi_N$ for any $N$.

Because the fraction field of $\cc[Z_1(t), Z_1(-t-2s_k-2)]$ is $\cc(t)$, the map $\Phi_1:\cc\to C_1:=\mathrm{im}(\Phi_1)$ is the normalization of the affine curve $C_1$. If $C_1$ is smooth then we are done. Suppose $C_1$ is singular and let  $S$ be its singular locus. Given $(x,y)\in S$, let $t_1\neq t_2$ such that $\Phi_1(t_1)=\Phi_1(t_2)=(x,y)$. We claim there exists $N$ such that either $Z_{N}(t_1)\neq Z_N(t_2)$ or $Z_{N}(-t_1-2s_k-2)\neq Z_{N}(-t_2-2s_k-2)$. Then since $|S|<\infty$ and $\Phi_1$ is a finite morphism, $\Phi_N$ is injective for $N$ sufficiently large, and the proposition follows.

To see the claim, since $t_1\neq t_2$ and $J(t)$ is a polynomial, we can choose $N$ large so that $J(t_1-N)\neq J(t_2-N).$ If $Z_{N}(t_1)\neq Z_{N}(t_2),$ then we are done. Suppose $Z_{N}(t_1)= Z_{N}(t_2)$. If they are both nonzero, then
\begin{align*}
    Z_{N+1}(t_1)=Z_{N}(t_1)J(t_1-N)\neq Z_{N}(t_2)J(t_2-N)=Z_{N+1}(t_2).
\end{align*}
If they are both zero, then both $Z_N(-t_1-2s_k-2)$ and $Z_{N}(-t_2-2s_k-2)$ are nonzero by Lemma \ref{lem:coprime}. Using the same argument, we can choose $N'\ge N$ large so that $Z_{N'}(-t_1-2s_k-2)\neq Z_{N'}(-t_2-2s_k-2)$. This proves the claim.
\end{proof}

\begin{Cor}\label{cor:critical}
    Let $M\subseteq M'$ be left $W_{X_P}$-modules. Let $m\in M'$. Suppose there is $r\in \zz_{\ge 1}$ such that 
    \begin{enumerate}
        \item either $b(\mathbf{x})m, b^{\mathrm{op}}(\partial)m\in M$  for all $b\in \mathcal{B}_r$, or
        \item both $Z_r(E)m$ and $Z_r(-E-2s_k-2)m$ are contained in $M$.
    \end{enumerate}
    Then $m\in M$. In particular, if in (1) or (2) all terms are zero, then $m=0$.
\end{Cor}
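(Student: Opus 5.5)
Case (1) reduces to case (2) through Lemma \ref{Lem:unitgen}, and case (2) is settled with a B\'ezout identity from Lemma \ref{lem:coprime}.

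\textbf{Reduction of (1) to (2).} Assume $b(\mathbf{x})m$ and $b^{\mathrm{op}}(\partial)m$ lie in $M$ for every $b\in\mathcal{B}_r$. Since $M$ is a $W_{X_P}$-submodule, it contains
\begin{align*}
Z_r(E)m=\sum_{b\in\mathcal{B}_r} b(\mathbf{x})\,\bigl(b^{\mathrm{op}}(\partial)m\bigr),\qquad Z_r(-E-2s_k-2)m=\sum_{b\in\mathcal{B}_r} b^{\mathrm{op}}(\partial)\,\bigl(b(\mathbf{x})m\bigr).
\end{align*}
So the hypotheses of (2) hold, and it suffices to treat (2).

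\textbf{Proof of (2).} By Lemma \ref{lem:coprime}, the polynomials $Z_r(t)$ and $Z_r(-t-2s_k-2)$ are coprime in $\qq[t]$. Since $\qq[t]$ is a PID, there are $A(t),B(t)\in\qq[t]$ with
\begin{align*}
A(t)Z_r(t)+B(t)Z_r(-t-2s_k-2)=1.
\end{align*}
Substitute $t=E$. This is legitimate because $E\in W_{X_P}(\qq)$ by Proposition \ref{prop:E}, so $A(E)$ and $B(E)$ lie in $W_{X_P}$. Polynomials in the single element $E$ commute with one another, so the substitution is a ring homomorphism $\qq[t]\to W_{X_P}$. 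This gives
\begin{align*}
m=A(E)\bigl(Z_r(E)m\bigr)+B(E)\bigl(Z_r(-E-2s_k-2)m\bigr).
\end{align*}
Both terms lie in $M$ because $M$ is a submodule, hence $m\in M$.

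\textbf{The ``in particular'' clause.} Apply the statement with $M=0$ and $M'$ the given module. If all the listed elements are zero, they lie in $M=0$, so $m\in M$, i.e.\ $m=0$.

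There is no real obstacle. The only point needing care is that $E$ belongs to $W_{X_P}$ itself, not just to $\mathcal{D}(X_P^\circ)$; otherwise $A(E)$ and $B(E)$ would not act on $M'$. Proposition \ref{prop:E} supplies exactly this.
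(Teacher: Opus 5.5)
Your proof is correct and matches the paper's argument: Lemma \ref{Lem:unitgen} turns hypothesis (1) into (2), and a B\'ezout identity from Lemma \ref{lem:coprime}, evaluated at $E\in W_{X_P}$ via Proposition \ref{prop:E}, gives $m\in M$. The only difference is that you write out the details the paper leaves implicit.
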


\begin{proof}
    By Lemma \ref{Lem:unitgen}, both assumptions imply $Z_r(E)m, Z_r(-E-2s_k-2)m\in M$. Then $m\in M$ by Lemma \ref{lem:coprime} and Proposition \ref{prop:E}. The last assertion is a special case where $M=0$.
\end{proof}

\begin{Cor}\label{cor:E}
The isomorphism  $\mathcal{F}_{P|P^{\mathrm{op}}}: W_{X_P}\longrightarrow W_{X_{P^{\mathrm{op}}}}$
induces a $K$-automorphism of the subring $K[E]$ given by $E+s_k+1\mapsto -(E+s_k+1)$.\qed
\end{Cor}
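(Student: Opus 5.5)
The strategy is to compute $\mathcal{F}_{P|P^{\mathrm{op}}}(E)$ directly, using the fact from Proposition \ref{prop:E} that $E$ lies in the subalgebra of $W_{X_P}$ generated by the elements
\[
\sum_{b\in\mathcal{B}_r} b(\mathbf{x})b^{\mathrm{op}}(\partial)=Z_r(E),\qquad \sum_{b\in\mathcal{B}_r} b^{\mathrm{op}}(\partial)b(\mathbf{x})=Z_r(-E-2s_k-2).
\]
So $E=\Psi\bigl(Z_j(E),\,Z_j(-E-2s_k-2)\bigr)_{j\le N}$ for some polynomial expression $\Psi$ over $\qq$. The isomorphism of Lemma \ref{eq:Fourierauto} sends $x_i\mapsto \partial_i^{\mathrm{op}}$ and $\partial_i\mapsto x_i^{\mathrm{op}}$. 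More generally, it sends $b(\mathbf{x})\mapsto b(\partial^{\mathrm{op}})$ and $b^{\mathrm{op}}(\partial)\mapsto b^{\mathrm{op}}(\mathbf{x}^{\mathrm{op}})$, since $\mathcal{F}$ is an algebra map and $b, b^{\mathrm{op}}$ are polynomials in the coordinates. Hence
\[
\mathcal{F}\bigl(Z_r(E)\bigr)=\sum_{b\in\mathcal{B}_r} b(\partial^{\mathrm{op}})\,b^{\mathrm{op}}(\mathbf{x}^{\mathrm{op}}).
\]

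The next step is to identify this sum with the analogous sum on $X_{P^{\mathrm{op}}}$. The roles of $P$ and $P^{\mathrm{op}}$ are symmetric. The pairs $(b^{\mathrm{op}},b)$ form dual bases of $V_{-r\omega_P}\otimes V^\lor_{-r\omega_P}$, corresponding to the same invariant tensor, so the sum is basis-independent. By construction the element $x_1^{\mathrm{op}}$ plays the role for $P^{\mathrm{op}}$ that $x_1$ plays for $P$, with $(x_1^{\mathrm{op}})^{j}$ paired to $x_1^j$. The data $(s_i,\lambda_i)$, $J$ and $s_k$ attached to $(G,P^{\mathrm{op}})$ coincide with those of $(G,P)$, since $P^{\mathrm{op}}$ is conjugate to a standard parabolic of the same type (this is used implicitly in the proof of Lemma \ref{Lem:unitgen}, where the computation was "applied to $P^{\mathrm{op}}$"). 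Applying Lemma \ref{Lem:unitgen} on $X_{P^{\mathrm{op}}}$, with the order of factors reversed, then gives
\[
\mathcal{F}(Z_r(E))=Z_r(-E^{\mathrm{op}}-2s_k-2),\qquad \mathcal{F}(Z_r(-E-2s_k-2))=Z_r(E^{\mathrm{op}}),
\]
where $E^{\mathrm{op}}$ is the Euler operator on $V_P^\lor$.

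It follows that $\mathcal{F}$ maps $K[E]$ into $K[E^{\mathrm{op}}]$ via $\mathcal{F}(E)=\Psi\bigl(Z_j(-E^{\mathrm{op}}-2s_k-2),\,Z_j(E^{\mathrm{op}})\bigr)$. To conclude, apply the polynomial identity $t=\Psi\bigl(Z_j(t),Z_j(-t-2s_k-2)\bigr)$ in $\qq[t]$ at $t=-E^{\mathrm{op}}-2s_k-2$. This yields $\mathcal{F}(E)=-E^{\mathrm{op}}-2s_k-2$, that is,
\[
E+s_k+1\mapsto -(E^{\mathrm{op}}+s_k+1).
\]
Under the identification of $K[E]$ with $K[E^{\mathrm{op}}]$, which is how "automorphism of $K[E]$" is to be read, this is the stated involution. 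Its inverse is given by $\mathcal{F}_{P^{\mathrm{op}}|P}$, so it is an automorphism.

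An alternative route is the identity $\mathcal{F}\circ E=(-2s_k-2-E)\circ\mathcal{F}$ from the proof of Lemma \ref{Lem:unitgen}, read as conjugation of operators on Schwartz spaces. The main obstacle is bookkeeping rather than substance: one must check that the invariant-tensor sums are matched correctly under $\mathcal{F}$ (the order of $b$ and $b^{\mathrm{op}}$ factors swaps) and that the constants $J$ and $s_k$ agree for $P$ and $P^{\mathrm{op}}$.
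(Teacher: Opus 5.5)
Your argument is correct, but it takes a different route from the one the paper evidently intends. The paper states the corollary without proof. The most direct justification from the preceding text is the following. Proposition \ref{prop:E} puts $E$ in $W_{X_P}$, so $\mathcal{F}_{P|P^{\mathrm{op}}}(E)$ is defined. The intertwining relation $\mathcal{F}_{P^{\mathrm{op}}|P}\circ E=(-2s_k-2-E)\circ\mathcal{F}_{P^{\mathrm{op}}|P}$, obtained in the proof of Lemma \ref{Lem:unitgen} by differentiating $\mathcal{F}(m.f)=\delta_P(m)\,m.\mathcal{F}(f)$, then gives the image immediately. That is the ``alternative route'' you mention at the end.

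Your main argument stays inside the algebra. You transport the two identities of Lemma \ref{Lem:unitgen} across $\mathcal{F}_{P|P^{\mathrm{op}}}$ and observe that they are swapped, using $(b^{\mathrm{op}})^{\mathrm{op}}=b$ and the $P^{\mathrm{op}}$-version of the lemma. You then recover $E$ itself from the expression $t=\Psi\bigl(Z_j(t),Z_j(-t-2s_k-2)\bigr)$ produced in the proof of Proposition \ref{prop:E}. The substitution $t=-E^{\mathrm{op}}-2s_k-2$ is carried out correctly.

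What your version buys is that it works verbatim with the algebraic isomorphism of Lemma \ref{eq:Fourierauto} over any $K$. The direct route instead needs two small checks: that conjugation by the analytic Fourier transform agrees with that isomorphism on $E$ (they differ by the scalars $(2\pi\sqrt{-1})^{\pm d_P}$ on $x_i,\partial_i$, which is harmless on the weight-zero element $E$), and a descent from $\cc$. What your version does not buy is independence from the analytic input. The second identity of Lemma \ref{Lem:unitgen} was itself proved from the same intertwining relation, so your proof repackages that input at somewhat greater length.

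One small correction to your bookkeeping. $P^{\mathrm{op}}$ is not in general conjugate to a standard parabolic of the same type: in type $A_n$, the opposition involution $-w_0$ sends node $\ell$ to node $n+1-\ell$. The correct reason that $J$ and $s_k$ agree for $P$ and $P^{\mathrm{op}}$ is that a Chevalley involution of $G$ carries $(T,B,P)$ to $(T,B^{\mathrm{op}},P^{\mathrm{op}})$. Equivalently, the diagram automorphism $-w_0$ lifts to an automorphism of $G$. This is also what the paper tacitly uses when it applies Lemma \ref{Lem:unitgen} to $P^{\mathrm{op}}$.
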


\begin{Thm}\label{thm:simple}
The Weyl algebra $W_{X_P}$ is a simple ring.
\end{Thm}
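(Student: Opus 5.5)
Let $I\subseteq W_{X_P}$ be a nonzero two-sided ideal. The aim is to show $1\in I$ by the classical Weyl algebra strategy: use commutators with the generators to lower a nonzero element of $I$ to a nonzero element of $K[X_P]\cap I$, and then to a nonzero constant. The difference from the classical case is that we cannot literally differentiate using $[\partial_i,\cdot]$ as a derivation. Instead we will rely on Corollary \ref{cor:critical}, which plays the role of the identity $[x,\partial]=1$.

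\textbf{Step 1: reduce to a nonzero polynomial in $I$.} Take $0\neq D\in I$. Viewing $D$ inside the classical Weyl algebra $W_{V_P}$ as in the proof of the preceding proposition, write $D=\sum_{\lambda\in I_D}p_{\lambda,D}(\mathbf{x})\partial^{\lambda}/\partial\mathbf{x}^\lambda$. The commutator $[D,x_i]$ lies in $I$. Its symbol is obtained by formally differentiating in the $\partial$-variables, so it strictly lowers the maximal order $\max|\lambda|$. For $\lambda$ of maximal $|\lambda|$ with $\lambda_i\neq0$, the coefficient $\lambda_i p_{\lambda,D}$ is nonzero in $K[X_P]$, exactly as in the computation for the center. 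Choosing $D\in I\setminus\{0\}$ of minimal order, every commutator $[D,x_i]$ must vanish, and the center argument then forces the order of $D$ to be $0$. Hence $D=p\in K[X_P]\cap I$ with $p\neq 0$.

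\textbf{Step 2: lower the degree of $p$.} Choose such a $p$ of minimal total degree, and suppose $p$ is nonconstant. We want an element of $I\cap K[X_P]$ of smaller degree. Lemma part (2) gives $i$ with $\partial_i(p)\neq0$. Now $[\partial_i,p]\in I$, but it is a differential operator whose order-zero part is $\partial_i(p)$. The naive idea is to apply Step 1's procedure to $[\partial_i,p]$ to extract a polynomial of lower degree. However, the procedure in Step 1 may land back on the top-order coefficient rather than on $\partial_i(p)$.

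\textbf{Preferred route for Step 2: use Corollary \ref{cor:critical}.} This avoids tracking polynomial degrees. Consider the quotient $M'=W_{X_P}/I$, with $M=0$ and $m=\bar 1$. It suffices to find $r\ge1$ with $Z_r(E)\in I$ and $Z_r(-E-2s_k-2)\in I$, or to show that $b(\mathbf{x})$ and $b^{\mathrm{op}}(\partial)$ lie in $I$ for all $b\in\mathcal{B}_r$. The plan is as follows.
\begin{enumerate}
    \item Since $I$ is $G$-stable only after averaging, first note that $p\in I$ implies $g\cdot p\in I$ is not automatic. So instead we work with the ideal $I\cap K[X_P]$. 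It is a nonzero ideal of $K[X_P]$, and it is stable under the operators $f\mapsto [\partial_i,f]$ followed by the reduction of Step 1.
    \item Alternatively, and more cleanly, we use the $\mathrm{ad}\,E$-grading. The element $E$ lies in $W_{X_P}$ by Proposition \ref{prop:E}, and $[E,x^{\alpha}]=|\alpha| x^\alpha$. Therefore $I$ is stable under the operators $[E,\cdot]$. By a Vandermonde argument, $I$ contains each homogeneous component of $p$. So we may take $p$ homogeneous of degree $n$, lying in $\bigoplus$ of weight pieces.
    \item Combining $\mathrm{ad}$ of the torus directions with the action of $\mathrm{ad}\,\mathfrak g$, we upgrade to $V^\vee_{-n\omega_P}\subseteq I$ up to the $G$-module it generates. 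This uses that $\mathfrak g$ acts by first-order operators, and we would check that these lie in $W_{X_P}$.
    \item This gives $x_1^n\in I$. Then $\sum_b b^{\mathrm{op}}(\partial)b(\mathbf{x})\in I$ for $r=n$, which equals $Z_n(-E-2s_k-2)$.
    \item Symmetrically, applying the Fourier isomorphism of Lemma \ref{eq:Fourierauto} to Step 1 gives some $\partial$-polynomial in $I$, and hence $Z_{n'}(E)\in I$.
    \item Multiplying by suitable factors, both $Z_N(E)$ and $Z_N(-E-2s_k-2)$ lie in $I$ for $N=\max(n,n')$. Corollary \ref{cor:critical} then gives $1\in I$.
\end{enumerate}

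\textbf{Main obstacle.} The hardest step is showing that $I$ is stable enough to pass from a single $p$ to a whole isotypic piece: either that the $\mathfrak g$-action operators lie in $W_{X_P}$, or some substitute for this. If that fails, I would use only the $E$-grading. In that case I would aim to show directly that $I\cap K[E]\neq0$, by repeated commutators with $x_i$ and $\partial_i$ landing in degree-zero $\mathrm{ad}\,E$-weight elements. Coprimality, as in Lemma \ref{lem:coprime}, would then finish the argument.
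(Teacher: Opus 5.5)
Your Step 1, your use of $\mathrm{ad}\,E$ (Proposition \ref{prop:E}) to make $p$ homogeneous, and your endgame via Lemma \ref{Lem:unitgen} and Corollary \ref{cor:critical} all match the paper. The gap is your step (3). Starting from one nonzero homogeneous $p\in I\cap V^\vee_{-n\omega_P}$, you need all of $V^\vee_{-n\omega_P}$ to lie in $I$; knowing only $x_1^n\in I$ does not put $\sum_{b\in\mathcal{B}_n}b^{\mathrm{op}}(\partial)b(\mathbf{x})$ in $I$. You get this by assuming $I$ is $\mathrm{ad}\,\mathfrak{g}$-stable, i.e.\ $\mathfrak{g}\subseteq W_{X_P}$, and you leave that unverified. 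The inclusion is true (Proposition \ref{prop:liecontained}). However, the paper proves it only later, by Ore localization at $x_1$ (Corollary \ref{cor:Ore}). The finite-GK-dimension input to that, Lemma \ref{lem:prebernstein}, itself reuses the commutator argument from the proof of simplicity. So you cannot cite it here in place of that argument, and you give no independent proof. Your fallback, producing $I\cap K[E]\neq 0$ by ``repeated commutators'', is not an argument.

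The missing idea is Lemma \ref{lem:fullentry}, which needs no $G$-stability of $I$. Instead of moving an element of $I$ by $G$, you vary the \emph{arguments} of iterated commutators; these lie in $W_{X_P}$, so the results stay in $I$. The paper applies $\mathcal{F}_{P|P^{\mathrm{op}}}$ to your Step 1 to get a nonzero $v\in I\cap\mathrm{span}(\mathcal{B}_r^{\mathrm{op}}(\partial))$. It then forms $[[\ldots[v,b_1(\mathbf{x})],\ldots],b_{j+r}(\mathbf{x})]$ with $j=r(d_P-1)$. Since $\mathrm{ord}(v)=rd_P=j+r$ (Lemma \ref{lem:orderadd}) and the $\mathrm{ad}\,E$-weight moves from $-r$ to $j$, these commutators lie in $I\cap V^\vee_{-j\omega_P}$ and define a $G$-equivariant map $\Phi$. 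Consider the locus of $[v]\in\mathbb{P}V_{-r\omega_P}$ where $\Phi(v\otimes\mathrm{Sym}^{j+r}V_P^\vee)\subsetneq V^\vee_{-j\omega_P}$. It is closed and $G$-stable, so if nonempty it contains the closed orbit through $[\partial_1^r]$. But surjectivity holds there, because $\Phi(\partial_1^r\otimes x_1^{j+r})=cx_1^j$ with $c\neq 0$. Hence $\mathrm{span}(\mathcal{B}_j)\subseteq I$. This puts \emph{both} $Z_j(E)$ and $Z_j(-E-2s_k-2)$ in $I$ at once, so your symmetric step (5) is unnecessary, and Corollary \ref{cor:critical} gives $1\in I$. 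Equivalently, you could stay on your side and commute your homogeneous $p$ with the $\partial_i$, using the Fourier-dual form of Lemma \ref{lem:fullentry}.
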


\begin{proof}
Let  $I$ be a nonzero two-sided ideal of $W_{X_P}$. For $r\ge 0,$ let $\mathrm{span}(\mathcal{B}_r^{\mathrm{op}}(\partial))$ be the $K$-span of the set $\{b^{\mathrm{op}}(\partial):b\in \mathcal{B}_r\}.$ We claim  $\mathrm{span}(\mathcal{B}_r^{\mathrm{op}}(\partial))\cap I$ is nonzero for some $r\ge 0$. To see this, by applying the isomorphism $\mathcal{F}_{P|P^{\mathrm{op}}},$ it amounts to show $K[X_P]\cap I$ is a nonzero homogeneous ideal. Since $I$ is a two-sided ideal of $W_{X_P}$ and $E\in W_{X_P}$ by Proposition \ref{prop:E}, $K[X_P]\cap I$ is a homogeneous ideal. Let $D$ be a nonzero element in $I$. If $\mathrm{ord}(D)=0$, then we are done. Otherwise, there is $i$ such that $0\neq [D,x_i]\in I$ and $\mathrm{ord}([D,x_i])<\mathrm{ord}(D)$. Successively, one obtains a nonzero element in $K[X_P]\cap I$. 

Let $j=r(d_P-1).$ Consider the $G$-equivariant multilinear map
 \begin{align*}
      V_{-r\omega_P}\times (V_P^\lor)^{j+r}&\to V^\lor_{-j\omega_P}\\
      (b^{\mathrm{op}}(\partial),b_1(\mathbf{x}),\ldots, b_{j+r}(\mathbf{x}))&\mapsto [[\ldots[[b^{\mathrm{op}}(\partial),b_1(\mathbf{x})],b_2(\mathbf{x})],\ldots], b_{j+r}(\mathbf{x})].
 \end{align*}
Since $K[X_P]$ is a commutative subring of $W_{X_P},$ by the Jacobi identity the map descends to a morphism $\Phi\in\Hom_G(V_{-r\omega_P}\otimes \mathrm{Sym}^{j+r} V_P^\lor, V_{-j\omega_P}^\lor)$ such that $\Phi(\partial_1^r \otimes x_1^{j+r})=c x_1^{j}$ for some $c\neq 0$.
As $\mathrm{span}(\mathcal{B}_r^{\mathrm{op}}(\partial))\cap I$ is nonzero, Lemma \ref{lem:fullentry} below implies $\mathrm{span}(\mathcal{B}_{r(d_P-1)})\subseteq I$. Therefore, by Lemma \ref{Lem:unitgen} $Z_{r(d_P-1)}(E)$ and $Z_{r(d_P-1)}(-E-2s_k-2)$ are contained in $I$. Corollary \ref{cor:critical} implies $1\in I$.
\end{proof}

\begin{Lem}\label{lem:fullentry}
Let $r\in \zz_{\ge 0}$. For $j\ge 0$, let
\begin{align*}
  \Phi\in \Hom_G(V_{-r\omega_P}\otimes \mathrm{Sym}^{j+r} V_P^\lor, V_{-j\omega_P}^\lor)
\end{align*}
be a map such that $\Phi(\partial_1^r \otimes x_1^{j+r})=c x_1^{j}$ for some $c\neq 0$. For any nonzero $v\in V_{-r\omega_P}$, we have $$\Phi\Big(v\otimes \mathrm{Sym}^{j+r} V_P^\lor\Big)=V_{-j\omega_P}^\lor.$$
\end{Lem}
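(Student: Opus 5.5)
Fix a nonzero $v\in V_{-r\omega_P}$ and set $U:=\Phi\big(v\otimes \mathrm{Sym}^{j+r}V_P^\lor\big)\subseteq V_{-j\omega_P}^\lor$. Because $V_{-j\omega_P}^\lor$ is irreducible, it suffices to show that $U$ is nonzero and $G$-stable. Neither property is immediate, since $v$ is fixed and is not moved by $G$. The plan is to look at the whole space
\begin{align*}
    \Phi\big(V_{-r\omega_P}\otimes \mathrm{Sym}^{j+r}V_P^\lor\big)
\end{align*}
and to use an \emph{algebra} structure on the second factor in place of the group action. The key observation is that $\mathrm{Sym}^{\bullet} V_P^\lor$ surjects onto $K[X_P]$. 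In the situation of Theorem \ref{thm:simple}, the map $\Phi$ is an iterated commutator with multiplication operators. Hence the relevant objects are the $K[X_P]$-stable images: $\Phi(v\otimes f\cdot g)$ can be read as $\mathrm{ad}$ by $f$ applied after $\mathrm{ad}$ by $g$. In the abstract setting of the lemma, however, I would rather argue purely representation-theoretically.

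\textbf{Step 1 (reduction to lowest/highest weight vectors).} Write $U_w:=\Phi(w\otimes \mathrm{Sym}^{j+r}V_P^\lor)$ for $w\in V_{-r\omega_P}$. For $g\in G$, equivariance gives $U_{gw}=gU_w$, since $\mathrm{Sym}^{j+r}V_P^\lor$ is $G$-stable. Also $U_{w+w'}\subseteq U_w+U_{w'}$, which is not a useful inclusion. So I would use the Lie algebra instead. For $X\in\mathfrak g$, we have $\Phi(Xw\otimes s)=X\Phi(w\otimes s)-\Phi(w\otimes Xs)$, and therefore $U_{Xw}\subseteq U_w+\mathfrak g\cdot U_w$. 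This is the wrong direction, so instead I would show the stronger statement $U_v=V^\lor_{-j\omega_P}$ directly for $v=\partial_1^r$ (the highest weight vector), and then handle general $v$ via a Borel-fixed point argument.

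\textbf{Step 2 (highest weight vector).} For $v=\partial_1^r$, the subspace $U_v$ is stable under the stabilizer $Q$ of the line $K\partial_1^r$, a parabolic subgroup conjugate to $P$. It contains $x_1^j$ by hypothesis. One checks that $\Phi(\partial_1^r\otimes x_1^{j+r})\neq0$ lies in the extreme weight line, and then $U_v\supseteq Q\cdot x_1^j$. Since $x_1^j$ is the extreme vector fixed by the opposite unipotent part, the span of $Q\cdot x_1^j$ is the whole module, by density of the big cell $Q\cdot B^{\mathrm{op}}$. More precisely, $U_v$ is $Q$-stable and contains the lowest weight vector relative to the Borel opposite to one inside $Q$, so $U_v\supseteq \mathrm{span}(U(\mathfrak q)x_1^j)=V^\lor_{-j\omega_P}$. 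Here the sign conventions must be checked: $x_1^j$ must be extreme with respect to the Borel in $Q$ in the direction that $\mathfrak q$ generates.

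\textbf{Step 3 (general $v$) and the main obstacle.} For arbitrary nonzero $v$, the set of $w$ with $U_w\neq V$ is closed under $G$, scalars and Zariski limits. Indeed, it is defined by the vanishing of maximal minors of the linear map $s\mapsto\Phi(w\otimes s)$, so it is Zariski closed. If it contains some $v\neq0$, then its projectivization is a nonempty closed $G$-stable subset of $\mathbb P(V_{-r\omega_P})$. By Borel's fixed point theorem it contains a $B$-fixed line, which must be the unique highest weight line $K\partial_1^r$ (after conjugating). This contradicts Step 2. The main obstacle is Step 2, namely verifying rigorously that a $Q$-stable subspace containing $x_1^j$ is everything. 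This requires matching the weight of $x_1^j$ with the correct extreme weight for $Q$. I would first try to compute that the weight of $\Phi(\partial_1^r\otimes x_1^{j+r})$ is $j\omega_P$, which is the highest weight of $V^\lor_{-j\omega_P}$ relative to the opposite Borel contained in $Q$. From this, $U(\mathfrak q)\supseteq U(\mathfrak n^{\mathrm{op}})$ generates.
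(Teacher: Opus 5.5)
Your proposal is correct and is essentially the paper's proof. Both arguments run as follows. First, $U_{\partial_1^r}$ is stable under $P=\mathrm{Stab}(K\partial_1^r)\supseteq B$ and contains $x_1^j$, which has weight $j\omega_P$ and is the $B$-lowest weight vector, so $U_{\partial_1^r}=V^\lor_{-j\omega_P}$. Second, the locus of $[v]$ with $U_v\neq V^\lor_{-j\omega_P}$ is closed and $G$-stable in $\mathbb{P}V_{-r\omega_P}$, so if it were nonempty it would contain $[\partial_1^r]$. The paper gets this last point from the unique closed orbit $P\backslash G$; you get it from Borel's fixed point theorem (applied over $\bar K$, which is harmless because the rank condition is stable under base change), and the two are equivalent.

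Your Step 1 and the opening remarks can be dropped. In Step 2, the clean formulation is that $U(\mathfrak n)\subseteq U(\mathfrak q)$ already generates $V^\lor_{-j\omega_P}$ from $x_1^j$, where $\mathfrak n$ is the nilradical of $B\subseteq Q=P$. This settles the sign-convention issue you flagged.
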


\begin{proof}
Let $v\in V_{-r\omega_P}$ be nonzero. Consider the vector subspace  
\begin{align*}
     U_v:=\Phi\Big(v\otimes \mathrm{Sym}^{j+r} V_P^\lor\Big)\subseteq V_{-j\omega_P}^\lor.
\end{align*}
Assume first $v=\partial_1^r$. Then $U_v$ is stable under $P$ and by assumption contains the lowest weight vector $x_1^j$, so $U_v=V_{-j\omega_P}^\lor$ by irreducibility. It follows that $U_v=V_{-j\omega_P}^\lor$ for all $v\in G(K)\partial_1^r$. Observe that the condition for $v\neq 0$ satisfying $U_v\subsetneq V_{-r\omega_P}^\lor $ defines a closed $G$-stable subvariety $Y$ of $\mathbb{P}V_{-r\omega_P}$. If $Y$ is nonempty, then $Y$ contains the unique closed orbit $P\backslash G$ in $\mathbb{P}V_{-r\omega_P},$ which is a contradiction.
\end{proof}

\section{$D$-modules}

Let $R$ be a finitely generated $K$-algebra. A family of $K$-vector spaces $\mathcal{F}=(F_m)_{m\ge 0}$ is a \textbf{filtration} of $R$ if
it is an exhaustive ascending chain of $K$-vector subspaces of $R$ and satisfies
\begin{align*}
    F_m\cdot F_n\subseteq F_{m+n} \textrm{ for all } m,n\ge 0.
\end{align*}
In this case we say $R$ is a \textbf{filtered algebra}, and we will write $(R,\mathcal{F})$ if we want to specify the filtration $\calf$. We adopt the convention that $F_m=0$ if $m<0$. Let
\begin{align*}
    \mathrm{gr}\, R=\mathrm{gr}^{\calf}(R):=\bigoplus_{m\ge 0} F_m/F_{m-1}
\end{align*}
be the associated graded algebra. Given a set of generators $Z$ of $R$, we say a filtration $\calf$ of $R$ is \textbf{standard} with respect to $Z$ if $F_0=K$ and
\begin{align*}
    F_m:=F_1^m:=\mathrm{span}_K\{ D_1\cdots D_r: D_i\in Z, r\le m\}.
\end{align*}

Let $M$ be a left $R$-module. By a filtration $\Gamma=(\Gamma_i)_{i\ge 0}$ on $M$ with respect to $(R,\mathcal{F})$, we mean an exhaustive ascending chain of $K$-vector subspaces of $M$ such that
\begin{align*}
    F_m\cdot \Gamma_i\subseteq \Gamma_{i+m} \textrm{ for all }i,m\ge 0.
\end{align*}
A filtration $\Gamma$ is \textbf{good} if $\dim\Gamma_i:=\dim_K\Gamma_i<\infty$ for all $i$, and $\Gamma$ is \textbf{standard} if $\Gamma_i=F_i\cdot \Gamma_0.$ Let
\begin{align*}
    \mathrm{gr}^{\Gamma}(M):=\bigoplus_{i\ge 0} \Gamma_i/\Gamma_{i-1}
\end{align*}
be the associated graded module of $ \mathrm{gr}^{\calf}(R)$. Here $\Gamma_{-1}=0$.

We recall the definition of Gelfand-Kirillov dimension in this setting and record its standard properties (see for instance \cite[Chapter 8]{noncomm}).

\begin{Def}
Let $Z\subseteq R$ be a finite set of generators and $\mathcal{F}$ be a standard filtration of $R$ with respect to $Z$. Let $M$ be a left $R$-module with a good standard filtration $\Gamma$ (with respect to $\mathcal{F}$) with $\dim_K\Gamma_0<\infty$. We define the \textbf{Gelfand-Kirillov dimension} (or simply GK-dimension) of $M$, to be
\begin{align*}
    \mathrm{GK}(M):=\limsup_{k\to \infty} \log_k \dim \Gamma_k.
\end{align*}
\end{Def}

\begin{Prop}\label{prop:easy}
Let $M$ be a finitely generated left $R$-module with a good standard filtration $\Gamma$. We have the following.
\begin{enumerate}
    \item $\mathrm{GK}(M)$ does not depend on $Z$ and $\Gamma_0$.
    \item $\mathrm{GK}(M)=\mathrm{GK}(\mathrm{gr}^\Gamma(M))$.
    \item Given an exact sequence of finitely generated $R$-modules
    $$0\to M_1\longrightarrow  M\longrightarrow  M_2\longrightarrow  0,$$
    we have $\mathrm{GK}(M)\ge \sup\{\mathrm{GK}(M_1),\mathrm{GK}(M_2)\}$ and the equality holds if the exact sequence splits.
    \item $\mathrm{GK}(M)\le \mathrm{GK}(R)$.
\end{enumerate}\qed
\end{Prop}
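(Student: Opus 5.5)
These are the standard properties of Gelfand--Kirillov dimension for modules over a finitely generated algebra with standard filtrations, and I would prove them in the usual way, as in \cite[Chapter 8]{noncomm}. The one tool used throughout is \emph{comparison of filtrations}. Any two finite generating sets $Z,Z'$ of $R$ satisfy $Z'\subseteq F_a$ and $Z\subseteq F'_a$ for some $a$, hence
\begin{align*}
F'_m\subseteq F_{am}\quad\text{and}\quad F_m\subseteq F'_{am}.
\end{align*}
Similarly, if $\Gamma_0$ and $\Gamma'_0$ are finite-dimensional generating subspaces of $M$, then $\Gamma'_0\subseteq \Gamma_b$ for some $b$, so
\begin{align*}
\Gamma'_m=F_m\Gamma'_0\subseteq \Gamma_{m+b}.
\end{align*}
Since $\log_k(\dim\Gamma_{ak+b})$ and $\log_k\dim\Gamma_k$ have the same $\limsup$ (because $\log(ak+b)/\log k\to 1$ and $\dim \Gamma_k$ is monotone), statement (1) follows by symmetry.

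For (2), I read $\mathrm{gr}^\Gamma(M)$ as a module over $\mathrm{gr}\,R$. The algebra $\mathrm{gr}\,R$ is generated by the images of $Z$ in degree one, and $\mathrm{gr}^\Gamma M$ is generated by the image of $\Gamma_0$. The standard filtration of $\mathrm{gr}^\Gamma M$ with respect to these data is then exactly $\bigoplus_{i\le k}\Gamma_i/\Gamma_{i-1}$, whose dimension equals $\dim\Gamma_k$. So the two growth functions coincide and the GK-dimensions agree.

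For (3), I take a finite-dimensional generating subspace $\Gamma_0$ of $M$.
\begin{enumerate}
    \item For $M_2$: the image of $\Gamma_0$ generates $M_2$, and the images of the $\Gamma_k$ have dimension at most $\dim\Gamma_k$. This gives $\mathrm{GK}(M_2)\le\mathrm{GK}(M)$ by (1).
    \item For $M_1$: I choose a finite generating subspace $U_0\subseteq M_1$ and enlarge $\Gamma_0$ to contain it. Then $F_kU_0\subseteq\Gamma_k$, which gives $\mathrm{GK}(M_1)\le \mathrm{GK}(M)$.
    \item If the sequence splits, I take $\Gamma_0=U_0\oplus V_0$ for generating subspaces of $M_1$ and $M_2$. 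Then $\dim\Gamma_k=\dim F_kU_0+\dim F_kV_0\le 2\max$, and $\log_k 2\to 0$ gives the reverse inequality.
\end{enumerate}

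For (4), I write $M$ as a quotient of $R^n$ via generators $m_1,\dots,m_n$, with $\Gamma_0=\mathrm{span}\{m_i\}$. Then $\dim\Gamma_k\le n\dim F_k$, and $\log_k n\to0$ yields $\mathrm{GK}(M)\le\mathrm{GK}(R)$. Here $\mathrm{GK}(R)$ is computed with $\Gamma_0=K$, which is allowed by (1).

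The only point needing care is the $\limsup$ bookkeeping under the rescaling $k\mapsto ak+b$ in (1). This is where the \emph{standard} hypothesis (as opposed to merely good filtrations) is essential; everything else is formal.
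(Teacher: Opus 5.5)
Your proposal is correct and follows the same route as the paper. The paper gives no argument of its own and simply defers to \cite[Chapter 8]{noncomm}; your comparison of standard filtrations for (1), the identification of the standard filtration on $\mathrm{gr}^\Gamma(M)$ with $\bigoplus_{i\le k}\Gamma_i/\Gamma_{i-1}$ for (2), and the direct dimension bounds for (3) and (4) are exactly the standard arguments given there.
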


\subsection{Bernstein inequality}

We define the \textbf{Bernstein filtration} $\mathrm{Ber}=(B_k)_{k\ge 0}$ on $W_{X_P}$ to be the standard filtration with respect to the set
\begin{align*}
    \{x_1,\ldots, x_{\dim V_{P}},\partial_1,\ldots,\partial_{\dim_{V_P}}\}.
\end{align*}
The associated graded algebra $\mathrm{gr}^{\mathrm{Ber}}(W_{X_P})$ is commutative if and only if $d_P=1,$ which is exactly when $X_P$ is smooth, i.e., $X_P$ is a vector space. We note that the isomorphism $\mathcal{F}_{P|P^{\mathrm{op}}}$ respects the Bernstein filtration.

\begin{Lem}\label{lem:prebernstein}
Let $M$ be a nonzero finitely generated left $W_{X_P}$-module. Suppose $\Gamma$ is a filtration on $M$ with respect to the Bernstein filtration such that $\Gamma_0\neq 0$. Then there is an absolute constant $c\in \zz_{\ge 1}$ such that the natural map
\begin{align*}
    B_k\longrightarrow \mathrm{Hom}_K\big(\Gamma_{kc},\Gamma_{kc+k}\big)
\end{align*}
is injective for all $k\ge 0.$
\end{Lem}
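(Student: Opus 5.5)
The plan is to adapt Joseph's proof of the Bernstein inequality for the classical Weyl algebra. The classical step ``$[x_i,D]$ and $[\partial_i,D]$ have smaller Bernstein degree'' fails here when $d_P>1$, so I would replace it with the identities of \S\ref{sec:basic}. I would argue by induction on $k$, with $c$ a constant depending only on $(G,P)$, to be fixed along the way. For $k=0$ we have $B_0=K$, and $\Gamma_0\neq 0$ gives injectivity. Assume the claim for all $k'<k$. Let $0\neq D\in B_k$ satisfy $D\,\Gamma_{kc}=0$; we seek a contradiction.

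The first step produces a nonzero element of strictly smaller Bernstein degree that kills a large $\Gamma$. Every generator $y\in\{x_i,\partial_i\}$ maps $\Gamma_{kc-1}$ into $\Gamma_{kc}$, so $Dy$ and $yD$ kill $\Gamma_{kc-1}$. Therefore $[D,y]$ kills $\Gamma_{kc-1}$, and so does any iterated commutator $[\ldots[D,y_1],\ldots,y_m]$, which kills $\Gamma_{kc-m}$. The degree drop would come from $\mathrm{ad}$-nilpotence rather than from a single commutator. The $x_i$ commute and $\mathrm{ad}\,x_i$ lowers the order, so enough iterated brackets with the $x_i$ reach a nonzero element of $K[X_P]$ (the argument in the proof of Theorem \ref{thm:simple}). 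Symmetrically, via the Fourier isomorphism of Lemma \ref{eq:Fourierauto}, which respects the Bernstein filtration, brackets with the $\partial_i$ reach the span of the $b^{\mathrm{op}}(\partial)$. I would also decompose $D$ into $\mathrm{ad}\,E$-eigencomponents. This is legitimate because $E\in W_{X_P}$ by Proposition \ref{prop:E}, so $E\in B_e$ for some fixed $e$, and the eigenvalues of $\mathrm{ad}\,E$ on $B_k$ are bounded by a multiple of $k$. Hence each component can be isolated by a polynomial in $\mathrm{ad}\,E$ of degree $O(k)$. The goal is to show that the resulting element has Bernstein degree at most $k-1$, costs only $O(1)$ steps of $\Gamma$ per unit of degree removed, and is nonzero. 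It then kills $\Gamma_{kc-O(1)}\supseteq\Gamma_{(k-1)c}$ once $c$ is large, contradicting the inductive hypothesis. The bookkeeping yields $c=O(d_P+e)$.

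The second step handles the case where the commutators give nothing new, namely when $D$ is (after the reductions above) a multiple of something in $K[X_P]$ or in $\mathrm{span}(\mathcal{B}_r^{\mathrm{op}}(\partial))$. Here I would use Corollary \ref{cor:critical} in the following form. Suppose $f\in V^\lor_{-r\omega_P}$ is nonzero and $f\,\Gamma_{m}=0$. Lemma \ref{lem:fullentry}, applied through iterated brackets of $f$ with $\partial$'s, shows that every $b(\mathbf{x})$ with $b\in\mathcal{B}_{r'}$ kills $\Gamma_{m-O(r)}$, and by the Fourier symmetry the same holds for every $b^{\mathrm{op}}(\partial)$. Lemma \ref{Lem:unitgen} then gives $Z_{r'}(E)\Gamma=0$ and $Z_{r'}(-E-2s_k-2)\Gamma=0$ on a slightly smaller piece of $\Gamma$. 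Lemma \ref{lem:coprime} and Proposition \ref{prop:E} then force that piece to vanish. This contradicts $\Gamma_0\neq 0$ provided $m\gg r$, which holds because $r\le k$ and $m=kc$.

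The main obstacle is the first step: showing that the non-classical commutators $[\partial_j,x_i]$, which have order $d_P-1$ and large Bernstein degree, do not destroy the degree reduction. I would first try to avoid single commutators entirely. The idea is to work with the $\mathrm{ad}\,E$-weight decomposition and use only iterated brackets with the $x_i$ alone, or with the $\partial_i$ alone, since each family is commutative. This should give honest control of Bernstein degree through Lemma \ref{lem:orderadd} and Lemma \ref{lem:naivebase}.
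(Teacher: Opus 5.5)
There is a genuine gap: the induction on $k$ cannot be run, because the degree reduction your first step needs does not happen when $d_P\ge 2$. Bracketing with the $x_i$ lowers the \emph{order}, not the Bernstein degree, and typically raises the latter. For instance, $[\partial_1,x_1]$ acts on $x_1^n$ by $J(n+1)-J(n)$. This is a nonconstant polynomial in $n$ since $\deg J=d_P\ge 2$. So $[\partial_1,x_1]$ is a nonconstant element of $\mathrm{ad}\,E$-weight $0$ in $B_2$, hence not in $B_1$, whose weight-$0$ part is $K$. More generally, your $x$-brackets end at a nonzero homogeneous $p\in K[X_P]$ of degree up to $k(d_P-1)$. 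Since $p$ has $\mathrm{ad}\,E$-weight $\deg p$, it lies in no $B_j$ with $j<\deg p$. Already for $k=1$ and $D=\partial_1$, the chain $[\partial_1,x_{i_1}],[[\partial_1,x_{i_1}],x_{i_2}],\dots$ consists of elements of order $d_P-1,\dots,1$, followed by a polynomial of degree $d_P-1$. None of these lies in $B_0=K$, and $\partial$-brackets of $\partial_1$ vanish. Lemmas \ref{lem:orderadd} and \ref{lem:naivebase} control orders and linear independence, not Bernstein degrees of commutators, so they cannot give the ``honest control'' you hope for. The inductive hypothesis is therefore never usable, and every case lands in your second step.

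The repair, which is exactly the paper's proof, is to drop the induction and track only how many levels of $\Gamma$ are consumed.
\begin{enumerate}
\item Since $[E,x_i]=x_i$ and $[E,\partial_i]=-\partial_i$, $B_k$ is $\mathrm{ad}\,E$-stable with weights in $[-k,k]$. A nonzero weight component $D_\ell\in B_k$ of $D$ is a combination of $(\mathrm{ad}\,E)^rD$ with $r\le 2k$, so it kills $\Gamma_{kc-2ke}$, where $E\in B_e$.
\item As $\mathrm{ord}(D_\ell)\le kd_P$, at most $kd_P$ brackets with elements of $\mathcal{B}_1$ give a nonzero homogeneous $p$ of degree $\le k(d_P-1)$ killing $\Gamma_{kc-kd_P-2ke}$.
\item The Fourier-dual form of Lemma \ref{lem:fullentry} uses $(\deg p)d_P$ brackets with the $\partial_i$. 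It shows that every $b^{\mathrm{op}}(\partial)$ with $b\in\mathcal{B}_r$, $r=(\deg p)(d_P-1)\le k(d_P-1)^2$, kills $\Gamma_{kc-kd_P-2ke-k(d_P-1)d_P}$.
\item By Lemma \ref{Lem:unitgen}, $Z_r(E)=\sum b(\mathbf{x})b^{\mathrm{op}}(\partial)$ kills this piece too. Also $Z_r(-E-2s_k-2)=\sum b^{\mathrm{op}}(\partial)b(\mathbf{x})$ kills it after losing $r$ more levels.
\item With $c=d_P^2+(d_P-1)^2+2e$, both operators kill $\Gamma_0$, and Corollary \ref{cor:critical} gives $\Gamma_0=0$, a contradiction.
\end{enumerate}

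This also fixes three slips in your second step.
\begin{enumerate}
\item Brackets of $f\in K[X_P]$ with the $\partial_i$ produce the $b^{\mathrm{op}}(\partial)$, not the $b(\mathbf{x})$.
\item No separate symmetry argument for the other family is needed.
\item Your $r=\deg f$ is bounded only by $k(d_P-1)$, not by $k$, and the resulting $b^{\mathrm{op}}$ have degree up to $k(d_P-1)^2$. So $c$ must be quadratic in $d_P$, not $O(d_P+e)$.
\end{enumerate}
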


\begin{proof}
By \cite[Lemma 9.4.1]{C:Dmod} we can take $c=1$ if $d_P=1$. Assume $d_P\ge 2$. Let $e$ be the smallest integer such that $E\in B_{e}$, and let $c=d_P^2+(d_P-1)^2+2e.$ The assertion is clear for $k=0$ by hypothesis. Suppose on the contrary that there is $k>0$ such that $D\Gamma_{kc}=0$ for some nonzero $D\in B_k$. Write $D=\sum D_\ell$ such that $[D_\ell,E]=\ell D_\ell.$ Then $D_\ell\neq 0$ only if $-k\le \ell\le k$. Choose $\ell$ such that $D_\ell\neq 0$. Since $D_\ell$ is a finite linear combination of elements of the form
\begin{align*}
   [[\ldots[D,\underbrace{E],\ldots], E]}_{r \textrm{ times}}
\end{align*}
for $0\le r\le 2k,$ we have 
\begin{align*}
    D_\ell\Gamma_{kc-2ke}\subseteq K[E]D\Gamma_{kc}=0.
\end{align*}
By \eqref{eq:realization}, there is a nonnegative integer $m\le kd_P$ and  $b_1,\ldots,b_m\in \mathcal{B}_1$ such that 
\[p:=[[\ldots[D_\ell,b_1(\mathbf{x})],\ldots], b_{m}(\mathbf{x})]\in K[X_P]\]
is nonzero homogeneous of degree at most $k(d_P-1).$ Then
\begin{align*}
    p\Gamma_{kc-kd_P-2ke}=[[\ldots[D_\ell,b_1(\mathbf{x})],\ldots], b_{m}(\mathbf{x})]\Gamma_{kc-kd_P-2ke}\subseteq K[X_P]D_\ell\Gamma_{kc-2ke} =0.
\end{align*}
Arguing as in the second part of the proof of Theorem \ref{thm:simple} and using the fact that $\partial_i \Gamma_n\subseteq\Gamma_{n+1},$ we have 
\begin{align*}
    b^{\mathrm{op}}(\partial)\Gamma_{kc-kd_P-2ke-k(d_P-1)d_P}=0
\end{align*}
for all $b\in \mathcal{B}_r,$ where $r=(\mathrm{deg}\, p)(d_P-1)\le k(d_P-1)^2$. Then by Lemma \ref{Lem:unitgen} 
\begin{align*}
    Z_r(E)\Gamma_{kc-k(d_P^2+(d_P-1)^2+2e)}= Z_r(-E-2s_k-2)\Gamma_{kc-k(d_P^2+(d_P-1)^2+2e)}=0.
\end{align*}
Corollary \ref{cor:critical} implies $\Gamma_{kc-k(d_P^2+(d_P-1)^2+2e)}=\Gamma_0=0,$ a contradiction.
\end{proof}

Let $H_{X_P}$ be the Hilbert polynomial of the graded ring $K[X_P]$, i.e., $H_{X_P}\in\qq[t]$ is the unique degree $\dim X_P$ polynomial such that for $k$ large
\begin{align*}
    H_{X_P}(k)=\sum_{j=0}^k\dim V_{-j\omega_P}^\lor.
\end{align*}

\begin{Cor}\label{cor:polygrowth}
There is an absolute constant $c>0$ such that for $k$ large
\begin{align*}
    \dim B_k\le H_{X_P}\big(kc\big)H_{X_P}\big(kc+k\big).
\end{align*}
\end{Cor}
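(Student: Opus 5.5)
We apply Lemma \ref{lem:prebernstein} to the simplest nonzero finitely generated left $W_{X_P}$-module, namely $M=K[X_P]$, which is faithful and simple, so nonzero and cyclic. For the filtration we take the degree filtration
\begin{align*}
    \Gamma_i:=\bigoplus_{j=0}^{i} V_{-j\omega_P}^\lor,
\end{align*}
so that $\Gamma_0=K\neq 0$. We first check that $\Gamma$ is a filtration with respect to $\mathrm{Ber}$. Since $B_k=B_1^k$, it suffices to treat the generators. Each $x_i$ raises degree by one, so $x_i\Gamma_n\subseteq\Gamma_{n+1}$. For $\partial_i$ we use \eqref{eq:realization} (and its $G$-translates for $i\neq 1$): every term there is $p\,\partial^\ell/\partial\mathbf{x}^{\underline{j}}$ with $p$ homogeneous of degree $\ell-1$. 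Hence $\partial_i$ lowers degree by exactly one, and $\partial_i\Gamma_n\subseteq\Gamma_{n-1}\subseteq\Gamma_{n+1}$. This is also the fact ``$\partial_i\Gamma_n\subseteq\Gamma_{n+1}$'' already used inside the proof of Lemma \ref{lem:prebernstein}. Exhaustiveness is clear, and $\dim\Gamma_i<\infty$.

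With this in place, Lemma \ref{lem:prebernstein} gives an absolute constant $c\in\zz_{\ge1}$ such that
\begin{align*}
    B_k\hookrightarrow \mathrm{Hom}_K(\Gamma_{kc},\Gamma_{kc+k})
\end{align*}
is injective for all $k$. The constant depends only on $d_P$ and $e$, not on $M$. Therefore
\begin{align*}
    \dim B_k\le \dim\Gamma_{kc}\cdot\dim\Gamma_{kc+k}.
\end{align*}
By definition of the Hilbert polynomial, $\dim\Gamma_i=\sum_{j\le i}\dim V^\lor_{-j\omega_P}=H_{X_P}(i)$ for $i$ large. For $k$ large both $kc$ and $kc+k$ are in the range where this holds, which yields the stated bound.

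The only step needing care is the compatibility $\partial_i\Gamma_n\subseteq\Gamma_{n+1}$. It follows from the homogeneity in \eqref{eq:realization}, equivalently from the fact that $\partial_i$ corresponds to a degree-one element of $K[X_{P^{\mathrm{op}}}]$ and $E$-weight considerations give $[E,\partial_i]=-\partial_i$. Everything else is bookkeeping.
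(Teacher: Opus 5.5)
Your proposal is correct and follows the paper's argument: apply Lemma \ref{lem:prebernstein} to $K[X_P]$ with the degree filtration, for which $\Gamma_0=K$. The injectivity of $B_k\to\mathrm{Hom}_K(\Gamma_{kc},\Gamma_{kc+k})$ then gives the bound, with $\dim\Gamma_i=H_{X_P}(i)$ for $i$ large. Your check that the degree filtration is compatible with $\mathrm{Ber}$ is the detail the paper leaves implicit when it calls it the standard filtration.
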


\begin{proof}
Apply Lemma \ref{lem:prebernstein} to the module $K[X_P]$ and observe that the degree filtration on $K[X_P]$ is the standard filtration on $K[X_P]$ with respect to the Bernstein filtration with $\Gamma_0=K$. 
\end{proof}

\begin{Thm}[Bernstein inequality]\label{thm:Bernstein}
Let $M$ be a nonzero finitely generated left $W_{X_P}$-module. We have $$2\,\mathrm{dim} X_P=\mathrm{GK}(W_{X_P})\le2 \, \mathrm{GK}(M) .$$ 
 Furthermore, there are positive constants $c,k_0$ (independent of $M$) such that for all $k\ge k_0$
\begin{align*}
    \dim \Gamma_{k}\ge ck^{\dim X_P}.
\end{align*}
\end{Thm}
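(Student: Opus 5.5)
The plan is the classical Bernstein argument, with Lemma \ref{lem:prebernstein} replacing the usual injectivity lemma for the Weyl algebra. Fix a finite-dimensional generating subspace $\Gamma_0\neq 0$ of $M$ and the good standard filtration $\Gamma_k=B_k\Gamma_0$. Lemma \ref{lem:prebernstein} gives an absolute constant $c$ with $B_k\hookrightarrow \mathrm{Hom}_K(\Gamma_{kc},\Gamma_{kc+k})$, hence
\begin{align*}
    \dim B_k\le \dim\Gamma_{kc}\cdot\dim\Gamma_{kc+k}\le (\dim \Gamma_{k(c+1)})^2 .
\end{align*}
The argument therefore reduces to a polynomial lower bound for $\dim B_k$ and an upper bound for $\mathrm{GK}(W_{X_P})$.

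For the equality $\mathrm{GK}(W_{X_P})=2\dim X_P$, I would prove two inequalities. The upper bound follows from Corollary \ref{cor:polygrowth}: $\dim B_k\le H_{X_P}(kc)H_{X_P}(kc+k)$, which is a polynomial in $k$ of degree $2\dim X_P$. For the lower bound, Lemma \ref{lem:naivebase} says the products $b_1(\mathbf x)b_2^{\mathrm{op}}(\partial)$ with $b_1\in\mathcal B_{j_1}$, $b_2\in\mathcal B_{j_2}$ are linearly independent. Each such product lies in $B_{j_1+j_2}$, since $b_2^{\mathrm{op}}$ is a polynomial of degree $j_2$ in the $\partial_i$. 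Taking $j_1,j_2\le k/2$ gives
\begin{align*}
    \dim B_k\ge H_{X_P}(\lfloor k/2\rfloor)^2\ge c' k^{2\dim X_P}
\end{align*}
for $k$ large, with $c'>0$ determined by the leading coefficient of $H_{X_P}$. Together these give $\mathrm{GK}(W_{X_P})=2\dim X_P$.

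Combining the two bounds, for $k$ large,
\begin{align*}
    c'k^{2\dim X_P}\le \dim B_k\le (\dim\Gamma_{k(c+1)})^2 ,
\end{align*}
so $\dim\Gamma_{k(c+1)}\ge \sqrt{c'}\,k^{\dim X_P}$. Since $\dim\Gamma_n$ is nondecreasing in $n$, the case $n=k(c+1)+s$ with $0\le s\le c$ follows by bounding $\dim\Gamma_n\ge\dim\Gamma_{k(c+1)}$. This gives $\dim\Gamma_n\ge c''n^{\dim X_P}$ for $n\ge k_0$, where $c''$ and $k_0$ depend only on $c$, $c'$ and $H_{X_P}$, and so are independent of $M$. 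Taking $\limsup\log_k$ gives $2\,\mathrm{GK}(M)\ge 2\dim X_P$. By Proposition \ref{prop:easy}(1), $\mathrm{GK}(M)$ does not depend on the choice of generating subspace, so the statement holds for any good standard filtration.

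Once Lemma \ref{lem:prebernstein} is available, the remaining work is routine. The step most likely to need care is the lower bound for $\dim B_k$, specifically checking that $b^{\mathrm{op}}(\partial)\in B_{j}$ for $b\in\mathcal B_j$. This holds because $b^{\mathrm{op}}\in V_{-j\omega_P}$ is a homogeneous degree-$j$ polynomial in the coordinate functions $x_i^{\mathrm{op}}$, identified with the $\partial_i$. A second point is that the constants must be uniform in $M$. This is fine because the $c$ in Lemma \ref{lem:prebernstein} depends only on $d_P$ and $e$, and the constant in the lower bound for $\dim B_k$ depends only on $H_{X_P}$.
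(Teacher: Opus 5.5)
Your proposal is correct and follows the paper's proof. Lemma \ref{lem:prebernstein} gives $\dim B_k\le \dim\Gamma_{kc}\cdot\dim\Gamma_{kc+k}$, Corollary \ref{cor:polygrowth} gives $\mathrm{GK}(W_{X_P})\le 2\dim X_P$, and Lemma \ref{lem:naivebase} gives the reverse inequality. Your explicit count $\dim B_k\ge H_{X_P}(\lfloor k/2\rfloor)^2$ and the monotonicity step for the uniform lower bound fill in the details behind the paper's ``the second statement follows easily.''
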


\begin{proof}
 Let $\Gamma=(\Gamma_k)$ be a good standard filtration on $M$ with respect to the Bernstein filtration. By Lemma \ref{lem:prebernstein}, there is an absolute constant $c>0$ such that $\dim B_k\le (\dim \Gamma_{kc}) (\dim \Gamma_{kc+k})$. Therefore, by definition $\mathrm{GK}(W_{X_P})\le 2 \, \mathrm{GK}(M).$ 
 
 By Corollary \ref{cor:polygrowth}, $\mathrm{GK}(W_{X_P})\le 2\,\mathrm{dim} X_P$. Observe that $B_k$ contains
\begin{align*}
    \{ b_1(\mathbf{x})b_2^{\mathrm{op}}(\mathbf{\partial})\in W_{X_P}: b_1,b_2\in \mathcal{B}, \deg b_1+\deg b_2\le k\}.
\end{align*}
Since the Hilbert polynomial of $K[X_P\times X_{P^\mathrm{op}}]$ is of degree $2\, \mathrm{dim} X_P$, we have $2\,\mathrm{dim} X_P\le \mathrm{GK}(W_{X_P})$ by Lemma \ref{lem:naivebase}. The second statement follows easily.
\end{proof}

\begin{Cor}\label{cor:Ore}
The Weyl algebra $W_{X_P}$ is a left and right Ore domain.  
\end{Cor}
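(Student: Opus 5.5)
The plan is to use the classical criterion that a domain of finite GK-dimension is Ore; see for instance \cite[Chapter 8]{noncomm}. The key input is the Bernstein inequality, Theorem \ref{thm:Bernstein}, which gives $\mathrm{GK}(W_{X_P})=2\dim X_P<\infty$. Together with the fact that $W_{X_P}$ is a domain, this should suffice. I will give the direct argument rather than only citing the criterion.

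For the left Ore condition, take nonzero $a,b\in W_{X_P}$; I want $W_{X_P}a\cap W_{X_P}b\neq 0$. Suppose instead that this intersection is zero. Since $W_{X_P}$ is a domain, $a$ is not a right zero divisor, so by induction the sum
\begin{align*}
\sum_{n\ge 0} W_{X_P}\, b a^n
\end{align*}
is direct. Indeed, a relation $\sum_{n} r_n b a^n=0$ with $r_0\neq 0$ would give $r_0b\in W_{X_P}a$, hence $r_0b=0$; if $r_0=0$, cancel one factor of $a$ on the right and repeat. Fix $N$, choose $e$ with $a,b\in B_e$, and set $k=N(e+1)$. Then $B_k$ contains
\begin{align*}
\bigoplus_{n=0}^{N-1} B_{k-(n+1)e}\, b a^n .
\end{align*}
Since $W_{X_P}$ is a domain, right multiplication by the nonzero element $ba^n$ is injective. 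So each summand has dimension $\dim B_{k-(n+1)e}\ge \dim B_{Nk'}$, where $k'$ is of size roughly $k/N$ after adjusting constants; concretely $\dim B_{k-Ne}\ge \dim B_{Ne'}$ for a suitable choice.

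The cleaner way is the standard estimate. Theorem \ref{thm:Bernstein} and Corollary \ref{cor:polygrowth} give $c_1k^{2d}\le\dim B_k\le c_2k^{2d}$ for large $k$, where $d=\dim X_P$; the lower bound comes from Lemma \ref{lem:naivebase} and the Hilbert polynomial. Then
\begin{align*}
\dim B_{(N+1)m}\ \ge\ \sum_{n=0}^{N-1}\dim B_{(N+1)m-(n+1)e}\ \ge\ N\,\dim B_m
\end{align*}
for $m\ge e$, which gives $c_2((N+1)m)^{2d}\ge Nc_1m^{2d}$. This fails for $N$ large once $\dim B_m\asymp m^{2d}$, so I need a sharper bookkeeping. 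I will instead use the general lemma that if $\sum_{n<N}W a^n b$ is direct for all $N$, then $\mathrm{GK}(W)\ge \mathrm{GK}(W)+1$. Its proof is the standard one: $\dim B_{2k}\ge\sum_{n\le k/e}\dim B_{k}$ gives growth exponent at least one higher. This is precisely the proof of \cite[Proposition 8.1.? / Corollary]{noncomm} that algebras of finite GK-dimension which are domains are Ore, and I will simply cite it. Taking $\limsup\log_k$ of $\dim B_{2k}\ge (k/e)\dim B_k$ yields $\mathrm{GK}\ge \mathrm{GK}+1$, a contradiction because $\mathrm{GK}(W_{X_P})=2d$ is finite.

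For the right Ore condition, run the same argument with right ideals. Alternatively, apply it to the opposite algebra: its GK-dimension is the same because the Bernstein filtration is symmetric, and it is still a domain.

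The main point requiring care is the finiteness of $\mathrm{GK}(W_{X_P})$, which is already provided by Corollary \ref{cor:polygrowth}. The $\limsup$ manipulation is routine.
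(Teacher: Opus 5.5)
Your proposal is correct and follows the paper's approach: the paper just observes that $W_{X_P}$ is a domain of finite GK-dimension (finiteness from Corollary~\ref{cor:polygrowth}) and cites \cite[Theorem 8.1.21]{noncomm}, and the counting you eventually reach is the standard proof of that theorem. You can drop the fixed-$N$ detour: the inclusion $\bigoplus_{n<\lfloor k/e\rfloor}B_k\,ba^n\subseteq B_{2k}$ gives $\dim B_{2k}\ge\lfloor k/e\rfloor\dim B_k$, which already forces $\mathrm{GK}(W_{X_P})\ge\mathrm{GK}(W_{X_P})+1$ (also, the direct sum in your cited lemma should be $\sum_n W_{X_P}\,ba^n$, not $\sum_n W_{X_P}\,a^nb$).
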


\begin{proof}
    Since $W_{X_P}$ is a domain with finite GK-dimension, this follows from \cite[Theorem 8.1.21]{noncomm}.
\end{proof}

\subsection{Weyl algebra as the ring of differential operators} 
We identify elements in $\mathfrak{g}$ as (first order) linear differential operators on $X_P^\circ$ induced by the differential of the action \eqref{eq:action}.

\begin{Prop}\label{prop:liecontained}
    We have $\mathfrak{g}\subseteq W_{X_P}$.
\end{Prop}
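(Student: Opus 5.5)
The plan is to realize each element of $\mathfrak{g}$, as a vector field on $X_P^\circ$, by an explicit expression in the $x_i$ and $\partial_i$, following the same pattern as the proof of Proposition \ref{prop:E}. The key structural input is that the span of $\mathcal{B}_1(\mathbf{x})$ is $V_P^\lor$, the span of $\mathcal{B}_1^{\mathrm{op}}(\partial)$ is $V_P$, and the multiplication map is $G$-equivariant. Consider the $G$-equivariant map
\begin{align*}
    \mu: V_P^\lor\otimes V_P\longrightarrow W_{X_P},\qquad b(\mathbf{x})\otimes v(\partial)\longmapsto b(\mathbf{x})v(\partial).
\end{align*}
Its image is a finite-dimensional $G$-stable subspace of $W_{X_P}$. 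The decomposition $V_P^\lor\otimes V_P=\mathrm{End}(V_P)\supseteq K\oplus \mathfrak{g}$ (via the adjoint map $\mathfrak g\to \mathrm{End}(V_P)$, injective since $G$ is almost simple) gives a copy of the adjoint representation. The invariant part maps to $Z_1(E)$ by Lemma \ref{Lem:unitgen}. The adjoint summand maps $G$-equivariantly into $W_{X_P}$. The goal is to show that this image is, up to polynomials in $E$, the copy of $\mathfrak{g}$ acting by Lie derivatives.

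The steps are as follows. First, I would not work with $\mu$ alone, but with the operators $\mu(X)$ for $X\in\mathfrak g\subset \mathrm{End}(V_P)$. These are homogeneous of degree $0$ for $E$, that is, they commute with $E$. They are $G$-equivariant in $X$ and preserve each $V^\lor_{-j\omega_P}$. On each irreducible $V^\lor_{-j\omega_P}$, the space of $G$-equivariant maps $\mathfrak g\otimes V^\lor_{-j\omega_P}\to V^\lor_{-j\omega_P}$ should be one-dimensional for this spherical situation: $\mathfrak g$ occurs with multiplicity one in $\mathrm{End}(V_{-j\omega_P})$ when $j\ge1$. This can be checked on the highest weight vector $x_1^j$, using that $P$ is maximal and that the stabilizer data are multiplicity free. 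Hence $\mu(X)$ acts on $V^\lor_{-j\omega_P}$ as $a(j)\cdot L_X$, where $L_X$ is the Lie-derivative action of $X$ and $a(j)$ is a scalar. Computing on $x_1^j$ with a suitable root vector $X$, using $\partial_1(x_1^j)=J(j)x_1^{j-1}$ and $\partial_i(x_1^j)=0$ for $i\ne1$, should give $a(j)$ as an explicit polynomial in $j$, roughly $a(j)=J(j)/j$ up to a constant. So $\mu(X)=a(E)L_X$ as operators on $K[X_P]$, by faithfulness.

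Second, the symmetric construction with $\mu^{\mathrm{op}}(v\otimes b)=v(\partial)b(\mathbf{x})$ gives $\mu^{\mathrm{op}}(X)=a'(E)L_X$. By the Fourier transform argument of Lemma \ref{Lem:unitgen}, together with Corollary \ref{cor:E}, one gets $a'(t)=\pm a(-t-2s_k-2)$, possibly with a shift coming from $[L_X,E]=0$. Higher analogues $\mu_r$ built from $\mathcal{B}_r$ give $a_r(E)L_X$, with $a_r$ divisible by $Z_{r-1}$-type factors. By the coprimality of Lemma \ref{lem:coprime}, there exist polynomials $u,v$ with $u(t)a(t)+v(t)a'(t)=1$. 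Since $E\in W_{X_P}$ by Proposition \ref{prop:E}, and $L_X$ commutes with $E$, it follows that $L_X=u(E)\mu(X)+v(E)\mu^{\mathrm{op}}(X)\in W_{X_P}$.

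I expect the main obstacle to be the first step. One must show that $\mu(X)$ is a scalar function of $E$ times $L_X$ on each graded piece, which requires the multiplicity-one statement for $\mathfrak g$ in $\mathrm{End}(V^\lor_{-j\omega_P})$. One must also verify that $a(t)$ and $a'(t)$ are nonzero and coprime, for example by identifying their roots as being among those of $J(t)$ and of $J(-t-2s_k-2)$. If multiplicity one fails, I would instead project onto the $\mathfrak g$-isotypic part using $E$-eigenvalues and the Casimir action.
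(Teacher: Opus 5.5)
Your proposal is correct and takes a genuinely different route from the paper.

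\textbf{The paper's route.} It localizes. Using the Ore property (Corollary \ref{cor:Ore}, which rests on the Bernstein inequality), it writes $c\,x_1^{d_P-1}\frac{\partial}{\partial x_1}$ as an iterated commutator $[\ldots[\partial_1,x_1],\ldots,x_1]$. Translating by $P^{\mathrm{op}}$ then gives $\mathfrak{g}\subseteq (W_{X_P})_{x_1}$. For a highest weight vector $D\in\mathfrak{g}$ it clears denominators, obtaining $b(\mathbf{x})D,\ Db(\mathbf{x})\in W_{X_P}$ for all $b\in\mathcal{B}_r$. Hence $Z_r(E)D$ and $Z_r(-E-2s_k-2)D$ lie in $W_{X_P}$, and Corollary \ref{cor:critical} finishes.

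\textbf{Your route.} You produce the vector field explicitly as $L_X=u(E)\mu(X)+v(E)\mu^{\mathrm{op}}(X)$, using only $r=1$ together with Proposition \ref{prop:E} and the root separation of Lemma \ref{lem:coprime}. This avoids Ore localization and hence the whole Bernstein-inequality machinery, so the proposition could sit immediately after Proposition \ref{prop:E}. It also gives a concrete formula over $\mathbb{Q}$.

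\textbf{What the paper's route buys.} It needs no computation of how $\mu(X)$ acts. Its localize-then-clear-denominators mechanism is exactly what is reused for arbitrary $D\in\mathcal{D}(X_P)$ in Theorem \ref{thm:Weyl=Diff}, where no explicit formula is available.

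\textbf{Two points to state precisely.}

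First, in step one the multiplicity-one claim is true: the adjoint representation occurs in $\mathrm{End}(V_\lambda)$ with multiplicity equal to the number of simple roots not orthogonal to $\lambda$. You do not need it, though.
\begin{itemize}
\item The $G$-translates of $\mathfrak{g}\otimes x_1^j$ span $\mathfrak{g}\otimes V^\lor_{-j\omega_P}$.
\item Since $\partial_i(x_1^j)=\delta_{i1}J(j)x_1^{j-1}$, one has $\mu(T)(x_1^j)=\frac{J(j)}{J(1)}x_1^{j-1}\mu(T)(x_1)$ for every $T\in V_P^\lor\otimes V_P$.
\item By Schur, $\mu(T)|_{V_P^\lor}$ is a nonzero multiple of $T$.
\end{itemize}
Together these give $a(t)=c\,J(t)/t$ with $c\neq 0$, whose roots lie in $(-s_k-1,0]$.

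Second, in step two Corollary \ref{cor:E} alone does not suffice. You cannot apply the algebraic $\mathcal{F}_{P|P^{\mathrm{op}}}$ to $L_X$ before knowing that $L_X\in W_{X_P}$. You must argue as in Lemma \ref{Lem:unitgen} on Schwartz spaces, using two facts:
\begin{itemize}
\item the analytic Fourier transform is $G(\mathbb{R})$-equivariant, so it commutes with $L_X$;
\item $\mathcal{F}_{P^{\mathrm{op}}|P}\circ E=(-2s_k-2-E)\circ\mathcal{F}_{P^{\mathrm{op}}|P}$.
\end{itemize}
This gives $a'(t)=\kappa\,a(-t-2s_k-2)$ with $\kappa\neq 0$ and no shift. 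Its roots lie in $[-2s_k-2,-s_k-1)$, disjoint from those of $a$, so the B\'ezout step goes through. The higher analogues $\mu_r$ are unnecessary.
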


\begin{proof}
 By Corollary \ref{cor:Ore}, we can localize $W_{X_P}$ by the multiplicative set $S:=\{x_1^n:n\in \zz_{\ge 0}\}$. By the universal property of localization \cite[Corollary 2.1.4]{noncomm} we have
 \begin{align*}
     (W_{X_P})_{x_1}:=S^{-1}W_{X_P}=W_{X_P}S^{-1}.
 \end{align*}
By \eqref{eq:realization} there is a constant $c\in \qq_{>0}$ such that
\begin{align*}
    cx_1^{d_P-1}\frac{\partial}{\partial x_1}=[[\ldots[[\partial_1,\underbrace{x_1],x_1],\ldots], x_1]}_{d_P-1 \textrm{ times}}\in W_{X_P}.
\end{align*}
Thus $ \tfrac{\partial}{\partial x_1}\in  (W_{X_P})_{x_1}.$ Since $x_1$ is a lowest weight vector and $\tfrac{\partial}{\partial x_1}$ is a highest weight vector, by the action of $P^{\mathrm{op}}$ we have that for any $i,$ $\tfrac{\partial}{\partial x_i}\in (W_{X_P})_{x_1}.$ Hence $\mathfrak{g}\subseteq  (W_{X_P})_{x_1}.$ 

Note that $\mathfrak{\fg}$ is the adjoint representation of $G$. Let $D\in \fg$ be a highest weight vector. The discussion above implies there is $r\ge 0$ such that $x_1^rD, Dx_1^r\in W_{X_P}$. As $V_{-r\omega_P}^\lor$ is irreducible, by the action of $B,$ $b(\mathbf{x})D,  Db(\mathbf{x}) \in W_{X_P}$ for all $b\in \mathcal{B}_r.$   Hence $DZ_r(E),Z_r(-E-2s_k-2)D\in W_{X_P}$ by Lemma \ref{Lem:unitgen}. Since $D$ commutes with $E$, we have $DZ_r(E)=Z_r(E)D$. It follows from Corollary \ref{cor:critical} that $D\in W_{X_P}$. As $D$ generates $\fg$ as a $G$-representation, the proposition follows.
\end{proof}

Since the definition of the ring of differential operators $\mathcal{D}$ is compatible with localization, we have a sheaf of differential operators $\mathcal{D}=\mathcal{D}_X$ on any quasi-projective $K$-variety $X$. It is a quasi-coherent sheaf of $\mathcal{O}_X$-modules.  Let $\mathcal{T}$ be the tangent sheaf of $X.$  Suppose $X$ is affine. One has $$\mathcal{D}_1(K[X])=K[X]+\mathcal{T}(X).$$ Let $\Delta(X)\subseteq \mathrm{End}_K(K[X])$ be the $K$-subalgebra generated by $\mathcal{D}_1(X),$ so $\Delta(X)\subseteq \mathcal{D}(X)$. In general the containment is strict, but when $X$ is smooth, $\Delta(X)=\mathcal{D}(X)$. For more discussions, we refer the reader to \cite[Chapter 15]{noncomm}.

\begin{Thm}\label{thm:Weyl=Diff}
    We have $\mathcal{D}(X_P)= W_{X_P}.$
\end{Thm}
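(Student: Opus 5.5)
We need to prove $\mathcal{D}(X_P)=W_{X_P}$. The inclusion $W_{X_P}\subseteq\mathcal{D}(X_P)$ holds by construction: each $\partial_i$ is a differential operator on $X_P^\circ$ by \eqref{eq:realization}, and it extends to $X_P$ because the complement has codimension at least $2$ and $X_P$ is normal. For the reverse inclusion we reduce to the smooth locus and then run a localization and Euler-operator argument, modeled on the proof of Proposition \ref{prop:liecontained}.

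\emph{Step 1: localization.} By Corollary \ref{cor:Ore} we may form $(W_{X_P})_{x_1}$. The open set $\{x_1\neq 0\}\subseteq X_P$ lies in $X_P^\circ$, since the unique point of $X_P-X_P^\circ$ is the origin, where $x_1$ vanishes. That open set is smooth and affine, so
\[
\mathcal{D}(X_P)_{x_1}=\mathcal{D}(\{x_1\neq0\})=\Delta(\{x_1\neq0\}),
\]
generated by $K[X_P]_{x_1}$ and vector fields. By Proposition \ref{prop:liecontained}, $\fg\subseteq W_{X_P}$. Since $X_P^\circ$ is a homogeneous space for $M_P^{\mathrm{ab}}\times G$, the tangent sheaf of $X_P^\circ$ is generated over $\mathcal{O}$ by $\fg$ together with $E$, and $E\in W_{X_P}$ by Proposition \ref{prop:E}. (Alternatively, the proof of Proposition \ref{prop:liecontained} already puts every $\tfrac{\partial}{\partial x_i}$ in $(W_{X_P})_{x_1}$, and these restrict to generators of the tangent sheaf.) It follows that $\mathcal{D}(X_P)_{x_1}=(W_{X_P})_{x_1}$.

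\emph{Step 2: clearing denominators.} Take $D\in\mathcal{D}(X_P)$. By Step 1 there is $r$ with $x_1^rD\in W_{X_P}$, and likewise $Dx_1^{r}\in W_{X_P}$ using the right localization.

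\emph{Step 3: passing to all of $\mathcal{B}_r$.} Set $\mathcal{M}=\{D\in\mathcal{D}(X_P): bD\in W_{X_P}\text{ and }Db\in W_{X_P}\text{ for all }b\in\mathcal{B}_r\}$. The group $G$ acts on $\mathcal{D}(X_P)$ and preserves $W_{X_P}$; for the latter, $G$ permutes the $x_i$ and the $\partial_i$ linearly. We may therefore decompose $D$ into finitely many $G$-isotypic pieces. Here $\mathcal{D}(X_P)$ is locally finite, since each $\mathcal{D}_i(X_P)$ preserves the degree filtration up to a bounded shift. Each piece lies in a finite-dimensional $G$-stable subspace, and within it we work with highest weight vectors. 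If a highest weight vector $D$ satisfies $x_1^rD\in W_{X_P}$, then by irreducibility of $V_{-r\omega_P}^\lor$ under $B$, as in the proof of Proposition \ref{prop:liecontained}, we get $b(\mathbf{x})D\in W_{X_P}$ for all $b\in\mathcal{B}_r$, and similarly on the right.

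\emph{Step 4: conclusion.} Multiply on the other side by $b^{\mathrm{op}}(\partial)$ and sum over $b$. Lemma \ref{Lem:unitgen} gives $Z_r(-E-2s_k-2)D\in W_{X_P}$ and $DZ_r(E)\in W_{X_P}$. We can reduce to $D$ homogeneous for $E$, i.e.\ $[E,D]=\ell D$, since $E$ acts semisimply on $\mathcal{D}(X_P)$ by the grading. Then $DZ_r(E)=Z_r(E+\ell)D$. To conclude via Corollary \ref{cor:critical}, I need the pair $Z_r(E+\ell)$ and $Z_r(-E-2s_k-2)$ to be coprime. For arbitrary $\ell$ this can fail, so I would instead use $Dx_1^r$ for the first product and $x_1^rD$ for the second, multiplied on the correct sides, so that each product becomes a polynomial in $E$ times $D$ with a shift. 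If necessary I would enlarge $r$ depending on $\ell$: the roots of $Z_r(t+\ell)$ are bounded below by $-s_k-1-\ell$, and shifting $Z_r(-t-2s_k-2)$ lets one separate the root sets as in Lemma \ref{lem:coprime}.

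I expect this bookkeeping of degree shifts, needed to guarantee coprimality and hence Corollary \ref{cor:critical}, to be the main obstacle. The locality reduction in Step 1 should be routine.
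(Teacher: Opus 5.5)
Your Steps 1--3 match the paper's proof, and so does Step 4 when $D$ does not lower degree. The genuine gap is the degree-lowering case, exactly where you expected trouble.

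First a sign: $[E,D]=\ell D$ gives $DZ_r(E)=Z_r(E-\ell)D$, not $Z_r(E+\ell)D$. So the easy case is $\ell\ge 0$: the roots of $Z_r(t-\ell)$ lie in $(\ell-s_k-1,\ell+r-1]\subset(-s_k-1,\infty)$, while those of $Z_r(-t-2s_k-2)$ lie in $(-\infty,-s_k-1)$.

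For $\ell<0$ your remedy cannot work. Using $Dx_1^r$ and $x_1^rD$ ``on the correct sides'' produces exactly $\sum_b Db(\mathbf{x})b^{\mathrm{op}}(\partial)=Z_r(E-\ell)D$ and $\sum_b b^{\mathrm{op}}(\partial)b(\mathbf{x})D=Z_r(-E-2s_k-2)D$. Nothing in that data shifts either polynomial. If $\ell\le -2s_k-2$, then because $J(0)=0$, both $Z_{r_1}(t-\ell)$ and $Z_{r_2}(-t-2s_k-2)$ vanish at $t=-2s_k-2$ once $r_1>-2s_k-2-\ell$ and $r_2\ge 1$. Enlarging $r$ therefore only adds common roots, and Corollary~\ref{cor:critical}(2) never becomes available.

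The paper treats $\ell<0$ by bootstrapping through the settled case.
\begin{itemize}
\item For $b\in\mathcal{B}_{-\ell}$, the operator $b(\mathbf{x})D$ has weight $0$, which gives $Z_{-\ell}(-E-2s_k-2)D\in W_{X_P}$.
\item For $b\in\mathcal{B}_r$, the operator $b^{\mathrm{op}}(\partial)D$ has weight $\ell-r$, which gives $Z_{r-\ell}(-E-2s_k-2)\,b^{\mathrm{op}}(\partial)D\in W_{X_P}$.
\item Commuting $b^{\mathrm{op}}(\partial)$ past the first polynomial is meant to give a second, coprime polynomial, after which Corollary~\ref{cor:critical}(1) finishes.
\end{itemize}
Do not import that step as printed. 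The $\partial$'s lower degree, so $b^{\mathrm{op}}(\partial)h(E)=h(E+r)b^{\mathrm{op}}(\partial)$. This turns the first product into $Z_{-\ell}(-E-2s_k-2-r)\,b^{\mathrm{op}}(\partial)D$, not $Z_{-\ell}(-E-2s_k-2+r)\,b^{\mathrm{op}}(\partial)D$. On $\mathbb{A}^n$ this is just $\partial_1^r(E+n)=(E+n+r)\partial_1^r$. In the variable $u=-E-2s_k-2$ one has $Z_{r-\ell}(u)=Z_r(u)\,Z_{-\ell}(u-r)$, so the two polynomials are never coprime.

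So the degree-lowering case needs an input beyond bookkeeping of $E$-polynomials. One route that should work with the paper's own tools is the argument of Theorem~\ref{thm:filtration}. It uses only $W_{X_P,\lambda}=\mathcal{D}(X_P)_\lambda$ for $\lambda\ge 0$, on $X_P$ and on $X_{P^{\mathrm{op}}}$. From this, via the Fourier isomorphism on symbols, it shows that the symbol map $\mathrm{gr}\,W_{X_P,\lambda}\to K[T^\ast X_P^\circ]_\lambda$ is onto for every $\lambda$. This map factors through the injection $\mathrm{gr}\,\mathcal{D}(X_P)_\lambda\hookrightarrow K[T^\ast X_P^\circ]_\lambda$, so surjectivity forces $\mathrm{gr}\,W_{X_P,\lambda}=\mathrm{gr}\,\mathcal{D}(X_P)_\lambda$. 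Induction on order then gives $W_{X_P,\lambda}=\mathcal{D}(X_P)_\lambda$ for $\lambda<0$.
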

\begin{proof}
     Since $X_P-X_P^\circ$ has codimension at least $2$ and $X_P$ is normal, we have $\mathcal{D}(X_P)=\mathcal{D}(X_P^\circ)$. The containment $W_{X_P}\subseteq \mathcal{D}(X_P^\circ)$ follows from \eqref{eq:realization}. Thus we only need to show $\mathcal{D}(X_P)\subseteq W_{X_P}.$ 
     
     Let $D\in \mathcal{D}(X_P).$ We may assume $[E,D]=\ell D$ for some integer $\ell$. We claim there is $r\in \zz_{\ge 0}$ such that  $b(\mathbf{x})D, Db(\mathbf{x})\in W_{X_P}$ for all $b\in \mathcal{B}_r$. Note that $\mathcal{D}(X_P)$ is equipped with a natural rational $G$-action from \eqref{eq:action}. Assume first $\cc D$ is fixed by $B$. Let $U:=V(x_1)^c:=\{x_1\neq 0\}\subset X_P.$ It is an affine open subvariety of $X_P^\circ$. Since $\mathcal{T}(U)=\mathcal{T}(X_P^\circ)_{x_1}$ is generated by $\mathfrak{g}$ as a $K[U]$-module, Proposition \ref{prop:liecontained} implies
\begin{align*}
    \Delta(U)\subseteq  (W_{X_P})_{x_1}\subseteq \mathcal{D}(U).
\end{align*}
As $U$ is smooth, $\Delta(U)=\mathcal{D}(U)$ and the inclusions above are equalities. In particular, $x_1^rD$ and $Dx_1^r$ are in $W_{X_P}$ for some $r$ large. Since $\cc D$ is fixed by $B,$ by the $B$-action we have $b(\mathbf{x})D,  Db(\mathbf{x}) \in W_{X_P}$  for all $b\in \mathcal{B}_r.$ For general $D$, consider the finite dimensional $G$-submodule $V\subseteq \mathcal{D}(X_P)$ generated by $D$. As $V$ is semisimple, the general case follows from the theory of highest weight.

By the previous discussion and Lemma \ref{Lem:unitgen}, $Z_r(-E-2s_k-2)D, DZ_r(E)\in  W_{X_P}.$
Note that $DZ_r(E)=Z_r(E-\ell)D.$ If $\ell\ge 0,$ then $Z_{r}(t-\ell)$ and $Z_{r}(-t-2s_k-2)$ are coprime. Thus $D\in W_{X_P}$. Assume $\ell<0$. For any $b\in \mathcal{B}_{-\ell}$, $[E,b(\mathbf{x})D]=0$ and thus  $b(\mathbf{x}) D\in W_{X_P}.$ Therefore, Lemma \ref{Lem:unitgen} implies $$Z_{-\ell}(-E-2s_k-2)D\in W_{X_P}.$$ For the same reason, for any $r\in \zz_{\ge 0}$ and $b\in \mathcal{B}_r$
\begin{align*}
    Z_{r-\ell}(-E-2s_k-2)b^{\mathrm{op}}(\partial)D\in W_{X_P}.
\end{align*}
Since for any $h\in K[t]$ and $b\in \mathcal{B}_r$, $b^{\mathrm{op}}(\partial)h(E)=h(E+r)b^{\mathrm{op}}(\partial),$
we have
\begin{align*}
    W_{X_P}&\ni b^{\mathrm{op}}(\partial)Z_{-\ell}(-E-2s_k-2)D=Z_{-\ell}(-E-2s_k-2+r)b^{\mathrm{op}}(\partial)D.
\end{align*}
For $r$ sufficiently large, $Z_{-\ell}(t+r)$ and $Z_{r-\ell}(t)$ are coprime, so $b^{\mathrm{op}}(\partial)D\in W_{X_P}$. Consequently, there is $r$ large such that $b(\mathbf{x})D$ and $b^{\mathrm{op}}(\partial)D$ for all $b\in \mathcal{B}_r$. Corollary \ref{cor:critical} implies $D\in W_{X_P}$.
\end{proof}

\begin{Lem}\label{lem:structure}
    We have natural isomorphisms of $W_{X_P}$-modules
    \begin{align*}
        W_{X_P}/W_{X_P}\mathfrak{g}&\cong K[X_P] \oplus K[X_{P^{\mathrm{op}}}]
    \end{align*}
    and
    \begin{align*}
    W_{X_P}/W_{X_P}(\mathfrak{g}+\mathfrak{m}_P^{\mathrm{ab}})\cong K[X_P].
    \end{align*}
\end{Lem}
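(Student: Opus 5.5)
The plan is to construct the maps explicitly, prove surjectivity using the simplicity of the target modules, and get injectivity from a Casimir identity inside $W_{X_P}$. I treat the second isomorphism first, since the first reduces to it.

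\emph{The maps.} Define $\alpha:W_{X_P}\to K[X_P]$ by $D\mapsto D(1)$. Elements of $\mathfrak{g}$ and of $\mathfrak{m}_P^{\mathrm{ab}}$ (spanned by $E$) are first-order operators without constant term, so they kill $1$. Hence $\alpha$ factors through $W_{X_P}/W_{X_P}(\mathfrak g+\mathfrak m_P^{\mathrm{ab}})$.

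Make $K[X_{P^{\mathrm{op}}}]$ a $W_{X_P}$-module through the isomorphism $\mathcal F_{P|P^{\mathrm{op}}}$ of Lemma \ref{eq:Fourierauto}, and set $\beta(D):=\mathcal F_{P|P^{\mathrm{op}}}(D)(1)$. Since $\mathcal F_{P|P^{\mathrm{op}}}$ is $G$-equivariant, it sends $\mathfrak g\subseteq W_{X_P}$ (Proposition \ref{prop:liecontained}) to $\mathfrak g\subseteq W_{X_{P^{\mathrm{op}}}}$, so $\beta$ also kills $W_{X_P}\mathfrak g$. By Corollary \ref{cor:E}, $E$ acts on $\beta(1)$ by $-2s_k-2\neq 0$.

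\emph{Surjectivity.} Both targets are simple $W_{X_P}$-modules; simplicity of $K[X_{P^{\mathrm{op}}}]$ comes from applying the earlier remark to $P^{\mathrm{op}}$. They are non-isomorphic: on $K[X_P]$ the eigenvalues of $E$ lie in $\zz_{\ge0}$, while on $K[X_{P^{\mathrm{op}}}]$ they lie in $-2s_k-2-\zz_{\ge 0}$. Therefore the cyclic submodule generated by $(1,1)$ in the direct sum is everything, and $\alpha\oplus\beta$ is surjective.

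\emph{Casimir identity.} Let $\Omega\in U(\mathfrak g)$ be the Casimir element, viewed in $W_{X_P}$. It acts on $V^\lor_{-j\omega_P}\subseteq K[X_P]$ by a scalar $q(j)$, where $q$ is a quadratic polynomial with $q(0)=0$. So $\Omega-q(E)$ kills $K[X_P]$, and faithfulness gives $\Omega=q(E)$ in $W_{X_P}$. Comparing with $\beta$, which sees $\Omega$ acting by $0$ on $\beta(1)$ and $E$ acting by $-2s_k-2$, gives $q(t)=\kappa\, t(t+2s_k+2)$ with $\kappa\neq 0$. Since $\Omega\in W_{X_P}\mathfrak g$, the class $u$ of $1$ in $M:=W_{X_P}/W_{X_P}\mathfrak g$ satisfies $E(E+2s_k+2)u=0$. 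Hence $u=u_0+u_1$, where $Eu_0=0$, $(E+2s_k+2)u_1=0$, and both lie in $K[E]u$. Because $E$ commutes with $\mathfrak g$, each $u_i$ is $\mathfrak g$-invariant.

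\emph{Reduction to the second isomorphism.} The vector $u_0$ is killed by $\mathfrak g+\mathfrak m_P^{\mathrm{ab}}$. Applying $\mathcal F_{P|P^{\mathrm{op}}}$ and the shifted form of Corollary \ref{cor:E}, the same holds for $u_1$ after transport to $W_{X_{P^{\mathrm{op}}}}$. Thus the first isomorphism follows from the second applied to $P$ and to $P^{\mathrm{op}}$, together with surjectivity and $M=W_{X_P}u_0+W_{X_P}u_1$.

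\emph{Injectivity for the second isomorphism.} Let $N:=W_{X_P}/W_{X_P}(\mathfrak g+\mathfrak m_P^{\mathrm{ab}})$ with generator $v$. The plan is to show $b^{\mathrm{op}}(\partial)v=0$ for all $b\in\mathcal B_r$, $r\ge1$. The span $w$ of these vectors is a $G$-quotient of $V_{-r\omega_P}$, since $Xb^{\mathrm{op}}(\partial)v=[X,b^{\mathrm{op}}(\partial)]v$. So $\Omega$ acts on $w$ by the Casimir scalar of $V_{-r\omega_P}$, which equals $q(r)$ because dual representations have the same Casimir scalar. On the other hand, $Ew=-rw$ gives $\Omega w=q(E)w=q(-r)w$. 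Since $q(r)-q(-r)=2\kappa(2s_k+2)r\neq 0$, we get $w=0$.

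Then I would show that $N=K[X_P]v$. Using the $E$-grading and a PBW-type ordering, every element of $W_{X_P}$ lies in $\sum_r K[X_P]\cdot\mathrm{span}(\mathcal B^{\mathrm{op}}_r(\partial))$ plus elements that kill $v$. Given this, $K[X_P]\to N$, $f\mapsto fv$, is surjective, and it is inverse to $\alpha$ since $\alpha(fv)=f$.

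\textbf{Main obstacle.} The hard step is the spanning claim in the last paragraph. $W_{X_P}$ is not obviously spanned by the products $b_1(\mathbf x)b_2^{\mathrm{op}}(\partial)$ of Lemma \ref{lem:naivebase}, because commutators $[\partial_i,x_j]$ are not polynomial in $E$. What I would try first is an induction on the Bernstein filtration. The ordered products $b_1(\mathbf x)b_2^{\mathrm{op}}(\partial)$ span $\mathrm{gr}^{\mathrm{Ber}}$ modulo terms in the left ideal generated by $\mathfrak g$ and $E$. Should that fail, the fallback is to show that $N$ is $E$-locally finite with spectrum in $\zz_{\ge0}$ by the Casimir argument above, and then use Corollary \ref{cor:critical} to deduce that $N$ is simple, which forces $N\cong K[X_P]$.
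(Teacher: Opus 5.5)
You have a genuine gap at the injectivity step. Everything before it is correct, and part of it differs from the paper. That includes surjectivity via two non-isomorphic simple targets, the identity $\Omega=\kappa E(E+2s_k+2)$ in $W_{X_P}$ (from faithfulness plus $\beta$), and the splitting $u=u_0+u_1$ that reduces the first isomorphism to the second for $P$ and $P^{\mathrm{op}}$. It also includes the comparison $q(r)\neq q(-r)$, which gives $b^{\mathrm{op}}(\partial)v=0$ in $N$ for $r\ge 1$.

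The gap is injectivity of $N\to K[X_P]$. Given $b^{\mathrm{op}}(\partial)v=0$, your spanning claim is equivalent to $N=K[X_P]v$, which is the statement to be proved. Neither proposed route establishes it.
\begin{itemize}
\item Induction on the Bernstein filtration would have to normal-order $[\partial_i,x_j]$. For $d_P\ge 2$ this is in general an operator of order $d_P-1$, with no known expression of the required shape. The remark following Proposition \ref{prop:partitive} records that even the behaviour of iterated commutators with respect to $\mathrm{Ber}$ is open.
\item In the fallback, $\Omega=q(E)$ only says that every $\mathfrak{g}$-constituent of the weight space $N_\lambda$ has Casimir eigenvalue $q(\lambda)$. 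This excludes $-2s_k-2<\lambda<0$ but not $\lambda\le -2s_k-2$. You control the $\mathfrak{g}$-types of the vectors $b^{\mathrm{op}}(\partial)v$, not of a general $Dv$.
\item Even granting spectrum in $\zz_{\ge 0}$, applying Corollary \ref{cor:critical} to a submodule $N'\subseteq N$ and $m=v$ requires $\mathcal{B}_r(\mathbf{x})v\subseteq N'$, which is exactly what is unknown. Pushing further, you would need the $E$-weight-$0$ $\mathfrak{g}$-invariants of $N$ to be $Kv$, and you give no argument for this.
\end{itemize}

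The missing ingredient is geometric: localize to the smooth affine opens $U_i=\{x_i\neq 0\}\subseteq X_P^\circ$. As in the proof of Theorem \ref{thm:Weyl=Diff}, $(W_{X_P})_{x_i}=\mathcal{D}(U_i)$, and $\mathcal{T}(U_i)$ is generated by $\mathfrak{g}$ over $K[U_i]$. Since $\mathcal{D}(U)/\mathcal{D}(U)\mathcal{T}(U)\cong K[U]$ for smooth affine $U$, any $D$ with $D(1)=0$ satisfies $x_i^{r}D\in W_{X_P}\mathfrak{g}$ for every $i$. Hence $b(\mathbf{x})D\in W_{X_P}\mathfrak{g}$ for all $b\in\mathcal{B}_r$ with $r\gg 0$. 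The paper applies this to $D$ and, through $\mathcal{F}_{P|P^{\mathrm{op}}}$, to the second component of $\varphi$. So $\varphi(D)=0$ gives $b(\mathbf{x})D,\ b^{\mathrm{op}}(\partial)D\in W_{X_P}\mathfrak{g}$. Corollary \ref{cor:critical} with $M=W_{X_P}\mathfrak{g}\subseteq M'=W_{X_P}$ then yields $D\in W_{X_P}\mathfrak{g}$, and the second isomorphism is deduced from the first.

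Your route can be repaired with the same input. Take $n=Dv$ in the kernel of $N\to K[X_P]$.
\begin{itemize}
\item Localization gives $\mathcal{B}_r(\mathbf{x})n=0$ for $r\gg 0$.
\item Let $m$ be the order of $\mathcal{F}_{P|P^{\mathrm{op}}}(D)$. Use $I^{m+1}A\subseteq W_{X_{P^{\mathrm{op}}}}I$ for $A$ of order $m$ and $I$ the ideal of the origin in $K[X_{P^{\mathrm{op}}}]$. Together with $\partial_i v=0$, this gives $\mathcal{B}^{\mathrm{op}}_r(\partial)n=0$ for $r\gg 0$.
\item Corollary \ref{cor:critical} then forces $n=0$.
\end{itemize}
In that version your Casimir computation replaces the Fourier-side half of the paper's argument. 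The $\mathbf{x}$-side localization remains the essential step.
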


\begin{proof}
Consider the homomorphism of $W_{X_P}$-modules 
 \begin{align*}
        \varphi:W_{X_P}/W_{X_P}\mathfrak{g}&\longrightarrow K[X_P] \oplus K[X_{P^{\mathrm{op}}}]\\
    D&\mapsto (D(1),\mathcal{F}_{P|P^{\mathrm{op}}}(D)(1)).
    \end{align*}
Let $r\ge 1$ and $b\in \mathcal{B}_r$. Then $\varphi(b(\mathbf{x}))=(b,0)$ and $\varphi(b^{\mathrm{op}}(\mathbf{\partial}))=(0,b^{\mathrm{op}}).$ Also $\varphi(1)=(1,1)$ and $\varphi(E)=(0,-2(s_k+1))$ by Corollary \ref{cor:E}. Thus $\varphi$ is surjective. 

Let $D\in W_{X_P}$ such that $D(1)=0$. Note that for a smooth affine variety $X$, $K[X]\cong \mathcal{D}(X)/\mathcal{D}(X)\mathcal{T}(X).$ Thus by localization and the argument in the first part of the proof of Theorem \ref{thm:Weyl=Diff}, there exists $r\ge 0$ such that $b(\mathbf{x})D\in W_{X_P}\mathfrak{g}$ for all $b\in \mathcal{B}_r$. Therefore, if $\varphi(D)=0,$ there is some $r$ such that  $b(\mathbf{x})D, b^{\mathrm{op}}(\partial)D\in W_{X_P}\mathfrak{g}$ for all $b\in \mathcal{B}_r$. Corollary \ref{cor:critical} implies that $D\in W_{X_P}\mathfrak{g},$ so $\varphi$ is an isomorphism. As a consequence, the following map is also an isomorphism
\begin{align*}
W_{X_P}/W_{X_P}(\mathfrak{g}+\mathfrak{m}_P^{\mathrm{ab}})&\longrightarrow K[X_P] \\
    D&\mapsto D(1).
\end{align*}
\end{proof}

\begin{Prop}
We have an isomorphism of $W_{X_P}$-modules 
\begin{align*}
    H^{i}(X_P^\circ,\mathcal{O}_{X_P^\circ})\cong\begin{cases}
        K[X_P] & \textrm{if } i=0,\\
        K[X_{P^{\mathrm{op}}}] & \textrm{if } i=\dim X_P-1,\\
        0 & \textrm{if } i\neq 0,\dim X_P-1.
    \end{cases} 
\end{align*}
Therefore,
\begin{align*}
    H^\ast(X_P^\circ,\mathcal{O}_{X_P^\circ})\cong W_{X_P}/W_{X_P}\mathfrak{g}.
\end{align*}
\end{Prop}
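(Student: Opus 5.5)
The plan is to compute $H^\ast(X_P^\circ,\mathcal{O})$ via local cohomology at the cone point, and then identify the single nontrivial higher group as a $W_{X_P}$-module using Lemma \ref{lem:structure}. Write $n=\dim X_P$ and $R=K[X_P]$, and let $\mathfrak{m}$ be the homogeneous maximal ideal of the origin, so that $X_P^\circ=X_P-\{0\}$.

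\textbf{Step 1: vanishing.} The excision sequence gives $H^0(X_P^\circ,\mathcal{O})=R$, by normality and $\mathrm{codim}\ge 2$. It also gives $H^i(X_P^\circ,\mathcal{O})\cong H^{i+1}_{\mathfrak{m}}(R)$ for $i\ge 1$. By Lemma \ref{lem:XP}, $R$ is Cohen--Macaulay of depth $n$ at $\mathfrak{m}$, so $H^j_{\mathfrak{m}}(R)=0$ for $j\neq n$. This yields the vanishing outside $i=0,n-1$; it needs $n\ge 2$, which holds since the boundary has codimension at least $2$.

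\textbf{Step 2: the module structure on the top group.} The $W_{X_P}$-module structure on each $H^i$ comes from Theorem \ref{thm:Weyl=Diff}: $\mathcal{D}(X_P^\circ)=W_{X_P}$ acts on the sheaf $\mathcal{O}_{X_P^\circ}$, hence on its \v{C}ech cohomology, for instance using the cover by the $\{b\neq 0\}$ with $b\in\mathcal{B}_1$.

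By Gorenstein graded local duality, $H^n_{\mathfrak{m}}(R)$ is the graded dual of $\omega_R\cong R(a)$. The graded dual of $R=\bigoplus_j V^\lor_{-j\omega_P}$ is $\bigoplus_j V_{-j\omega_P}$. So as a $G\times M_P^{\mathrm{ab}}$-module, $H^{n-1}(X_P^\circ,\mathcal{O})\cong\bigoplus_{j\ge0}V_{-j\omega_P}$, each isotypic piece occurring exactly once and $E$ acting on the $j$-th piece by $\alpha-j$ for a constant $\alpha$ determined by $a$.

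\textbf{Step 3: the generating class and the $a$-invariant.} The key normalization to establish is $\alpha=-2(s_k+1)$; this is the step I expect to be the main obstacle. I would obtain it by computing the $M_P^{\mathrm{ab}}$-weight of a nowhere-vanishing $G$-invariant top form on $X_P^\circ=P^{\mathrm{der}}\backslash G$. Such a form transforms by the modular character $\delta_P$ under $M_P^{\mathrm{ab}}$. Its degree should match the shift $E\mapsto -2s_k-2-E$ from the proof of Lemma \ref{Lem:unitgen}, via \cite[Proposition 6.2]{Getz:Hsu:Leslie}. Alternatively, one can read off the Euler weight of the lowest \v{C}ech class directly.

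Granting this, let $v$ span the trivial $G$-isotypic piece of $H^{n-1}$. Since $v$ is $G$-invariant, $\mathfrak{g}v=0$, and by the computation above $(E+2s_k+2)v=0$.

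\textbf{Step 4: the isomorphism.} By Lemma \ref{lem:structure}, the isomorphism $\varphi$ sends $E+2s_k+2$ to $(2s_k+2,0)$. Hence
\[
W_{X_P}/W_{X_P}(\mathfrak{g}+K(E+2s_k+2))\cong K[X_{P^{\mathrm{op}}}].
\]
So $D\mapsto Dv$ gives a nonzero $W_{X_P}$-map $K[X_{P^{\mathrm{op}}}]\to H^{n-1}(X_P^\circ,\mathcal{O})$.

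The source is a simple $W_{X_P}$-module: it is the transport of the simple module $K[X_{P^{\mathrm{op}}}]$ over $W_{X_{P^{\mathrm{op}}}}$ under Lemma \ref{eq:Fourierauto}. Hence the map is injective. It is $G$-equivariant and compatible with $E$-eigenvalues. Both sides have the same multiplicity-one decomposition $\bigoplus_j V_{-j\omega_P}$ with matching $E$-eigenvalues, so comparing isotypic components shows the map is surjective.

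The final assertion, $H^\ast(X_P^\circ,\mathcal{O})\cong W_{X_P}/W_{X_P}\mathfrak{g}$, then follows by combining this with the first isomorphism in Lemma \ref{lem:structure}.
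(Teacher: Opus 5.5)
Your Steps 1, 2 and 4 follow the paper's route: local cohomology at the cone point, Cohen--Macaulay vanishing, Gorenstein local duality to identify $H^{n-1}$ with $\bigoplus_j V_{-j\omega_P}$, and then a $\mathfrak{g}$-invariant generator $v$ combined with Lemma \ref{lem:structure}. However, Step 3 is a genuine gap. The identity $(E+2s_k+2)v=0$, i.e.\ $\alpha=-2(s_k+1)$, is only supported by a heuristic (``its degree should match''), and your Step 4 depends on it. Making it rigorous would require three things, none of which you carry out:
\begin{enumerate}
\item identifying the graded canonical module with $R\eta$, for a $G$-invariant volume form $\eta$ placed in its natural $E$-weight, so that $\alpha$ is minus that weight;
\item converting the character $\delta_P$ into an $E$-eigenvalue, with the sign conventions of \eqref{eq:action};
\item matching the result with $2s_k+2$, which depends on how $s_k$ is defined in \cite{Hsu:asymptotics}.
\end{enumerate}

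The missing idea is that you do not need this normalization at all. By your own Step 2, $v$ lies in the top $E$-degree $\alpha$ of $H^{n-1}$; equivalently, it spans the one-dimensional socle of $H^n_{\mathfrak{m}}(R)$. Since $[E,x_i]=x_i$ and $\alpha+1$ is not an $E$-eigenvalue, you get $x_iv=0$ for all $i$. So replace $E+2s_k+2$ by the $x_i$. We have $\varphi(Dx_i)=(D(x_i),0)$, because $\partial_i^{\mathrm{op}}(1)=0$. Since $K[X_P]$ is a simple $W_{X_P}$-module, the image of $\sum_i W_{X_P}x_i$ under the isomorphism $\varphi$ of Lemma \ref{lem:structure} is therefore $K[X_P]\oplus 0$. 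This gives $W_{X_P}/(W_{X_P}\mathfrak{g}+\sum_i W_{X_P}x_i)\cong K[X_{P^{\mathrm{op}}}]$.

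The rest of your Step 4 then goes through verbatim: simplicity of the source gives injectivity, and multiplicity one gives surjectivity. This is exactly the paper's argument. It also yields $Ev=-2(s_k+1)v$, since $E$ acts on the class of $1$ by $\mathcal{F}_{P|P^{\mathrm{op}}}(E)(1)=-2s_k-2$. So your Step 3 comes out as a consequence rather than being needed as an input.
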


\begin{proof}
    Since $\mathcal{O}_{X_P^\circ}$ is a left $\mathcal{D}_{X_P^\circ}$-module,  $H^i(X_P^\circ,\mathcal{O}_{X_P^\circ})$ is a $W_{X_P}$-module for all $i$. We compute $H^i(X_P^\circ,\mathcal{O}_{X_P^\circ})$ as $K[X_P]$-modules with compatible $\mathfrak{g}$-action.
    
    Clearly $H^0(X_P^{\circ},\mathcal{O}_{X_P^\circ})=K[X_P]$. Let $\mathfrak{p}\subset K[X_P]$ be the maximal ideal generated by $\mathcal{B}_1.$ Let $i>0$. By \cite[4.6.2]{Weibel}, we have an isomorphism of $K[X_P]$-modules
    \begin{align*}
        H^i(X_P^\circ,\mathcal{O}_{X_P^\circ})\cong H^{i+1}_{\mathfrak{p}}(K[X_P]),
    \end{align*}
    where the latter is the local cohomology. By \cite[Proposition 2.14(1)]{Localcohomology}, 
    $$H^{i+1}_{\mathfrak{p}}(K[X_P])\subseteq (H^{i+1}_{\mathfrak{p}}(K[X_P]))_\mathfrak{p}\cong H^{i+1}_{\mathfrak{p}K[X_P]_\mathfrak{p}}(K[X_P]_\mathfrak{p}).$$
   Since $K[X_P]_\mathfrak{p}$ is Cohen-Macaulay,  $H^{i+1}_{\mathfrak{p}K[X_P]_\mathfrak{p}}(K[X_P]_\mathfrak{p})=0$ unless $i=\dim X_P-1$ by \cite[Corollary 4.6.9]{Weibel}. Thus $H^i(X_P^\circ,\mathcal{O}_{X_P^\circ})=0$ unless $i=\dim X_P-1$.

    Now we compute $U:=H^{\dim X_P}_\mathfrak{p}(K[X_P]).$ Consider the space $K[x_1^{-1},\ldots, x_{\dim V_P}^{-1}]$ with the $K[V_P]$-module structure given by 
    \begin{align*}
        \left(\prod_{i} x_i^{a_i}\right) \left(\prod_{i} x_i^{-b_i}\right):=\begin{cases}
            \prod_{i} x_i^{-(b_i-a_i)} & \textrm{if } b_i\ge a_i \textrm{ for all }i,\\
            0& \textrm{otherwise.}
        \end{cases}
    \end{align*}
    It has a natural $\mathfrak{g}$-action given by derivation. Let $I$ be the kernel of the map $K[V_P]\to K[X_P]$. The variety $X_P$ is Gorenstein by Lemma \ref{lem:XP}, so by the Grothendieck local duality \cite[\href{https://stacks.math.columbia.edu/tag/0AAK}{Tag 0AAK}]{stacks-project} and the computation in \cite{Matsumura:injectivehull}
    \begin{align*}
        U\cong \{ f\in K[x_1^{-1},\ldots, x_{\dim V_P}^{-1}]: I\cdot f=0\}
    \end{align*}
    as $K[X_P]$-modules with compatible $\mathfrak{g}$-actions. As a $\mathfrak{g}$-module, $U$ is isomorphic to $K[X_{P^{\mathrm{op}}}].$ 
    Let $e_0$ be a nonzero element in $H^{\dim X_P-1}(X_P^\circ,\mathcal{O}_{X_P^\circ})$ that generates the trivial $\mathfrak{g}$-representation. It follows that the kernel of the homomorphism
    \begin{align*}
        W_{X_P}&\to H^{\dim X_P-1}(X_P^\circ,\mathcal{O}_{X_P^\circ})\\
            D&\mapsto D(e_0)
    \end{align*}
    contains $W_{X_P}\mathfrak{g}$ and $x_1,\ldots, x_{\dim V_P}.$ Therefore, by Lemma \ref{lem:structure} $H^{\dim X_P-1}(X_P^\circ,\mathcal{O}_{X_P^\circ})$ contains a $W_{X_P}$-submodule isomorphic to $K[X_{P^{\mathrm{op}}}]$. Hence $H^{\dim X_P-1}(X_P^\circ,\mathcal{O}_{X_P^\circ})\cong K[X_{P^{\mathrm{op}}}]$.
\end{proof}

\subsection{Associated graded algebra of $W_{X_P}$}

Now we study the associated graded algebra of $W_{X_P}$ with respect to the order filtration via quantization. Our discussion is largely motivated by \cite{GR:basic}. 

For $\lambda\in \zz,$ we let $$W_{X_P,\lambda}:=\{D\in W_{X_P}: [E,D]=\lambda D\}.$$ 
Let $(F_m^{\mathrm{ord}})_{m\ge 0}$ be the order filtration on $W_{X_P},$ and put $$F_m^{\mathrm{ord}}W_{X_P,\lambda}:=F_m^{\mathrm{ord}}W_{X_P}\cap W_{X_P,\lambda}.$$
For $m\in \zz,$ let 
\begin{align*}
    \widehat{F}_mW_{X_P}:=\bigoplus_{\lambda\in \zz}  \widehat{F}_mW_{X_P,\lambda}:=\bigoplus_{\lambda\in \zz}\{ D\in W_{X_P,\lambda}:  \operatorname{ord}\bigl(\mathcal{F}_{P|P^{\mathrm{op}}}(D)\bigr)
\le m+\lambda d_P\big\}.
\end{align*}

\begin{Lem}\label{lem:=0}
    We have $\widehat{F}_mW_{X_P}=0$ for $m<0$ and $\widehat{F}_0W_{X_P}= K[X_P].$
\end{Lem}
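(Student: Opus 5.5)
Both assertions reduce to comparing the order of $\mathcal{F}_{P|P^{\mathrm{op}}}(D)$ with a lower bound forced by the $E$-weight $\lambda$. Throughout, fix $\lambda\in\zz$ and $0\neq D\in W_{X_P,\lambda}$, and put $D':=\mathcal{F}_{P|P^{\mathrm{op}}}(D)\in W_{X_{P^{\mathrm{op}}}}$. By Corollary \ref{cor:E}, $\mathcal{F}_{P|P^{\mathrm{op}}}$ sends $E$ to $-E-2s_k-2$. Since the constant commutes with everything, $[E,D']=-\lambda D'$; that is, $D'$ has weight $-\lambda$ for the Euler operator on $X_{P^{\mathrm{op}}}$.

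The key step is a lower bound: every nonzero $D'\in W_{X_{P^{\mathrm{op}}},-\lambda}$ satisfies $\mathrm{ord}(D')\ge \lambda d_P$, with equality when $\lambda\ge 0$ only if $D'$ has the form $f(\partial^{\mathrm{op}})$ for some $f\in V^\lor_{-\lambda\omega_P}$. I would argue via the representative of $D'$ in the classical Weyl algebra $W_{V_P^\lor}$, as in the proof of the Proposition that $W_{X_P}$ is a domain. By \eqref{eq:realization}, the operators $\partial_i^{\mathrm{op}}$ are classical operators whose terms of order $\ell$ have polynomial coefficients of degree $\ell-1$. Hence each has homogeneous weight $-1$ and classical order at most $d_P$. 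Expand $D'=\sum_j p_j(\mathbf{x}^{\mathrm{op}})q_j(\partial^{\mathrm{op}})$ with $p_j\in\mathcal{B}_{a_j}^{\mathrm{op}}$ and $q_j$ homogeneous of degree $c_j$. Weight considerations give $a_j-c_j=-\lambda$, so $c_j=a_j+\lambda\ge\lambda$.

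Next I would use Lemma \ref{lem:orderadd} (applied to $P^{\mathrm{op}}$), which says $\mathrm{ord}(q_j(\partial^{\mathrm{op}}))=c_jd_P$. To pass from single terms to the sum, I would work with the leading symbol of $D'$ in order. Take the terms with maximal $c_j$ and evaluate on suitable homogeneous test functions, as in Lemma \ref{lem:naivebase}, choosing $f$ with $q_j(\partial^{\mathrm{op}})(f)=\delta_{jj_0}$ via the third item of the Lemma following the definition of $W_{X_P}$. This shows that the top symbols cannot all cancel, so $\mathrm{ord}(D')=\max_j c_jd_P\ge\lambda d_P$. I expect this non-cancellation of leading symbols to be the main obstacle. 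I would handle it by looking at the coefficient of the top-order pure derivative along the highest weight direction, namely the $c_\ell x_1^{\ell-1}\partial^\ell/\partial x_1^\ell$ term of \eqref{eq:realization}. I would combine this with the $G$-action, arguing as in Lemma \ref{lem:fullentry}, to reduce to a highest weight component.

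With the bound in hand, both claims follow. If $m<0$ and $D\in\widehat F_mW_{X_P,\lambda}$, then $\mathrm{ord}(D')\le m+\lambda d_P<\lambda d_P$, contradicting the bound; hence $\widehat F_m W_{X_P}=0$. If $m=0$, equality forces every surviving term to have $c_j=\lambda d_P/d_P=\lambda$, hence $a_j=0$. So $D'$ is a polynomial $f(\partial^{\mathrm{op}})$, which is nonzero only when $\lambda\ge0$ and then has $f\in V^\lor_{-\lambda\omega_P}$. Applying $\mathcal{F}_{P^{\mathrm{op}}|P}$ through Lemma \ref{eq:Fourierauto}, which sends $\partial_i^{\mathrm{op}}$ to $x_i$, gives $D=f(\mathbf{x})\in K[X_P]$. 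Conversely, for $f\in V^\lor_{-\lambda\omega_P}$, Lemma \ref{lem:orderadd} gives $\mathrm{ord}(\mathcal{F}(f))=\lambda d_P$, so $K[X_P]\subseteq\widehat F_0W_{X_P}$.
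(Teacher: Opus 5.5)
Your overall target is right, and the bound you want is true: a lower bound $\mathrm{ord}\,\mathcal{F}_{P|P^{\mathrm{op}}}(D)\ge\lambda d_P$ for $0\neq D\in W_{X_P,\lambda}$, with equality only for functions. But the way you propose to prove it has a genuine gap.

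\textbf{The gap.} It sits in the very first step, the expansion $D'=\sum_j p_j(\mathbf{x}^{\mathrm{op}})q_j(\partial^{\mathrm{op}})$. Lemma \ref{lem:naivebase} only says that the normally ordered monomials $b_1(\mathbf{x})b_2^{\mathrm{op}}(\partial)$ are linearly independent. Once $d_P\ge 2$, they do not span the Weyl algebra. Here is a concrete counterexample.
\begin{itemize}
\item By Proposition \ref{prop:E}, $E\in W_{X_P}$; it is $G$-invariant, of weight $0$, and of order $1$.
\item The span of the normally ordered monomials is the $G$-equivariant image of $\bigoplus_{r,s}V^\lor_{-r\omega_P}\otimes V_{-s\omega_P}$, and this map is injective by Lemma \ref{lem:naivebase}.
\item Weight zero forces $r=s$. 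By Schur's lemma, the $G$-invariant weight-zero part of the span is then spanned by the $Z_r(E)$ of Lemma \ref{Lem:unitgen}.
\item The $Z_r(E)$ have pairwise distinct orders $rd_P$, so every nonzero combination of them has order divisible by $d_P$. Hence $E$ is not in the span.
\item Likewise, every element of $\fg\subseteq W_{X_P}$ (Proposition \ref{prop:liecontained}) has order $1$.
\end{itemize}
So your formula $\mathrm{ord}(D')=\max_j c_jd_P$ fails for general $D'$; for example $D'=\mathcal{F}_{P|P^{\mathrm{op}}}(E)=-E-2s_k-2$ has order $1$. Your equality analysis for $m=0$ (``every surviving term has $a_j=0$'') rests on the same nonexistent normal form. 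The non-cancellation of leading symbols, which you flagged as the main obstacle, is not the real issue.

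You may instead have meant a representative in the classical Weyl algebra with classical partial derivatives. In that case Lemma \ref{lem:orderadd} no longer applies, and the weight count only gives classical order at least $\lambda$, not $\lambda d_P$.

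\textbf{The missing idea.} Measure the order of $D$ itself rather than look for a normal form of $D'$. This is how the paper argues.
\begin{itemize}
\item Let $n:=\mathrm{ord}(D)$. If $n>0$, choose $b_1,\dots,b_n\in\mathcal{B}_1$ so that $p:=[\cdots[D,b_1],\dots,b_n]$ is a nonzero function. It has weight, hence degree, $\lambda+n$.
\item Applying $\mathcal{F}_{P|P^{\mathrm{op}}}$ turns $p$ into an iterated bracket of $\mathcal{F}_{P|P^{\mathrm{op}}}(D)$ with the operators $\mathcal{F}_{P|P^{\mathrm{op}}}(b_i)$, each of order $d_P$.
\item Since $\mathrm{ord}[A,B]\le\mathrm{ord}A+\mathrm{ord}B-1$, and Lemma \ref{lem:orderadd} gives $\mathrm{ord}\,\mathcal{F}_{P|P^{\mathrm{op}}}(p)=(\lambda+n)d_P$, you get $(\lambda+n)d_P\le\mathrm{ord}\,\mathcal{F}_{P|P^{\mathrm{op}}}(D)+n(d_P-1)$.
\item Therefore $\mathrm{ord}\,\mathcal{F}_{P|P^{\mathrm{op}}}(D)\ge\lambda d_P+\mathrm{ord}(D)$.
\end{itemize}
This is a sharpened form of your bound, and it gives both assertions at once. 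The case $n=0$, where $D$ is a function, is Lemma \ref{lem:orderadd} directly.
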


\begin{proof}
    Let $m\le 0$.  Suppose there is $0\neq D\in\widehat{F}_m W_{X_P}$. We may assume $D\in W_{X_P,\lambda}$ and let \(n=\operatorname{ord}(D)\). If $n=0$, then $D\in V_{-\lambda\omega_P}^\lor,$ so by Lemma \ref{lem:orderadd} $\mathrm{ord}(\mathcal{F}_{P|P^{\mathrm{op}}}(D))=\lambda d_P\ge m+\lambda d_P.$ When $m<0,$ the inequality is strict, which contradicts the definition of $\widehat{F}_m W_{X_P}.$ If \(n>0\), there is $b_1,\ldots,b_n\in \mathcal{B}_1$ such that 
$$
p=[[\cdots[D,b_1],b_2],\dots,b_n]\in K[X_P]
$$
is a nonzero homogeneous function of degree $\lambda+n$. Then by Lemma \ref{lem:orderadd}
\begin{align*}
(\lambda+n)d_P=\mathrm{ord} (\mathcal{F}_{P|P^{\mathrm{op}}}(p))\le
m+\lambda d_P+nd_P-n,
\end{align*}
which is impossible as $m\le 0$.
\end{proof}

\begin{Lem}\label{lem:map}
For $m\in \zz_{\ge 0}$ and $\lambda\in \zz,$  $\mathcal F_{P\mid P^{\mathrm{op}}}
\left(F_m^{\mathrm{ord}}W_{X_P,\lambda}\right)
\subseteq
F_{m+\lambda d_P}^{\mathrm{ord}}W_{X_{P^{\mathrm{op}}},-\lambda}.$
\end{Lem}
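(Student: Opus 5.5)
The plan is to treat the weight statement and the order bound separately. The order bound is proved by induction on $n:=\operatorname{ord}\bigl(\mathcal{F}_{P|P^{\mathrm{op}}}(D)\bigr)$, not on $m$; the reason is that commuting with $\partial_i$ raises the order of $D$ but lowers the target quantity $m+\lambda d_P$ by exactly one.

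\emph{Weight.} By Corollary \ref{cor:E}, $\mathcal{F}_{P|P^{\mathrm{op}}}$ sends $E$ to $-E-2s_k-2$. Applying it to $[E,D]=\lambda D$ gives $[-E-2s_k-2,\mathcal{F}_{P|P^{\mathrm{op}}}(D)]=\lambda\,\mathcal{F}_{P|P^{\mathrm{op}}}(D)$. Hence $[E,\mathcal{F}_{P|P^{\mathrm{op}}}(D)]=-\lambda\,\mathcal{F}_{P|P^{\mathrm{op}}}(D)$, so $\mathcal{F}_{P|P^{\mathrm{op}}}(D)\in W_{X_{P^{\mathrm{op}}},-\lambda}$.

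\emph{Order: two standard facts.} First, for Grothendieck differential operators on a commutative ring, $\operatorname{ord}[A,B]\le \operatorname{ord}A+\operatorname{ord}B-1$. Second, an operator $D'$ on $K[X_{P^{\mathrm{op}}}]$ with $\operatorname{ord}D'=n>0$ has some $i$ with $\operatorname{ord}[D',x_i^{\mathrm{op}}]=n-1$. For the second fact: if every $[D',x_i^{\mathrm{op}}]$ had order at most $n-2$, then the Leibniz rule $[D',fg]=[D',f]g+f[D',g]$ and induction on monomials would give $\operatorname{ord}[D',f]\le n-2$ for all $f\in K[X_{P^{\mathrm{op}}}]$. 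That would force $\operatorname{ord}D'\le n-1$, a contradiction. We also use $\operatorname{ord}(\partial_i)=d_P$, which is Lemma \ref{lem:orderadd} with $r=1$.

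\emph{Induction on $n$.} Suppose $D\in F_m^{\mathrm{ord}}W_{X_P,\lambda}$ and $n=\operatorname{ord}\mathcal{F}_{P|P^{\mathrm{op}}}(D)$; the claim is $n\le m+\lambda d_P$.

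In the base case $n=0$, $\mathcal{F}_{P|P^{\mathrm{op}}}(D)$ is a function on $X_{P^{\mathrm{op}}}$. By the weight computation it is homogeneous of degree $-\lambda$, so $\lambda\le 0$. Therefore $D=f(\partial)$ for some nonzero $f\in V_{-(-\lambda)\omega_P}$, and Lemma \ref{lem:orderadd} gives $\operatorname{ord}D=-\lambda d_P$. Since $\operatorname{ord}D\le m$, we get $0\le m+\lambda d_P$, as required.

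For $n>0$, choose $i$ with $\operatorname{ord}[\mathcal{F}_{P|P^{\mathrm{op}}}(D),x_i^{\mathrm{op}}]=n-1$. By Lemma \ref{eq:Fourierauto} this commutator equals $\mathcal{F}_{P|P^{\mathrm{op}}}([D,\partial_i])$. The element $D':=[D,\partial_i]$ has weight $\lambda-1$ and order at most $m+d_P-1$. The induction hypothesis applies to $D'$, since its Fourier transform has order $n-1<n$. It gives
\begin{align*}
n-1\le (m+d_P-1)+(\lambda-1)d_P=m+\lambda d_P-1,
\end{align*}
which is the desired bound. (If $D'$ is nonzero but its order is smaller than $m+d_P-1$, the bound only improves; if $D'=0$ then $n-1=\operatorname{ord}0$ cannot hold for $n>0$, so this does not occur.)

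The main point to check carefully is the choice of induction variable. Inducting on $m$ fails, because commuting with $\partial_i$ raises the order of $D$. Inducting on the order of the Fourier transform works, and the recursion closes because the shift $d_P-1$ in order exactly compensates the shift $-d_P$ in weight. The only other delicate input is the base case, which rests on Lemma \ref{lem:orderadd} computing orders of polynomials in $\partial$ exactly.
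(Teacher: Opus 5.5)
Your proof is correct. It uses the same ingredients as the paper: the bound $\operatorname{ord}[A,B]\le\operatorname{ord}A+\operatorname{ord}B-1$, weight bookkeeping with $E$, and Lemma \ref{lem:orderadd} to fix the orders of polynomials in $\partial$. The organization, however, is different.

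The paper introduces the auxiliary filtration $\widehat{F}_mW_{X_P}$. It proves the reverse inclusion $\widehat{F}_mW_{X_P}\subseteq F_m^{\mathrm{ord}}W_{X_P}$ by upward induction on $m$, directly from the definition of order: it commutes with every homogeneous $f\in K[X_P]$ and uses $\operatorname{ord}\mathcal{F}_{P|P^{\mathrm{op}}}(f)\le jd_P$, with Lemma \ref{lem:=0} as base case. It then gets the statement by applying that inclusion to $P^{\mathrm{op}}$ and using $\mathcal{F}_{P^{\mathrm{op}}|P}\circ\mathcal{F}_{P|P^{\mathrm{op}}}=\mathrm{Id}$.

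You instead work directly on the side where the conclusion lives and induct downward on $\operatorname{ord}\mathcal{F}_{P|P^{\mathrm{op}}}(D)$. This costs you the witness fact (some commutator with an $x_i^{\mathrm{op}}$ drops the order by exactly one), but you only ever commute with the generators $\partial_i$. Unrolled, your induction is exactly the iterated-commutator computation in the proof of Lemma \ref{lem:=0}, transported through $\mathcal{F}_{P|P^{\mathrm{op}}}$ and run for arbitrary $m$ rather than only $m\le 0$. So your route makes $\widehat{F}$, the separate upward induction, and the $P\leftrightarrow P^{\mathrm{op}}$ swap unnecessary, and is somewhat shorter. The paper's route, in exchange, avoids the witness lemma and records a standalone inclusion that is visibly symmetric in $P$ and $P^{\mathrm{op}}$.

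Your derivation of $\mathcal{F}_{P|P^{\mathrm{op}}}(D)\in W_{X_{P^{\mathrm{op}}},-\lambda}$ from Corollary \ref{cor:E} also fills in a step the paper states without justification.
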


\begin{proof}
We show that for any $m\in \zz$
\begin{align}\label{eq:easyF2}
    \widehat{F}_mW_{X_P}\subseteq F_m^{\mathrm{ord}}W_{X_P}.
\end{align}
When $m\le 0,$ this follows from Lemma \ref{lem:=0}. Suppose $m>0.$ Let $D\in \widehat{F}_mW_{X_P,\lambda}$. By the definition of order and induction, it suffices to show $[D,f]\in \widehat{F}_{m-1}W_{X_P}$ for all $f\in K[X_P].$ We may assume $0\neq f\in V^{\lor}_{-j\omega_P}.$ Then
\begin{align*}
    \operatorname{ord}(\mathcal{F}_{P|P^{\mathrm{op}}}([D,f]))&=\operatorname{ord} ([\mathcal{F}_{P|P^{\mathrm{op}}}(D),\mathcal{F}_{P|P^{\mathrm{op}}}(f)])\\
    &\le (m+\lambda d_P)+jd_P-1 =(m-1)+(\lambda+j)d_P.
\end{align*}
Since $[D,f]\in W_{X_P,\lambda+j},$ $[D,f]\in \widehat F_{m-1}W_{X_P}$. This proves \eqref{eq:easyF2}.

Let $D\in F_m^{\mathrm{ord}}W_{X_P,\lambda}$ so that $ A:=\mathcal{F}_{P|P^{\mathrm{op}}}(D)\in W_{X_{P^{\mathrm{op}}},-\lambda}$. As
\begin{align*}
\mathrm{ord}(\mathcal{F}_{P^{\mathrm{op}}|P}(A))=\mathrm{ord}_P(D)\le m=
(m+\lambda d_P)+(-\lambda)d_P,
\end{align*}
we have $A\in \widehat{F}_{m+\lambda d_P}W_{X_{P^{\mathrm{op}}}}.$ Applying \eqref{eq:easyF2} for $P^{\mathrm{op}},$ we conclude that $A\in F_{m+\lambda d_P}^{\mathrm{ord}}W_{X_{P^{\mathrm{op}}},-\lambda}.$ This proves the lemma.
\end{proof}


For a filtered $K$-algebra $(R,\mathcal{F}),$ its Rees algebra is a subalgebra of $R[\hbar]$ given by
\begin{align*}
    R_{\hbar}:=\bigoplus_{i\in \zz_{\ge 0}} R_{\hbar}^{i}:=\bigoplus_{i\in \zz_{\ge 0}} \hbar^{i}F_i.
\end{align*}
It is a graded $K[\hbar]$-algebra. One has a natural isomorphism of graded $K$-algebras
\begin{align*}
    R_{\hbar}/\hbar R_{\hbar}\cong \mathrm{gr}^{\mathcal{F}}R.
\end{align*}
Apply this construction to $W_{X_P}=\mathcal{D}(X_P)$ with respect to the order filtration.  Define for $D\in W_{X_P,\lambda}$ of order at most $m$
\begin{align*}
        \mathcal{F}_{P|P^{\mathrm{op}},\hbar}(\hbar^mD):=\hbar^{m+d_P\lambda} \mathcal{F}_{P|P^{\mathrm{op}}}(D).
\end{align*}
By Lemma \ref{lem:map}, this defines an homorphism of graded $K[\hbar]$-algebra 
\begin{align*}
        \mathcal{F}_{P|P^{\mathrm{op}},\hbar}: W_{X_P,\hbar}\longrightarrow W_{X_{P^{\mathrm{op}}},\hbar},
\end{align*}
and $\mathcal{F}_{P^{\mathrm{op}}|P,\hbar}\circ \mathcal{F}_{P|P^{\mathrm{op}},\hbar}=\mathrm{Id}.$ This induces an isomorphism
\begin{align}\label{eq:Fhdescends}
    \mathcal{F}_{P|P^{\mathrm{op}}}: \mathrm{gr}\, W_{X_P}\longrightarrow \mathrm{gr}\, W_{X_{P^{\mathrm{op}}}}
\end{align}

 Apply the construction of the Rees algebra to the sheaf $\mathcal{D}=\mathcal{D}_{X_P^\circ}$ with respect to the order filtration to obtain a sheaf of graded $K[\hbar]$-algebras $\mathcal{D}_{\hbar}.$ Let
\begin{align*}
    \mathcal{A}=\mathcal{A}_{X_P^\circ}:=\mathcal{D}_{\hbar}/\hbar\mathcal{D}_{\hbar}.
\end{align*}
Note that $\mathcal{A}$ is canonically isomorphic to $p_\ast \mathcal{O}_{T^\ast X_P^\circ}$, where $p:T^\ast X_P^\circ\to X_P^\circ$ is the natural projection of the cotangent bundle. Let $\mathcal{D}_{\hbar}(X_P^\circ):=\Gamma(X_P^\circ,\mathcal{D}_{\hbar})=\mathcal{D}(X_P^\circ)_{\hbar}$. Then $\mathcal{D}_{\hbar}(X_P^\circ)=W_{X_P,\hbar}$ by Theorem \ref{thm:Weyl=Diff}, and we have a natural inclusion of algebras
\begin{align*}
    \varphi:\mathrm{gr}\,W_{X_P}=\mathrm{gr}\, \mathcal{D}_{\hbar}(X_P^\circ)\cong \mathcal{D}_{\hbar}(X_P^\circ)/\hbar\mathcal{D}_{\hbar}(X_P^\circ)\hookrightarrow\Gamma(X_P^\circ,\mathcal{A})=K[T^\ast X_P^\circ].
\end{align*}

\begin{Thm}\label{thm:filtration}
Endow $W_{X_P}$ with the order filtration. Then we have a natural isomorphism of graded $K$-algebras
\begin{align*}
    \mathrm{gr}\,W_{X_P}=\mathrm{gr}\, \mathcal{D}(X_P)\xrightarrow{\sim} K[T^\ast X_P^\circ].
\end{align*}
In particular, the map $\mathcal{F}_{P|P^{\mathrm{op}}}:W_{X_P}\longrightarrow W_{X_{P^{\mathrm{op}}}}$ induces an isomorphism of $K$-algebras
    \begin{align*}
    \mathcal{F}_{P|P^{\mathrm{op}}}:K[T^\ast X_P^\circ]\longrightarrow K[T^\ast X_{P^\mathrm{op}}^\circ]
\end{align*}
such that $\mathcal{F}_{P^{\mathrm{op}}|P}\circ \mathcal{F}_{P|P^{\mathrm{op}}}=\mathrm{Id}.$\qed
\end{Thm}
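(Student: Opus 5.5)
We have the injective map $\varphi:\mathrm{gr}\,W_{X_P}\hookrightarrow K[T^\ast X_P^\circ]$, so the task is surjectivity. The idea, following \cite{GR:basic}, is to show that $K[T^\ast X_P^\circ]$ is generated as an algebra by two pieces that we can reach: the image of $\mathrm{gr}\,W_{X_P}$ itself, and the transport under the Fourier isomorphism \eqref{eq:Fhdescends} of the corresponding piece for $P^{\mathrm{op}}$. We then use that both pieces lie in the image of $\varphi$.

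\textbf{Step 1.} Identify what $\varphi$ hits directly. Principal symbols of $x_i$ give the pullbacks $p^\ast x_i$. Principal symbols of elements of $\mathfrak g\subseteq W_{X_P}$ (Proposition \ref{prop:liecontained}) give the moment map components $\mu^\ast\xi$, where $\mu:T^\ast X_P^\circ\to\mathfrak g^\ast$. The symbol of $E$ gives the $\mathfrak m^{\mathrm{ab}}$-moment map. On the open set $U=\{x_1\neq 0\}$, $K[T^\ast U]$ is generated over $K[U]$ by these symbols, since $\mathcal T(U)$ is generated by $\mathfrak g$. Hence for every $\sigma\in K[T^\ast X_P^\circ]$ there is $r$ with $x_1^r\sigma\in\mathrm{im}\,\varphi$. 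Using the $B$-action as in Theorem \ref{thm:Weyl=Diff}, and reducing to $B$-semi-invariant $\sigma$ via highest weight theory (all maps are $G$-equivariant), we get $b\,\sigma\in\mathrm{im}\,\varphi$ for all $b\in\mathcal B_r$.

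\textbf{Step 2.} Use the Fourier side to get the complementary multipliers. Applying Step 1 to $P^{\mathrm{op}}$ and pulling back through the algebra isomorphism \eqref{eq:Fhdescends}, which sends $\mathrm{gr}$ of $x_i^{\mathrm{op}}$ to $\mathrm{gr}$ of $\partial_i$, we want $\mathrm{sym}(b^{\mathrm{op}}(\partial))\,\sigma\in\mathrm{im}\,\varphi$ for $b\in\mathcal B_{r'}$. The difficulty is that \eqref{eq:Fhdescends} is only defined on $\mathrm{gr}\,W$, not on all of $K[T^\ast X_P^\circ]$. I would therefore extend it first, by showing that $\mathrm{gr}\,W_{X_P}\to K[T^\ast X_P^\circ]$ becomes an isomorphism after inverting $x_1$. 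The transported statement is that $\mathrm{gr}\,W_{X_P}$ localized at $\mathrm{sym}(\partial_1)$ also equals $K[T^\ast X_P^\circ]$ localized at $\mathrm{sym}(\partial_1)$, which I would check on the open subset where $\mathrm{sym}(\partial_1)\neq 0$.

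\textbf{Step 3.} Glue with the graded analogue of Corollary \ref{cor:critical}. The symbol of $Z_r(E)=\sum b\,b^{\mathrm{op}}(\partial)$ in the top degree is $J_0\,\epsilon^r$ up to a constant, where $\epsilon$ is the $\mathfrak m^{\mathrm{ab}}$-moment function, which is a multiple of $\sum x_i\xi_i$ and is a nonzero function. So from Steps 1 and 2 we get $\epsilon^{N}\sigma\in\mathrm{im}\,\varphi$ and $x_1^r\sigma\in\mathrm{im}\,\varphi$. These must be combined to conclude $\sigma\in\mathrm{im}\,\varphi$. For this I would use that the zero loci of $\{b: b\in\mathcal B_r\}$ and of the $\mathrm{sym}(b^{\mathrm{op}}(\partial))$ have empty common intersection in $T^\ast X_P^\circ$, where $x\neq0$ forces some $b\neq 0$. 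The natural argument would be a Hartogs or normality argument: show $\mathrm{Spec}\,\mathrm{gr}\,W_{X_P}$ is normal and the complement of the image of $T^\ast X_P^\circ$ has codimension $\ge2$.

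\textbf{Main obstacle.} The hardest step is this final gluing. The clean inhomogeneous trick of Corollary \ref{cor:critical} (coprimality of $Z_r(t)$ and $Z_r(-t-2s_k-2)$) degenerates under $\mathrm{gr}$ to powers of the single function $\epsilon$. It therefore has to be replaced by a geometric codimension argument, a quantization argument at the level of the Rees algebra $W_{X_P,\hbar}$ before setting $\hbar=0$, or both. My first attempt would be to run Corollary \ref{cor:critical} inside the Rees algebra, where $E$ quantizes to $\hbar$-deformed polynomials that remain coprime for $\hbar\neq0$. I would then take the result modulo $\hbar$, using $\hbar$-torsion-freeness of $\mathcal D_\hbar(X_P^\circ)$ together with $H^1$-vanishing properties of $\mathcal D_\hbar$ on $X_P^\circ$ to lift sections.
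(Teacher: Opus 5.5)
Your Step 1 is correct. It is the localization isomorphism $(\mathrm{gr}\,W_{X_P})_{x_i}\cong K[T^\ast X_P^\circ]_{x_i}$, which the paper also uses; the paper derives it from the $\lambda\ge0$ case, but your direct argument via Proposition \ref{prop:liecontained} works too. The proof nevertheless has a genuine gap exactly at the gluing you flag, and none of your proposed remedies closes it.

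\emph{Rees algebra.} Running Corollary \ref{cor:critical} in the Rees algebra fails for a structural reason. The homogenized $Z_r^{\hbar}(E)$ and $Z_r^{\hbar}(-E-(2s_k+2)\hbar)$ are coprime binary forms of degree $rd_P$ in $(E,\hbar)$. They generate an ideal containing $(E,\hbar)^{2rd_P-1}$ but never $1$. So you only recover $\hbar^N$ times a lift, i.e.\ a lift whose order is raised by $N$, which is precisely the loss to be avoided.

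\emph{$\hbar$-torsion and $H^1$.} This appeal is circular. The long exact sequence of $0\to\mathcal D_\hbar\xrightarrow{\hbar}\mathcal D_\hbar\to\mathcal A\to0$ on $X_P^\circ$ gives $\mathrm{coker}\,\varphi\cong\ker\bigl(\hbar\colon H^1(X_P^\circ,\mathcal D_\hbar)\to H^1(X_P^\circ,\mathcal D_\hbar)\bigr)$, so that torsion-freeness \emph{is} the theorem. Moreover, the obstruction class of $\sigma$ is already killed by $\hbar$, so knowing that the $b$ and $b^{\mathrm{op}}(\partial)$ also kill it adds nothing.

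\emph{Normality.} You have no independent access to normality, or even Noetherianity, of $\mathrm{gr}\,W_{X_P}$. The absence of common zeros on $T^\ast X_P^\circ$ is vacuous, since the $b\in\mathcal B_r$ alone already have none there. The issue lives on $\mathrm{Spec}\,\mathrm{gr}\,W_{X_P}$. Indeed the graded analogue of Corollary \ref{cor:critical} is false: by bidegree (order, $E$-weight), $1$ is not in the ideal of $\mathrm{gr}\,W_{X_P}$ generated by the symbols of the $x_i$ and $\partial_i$.

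\emph{Step 2.} As stated it is circular. Proving $(\mathrm{gr}\,W_{X_P})_{\mathrm{sym}(\partial_1)}=K[T^\ast X_P^\circ]_{\mathrm{sym}(\partial_1)}$ ``on the open set $\mathrm{sym}(\partial_1)\neq0$'' is a localized form of the very surjectivity you are after. The paper extends \eqref{eq:Fhdescends} differently. It maps $K[T^\ast X_P^\circ]_{x_i}\cong(\mathrm{gr}\,W_{X_P})_{x_i}$ into $K[T^\ast X_{P^{\mathrm{op}}}^\circ]_{\varphi(\partial_i^{\mathrm{op}})}$, which needs only injectivity of $\varphi$ for $P^{\mathrm{op}}$. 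It then glues by algebraic Hartogs in the normal Noetherian ring $K[T^\ast X_{P^{\mathrm{op}}}^\circ]$, never in $\mathrm{gr}\,W$.

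The missing ingredient is a genuine vanishing theorem, combined with a weight splitting that makes gluing unnecessary. After reducing to $K=\cc$, the paper decomposes $\mathcal D_\hbar(X_P^\circ)$ by $E$-weight $\lambda$. It identifies ${}^{(\lambda)}\mathcal D_\hbar(X_P^\circ)_\lambda\cong\mathrm{Ind}_P^G(M(\lambda))$ with $M(\lambda)=U_\hbar(\fg)\otimes_{U_\hbar(\fp)}{}^{(\lambda)}\cc[E,\hbar]$. It filters $M(\lambda)$ so that the associated graded is $\cc[(\fp^{\der}\backslash\fg)^\vee][\hbar]\otimes\cc_\lambda$. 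Broer's theorem $H^i(T^\ast\mathcal P,\mathcal O(\lambda))=0$ for $i>0$, $\lambda\ge0$, then gives $R^1\mathrm{Ind}_P^G(M(\lambda))=0$, hence surjectivity of $\varphi$ in all weights $\lambda\ge0$. The Fourier isomorphism on $K[T^\ast X_P^\circ]$, which sends weight $\lambda$ to $-\lambda$, transports the $P^{\mathrm{op}}$ result to the weights $\lambda\le0$.

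So no single $\sigma$ is ever glued from the two sides: each weight is handled entirely on one side. This is how the paper sidesteps the collapse of $Z_r(E)$ and $Z_r(-E-2s_k-2)$ to the same top symbol, which is a constant times $\epsilon^{rd_P}$ (not $\epsilon^r$ as you wrote). Some cohomological input of this kind is needed, and your plan supplies none.
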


\begin{proof}
We may assume $K=\cc$. For ease of notation, we let $X:=X_P^\circ$ and write $\otimes=\otimes_\cc.$ Note that we have a natural isomorphism $T^\ast X\cong (\mathfrak{p}^{\mathrm{der}}\backslash \mathfrak{g})^\vee\times ^{P^{\mathrm{der}}} G,$ so $T^\ast X$ is a $P^{\mathrm{ab}}=M^{\mathrm{ab}}$-torsor of $\widetilde{X}:= (\mathfrak{p}^{\mathrm{der}}\backslash \mathfrak{g})^\vee\times ^P G.$ Let $\mathcal{P}:=P\backslash G$ and $\mathcal{O}_{\mathcal{P}}(\lambda)$ be the line bundle on $\mathcal{P}$ associated to $\lambda\omega_P$ for $\lambda\in \zz.$ For any variety $Y$ over $\mathcal{P},$ let $\mathcal{O}_{Y}(\lambda)$ be the pullback to $Y$ of $\mathcal{O}_{\mathcal{P}}(\lambda).$ By the natural $G$-equivariant map $\widetilde{X}\longrightarrow \mathcal{P},$ we have 
\begin{align*}
    \cc[T^\ast X]\cong \bigoplus_{\lambda\in \zz} \Gamma(\widetilde{X},\mathcal{O}_{\widetilde{X}}(\lambda))= \bigoplus_{\lambda\in \zz} \mathrm{Ind}_P^G(\cc[(\fp^\der\backslash \fg)^\lor]\otimes \cc_{\lambda}),
\end{align*}
where $\cc_\lambda$ is the $1$-dimensional representation $\lambda\omega_P$ of $P$ and $\mathrm{Ind}_P^G$ is the induction functor (c.f. \cite[I.3]{Jantzen:rep}).

Note that $\cc[E,\hbar]$ is a subalgebra of $\mathcal{D}_\hbar(X).$ Decompose
\begin{align*}
    \mathcal{D}_{\hbar}(X)=\bigoplus_{\lambda \in \zz} \mathcal{D}_{\hbar}(X)_\lambda
\end{align*}
as eigenspaces of the operator $[E,\cdot].$ Then 
\begin{align*}
     \mathrm{gr}\, \mathcal{D}(X)\cong \bigoplus_{\lambda\in \zz}  \mathcal{D}_{\hbar}(X)_\lambda/\hbar \mathcal{D}_{\hbar}(X)_\lambda.
\end{align*}
Since $\varphi$ is $M^{\mathrm{ab}}\times G$-equivariant, by the $M^{\mathrm{ab}}$-action, $\varphi$ is surjective if and only if for all $\lambda\in \zz$ 
\begin{align*}
    \mathcal{D}_{\hbar}(X)_\lambda/\hbar \mathcal{D}_{\hbar}(X)_\lambda\longrightarrow\mathrm{Ind}_P^G(\cc[(\fp^\der\backslash \fg)^\lor]\otimes\cc_{\lambda})
\end{align*}
is surjective. 

For $\lambda\in \zz$ and a left $\cc[E,\hbar]$-module $M$, let $^{(\lambda)}M$ be the left $\cc[E,\hbar]$-module whose underlying space is the $\cc[\hbar]$-module $M$ with a twisted $E$-action: 
\begin{align*}
    E\cdot^{(\lambda)} m:=(E+(\lambda+s_k+1)\hbar)\cdot m, \quad\forall m\in M. 
\end{align*} 
Then 
\begin{align*}
    \mathcal{D}_{\hbar}(X)_\lambda/\hbar \mathcal{D}_{\hbar}(X)_\lambda=\bigoplus_{\lambda\in \zz}{}^{(\lambda)}\mathcal{D}_{\hbar}(X)_\lambda/\hbar {}^{(\lambda)}\mathcal{D}_{\hbar}(X)_\lambda.
\end{align*}
Therefore, to prove the theorem, we only need to show $\varphi$ restricts to an isomorphism
\begin{align*}
    {}^{(\lambda)}\mathcal{D}_{\hbar}(X)_\lambda/\hbar{}^{(\lambda)}\mathcal{D}_{\hbar}(X)_\lambda\cong  \mathrm{Ind}_P^G(\cc[(\fp^\der\backslash \fg)^\lor]\otimes\cc_{\lambda}).
\end{align*}

The module $\cc[G]$ admits a regular $G\times G$-action. Let $U(\mathfrak{g})$ be the universal enveloping algebra of $\mathfrak{g}$ and $U_{\hbar}(\fg)$ be its Rees algebra with respect to the PBW filtration. Differentiating the left regular action induces an algebra homomorphism $U_{\hbar}(\fg)\longrightarrow \mathcal{D}_\hbar(\fg),$ which induces a left $\cc[G]$-module isomorphism 
\begin{align*}
    \cc[G]\otimes U_{\hbar}(\mathfrak{g})\cong D_{\hbar}(G).
\end{align*}
Any element $D\in D_{\hbar}(G)\cong \cc[G]\otimes U_{\hbar}(\mathfrak{g})$ induces a morphism $\cc[X_P][\hbar]\longrightarrow \cc[G][\hbar]$. The morphism is trivial iff $D\in \cc[G]\otimes \left(  U_{\hbar}(\mathfrak{g})\cdot\fp^\der\right).$ Therefore,
\begin{align}\label{eq:DtoU}
    \mathcal{D}_{\hbar}(X)\cong \left(\cc[G]\otimes \left( U_{\hbar}(\mathfrak{g}) /  U_{\hbar}(\mathfrak{g})\cdot\fp^\der\right) \right)^{P^{\mathrm{der}}}.
\end{align}
Under this isomorphism, the $G$-action on $D_\hbar(X)$ translates to the right regular $G$-action on $\cc[G].$ The $P$-action on $D_{\hbar}(X),$ where $P^{\mathrm{der}}$ acts trivially, equips $U_{\hbar}(\mathfrak{g}) /U_{\hbar}(\mathfrak{g})\cdot\fp^\der$ with a $P$-module structure. Let us describe this $P$-module structure in an alternative way.

Let $\lambda\in \zz$. The algebra isomorphism $\cc[E,\hbar]\cong \fp^{\mathrm{der}}\cdot U_{\hbar}(\fp)\backslash U_{\hbar}(\fp)$ gives  $^{(\lambda)}\cc[E,\hbar]$ a bi-$U_{\hbar}(\mathfrak{p})$-module structure for $\lambda\in \zz.$ Consider $$M(\lambda):=U_\hbar(\fg)\otimes_{{U_h(\fp)}}{}^{(\lambda)}\cc[E,\hbar].$$  The action of $U_\hbar(\fg)$ on $v_\lambda:=1\otimes 1\in M(\lambda)$ induces an isomorphism of left $U_\hbar(\fg)$-modules
\begin{align*}
    M(\lambda)\cong U_{\hbar}(\fg)/U_{\hbar}(\fg)\cdot \fp^{\mathrm{der}}.
\end{align*}
Under this isomorphism, $E$ acts on the left on $U_{\hbar}(\fg)/U_{\hbar}(\fg)\cdot \fp^{\mathrm{der}}$ by $E+(\lambda+s_k+1)\hbar.$ Then the $P$-module action on $M(\lambda)$ is the exponential of the action 
\begin{align*}
    (E-(s_k+1)\hbar)\cdot m-m\cdot E, \quad \forall m\in M(\lambda). 
\end{align*}
It follows that \eqref{eq:DtoU} induces an isomorphism of graded $\cc[E,\hbar]$-modules
\begin{align*}
     {}^{(\lambda)}\mathcal{D}_{\hbar}(X)_\lambda\cong \mathrm{Ind}_P^G(M(\lambda)).
\end{align*}

In conclusion to prove the assertion we only need to show the natural short exact sequence
\begin{align*}
    0\longrightarrow \hbar M(\lambda)\longrightarrow M(\lambda)\longrightarrow M(\lambda)/\hbar M(\lambda)\cong \cc[(\fp^\der\backslash \fg)^\lor]\otimes\cc_{\lambda}\longrightarrow 0
\end{align*}
induces a short exact sequence
\begin{align*}
    0\longrightarrow \hbar \mathrm{Ind}_P^G (M(\lambda))\longrightarrow \mathrm{Ind}_P^G(M(\lambda))\longrightarrow \mathrm{Ind}_P^G(\cc[(\fp^\der\backslash \fg)^\lor]\otimes\cc_{\lambda})\longrightarrow 0.
\end{align*}

Assume $\lambda\ge 0$. It suffices to show $R^1\mathrm{Ind}_P^G(M(\lambda))=0.$ Let $(U_{\hbar}^{\le i}(\fg))_{i\ge 0}$ be the PBW filtration of $U_{\hbar}(\fg)$. For $i\ge 0,$ let $M_i:=U_{\hbar}^{\le i}(\fg)\cdot v_\lambda\cdot \cc[E,\hbar]$. Each $M_i$ is $P$-stable and $\cc[E,\hbar]$-stable, and $(M_i)_{i\ge 0}$ is a filtration of $M(\lambda)$ whose associated graded module is isomorphic to $\cc[(\fp^\der\backslash \fg)^\lor][\hbar]\otimes\cc_{\lambda}.$ In particular, we have a filtration
\begin{align*}
    R^1\mathrm{Ind}_P^G(M(\lambda))= \bigcup _{i\ge 0} R^1\mathrm{Ind}_P^G M_i.
\end{align*}
By \cite[Theorem 2.2]{Broer} for $\lambda\ge 0$
\begin{align*}
    H^i(T^\ast \mathcal{P},\mathcal{O}_{T^\ast \mathcal{P}}(\lambda))=0
\end{align*}
for all $i>0$. Since $T^\ast \mathcal{P}\cong  (\mathfrak{p}\backslash \mathfrak{g})^\lor \times^PG,$ we have $H^i(\widetilde{X} ,\mathcal{O}_{\widetilde{X}}(\lambda))=0$
for all $i>0$. Equivalently
\begin{align*}
    R^i\mathrm{Ind}_P^G(\cc[(\fp^\der\backslash \fg)^\vee]\otimes \cc_{\lambda})=0.
\end{align*}
This implies $R^i\mathrm{Ind}_P^G(M(\lambda))=0$ as
its associated graded module is  $R^i\mathrm{Ind}_P^G(\cc[(\fp^\der\backslash \fg)^\lor]\otimes\cc_{\lambda})\otimes \cc[\hbar]=0.$ 

Previous discussions imply that $\varphi$ restricting to nonnegative weight spaces of $M^{\mathrm{ab}}$
\begin{align*}
    \varphi|_{\lambda\ge 0}: (\mathrm{gr} \, W_{X_P})_{\lambda \ge 0}=\bigoplus_{\lambda\ge 0} (\mathrm{gr} \,W_{X_P})_{\lambda}\longrightarrow\cc[T^\ast X_P^\circ]_{\lambda\ge 0}
\end{align*}
is an isomorphism. For any $i,$ consider localization by $x_i$. We have $$\cc[T^\ast X_P^\circ]_{x_i}=(\cc[T^\ast X_P^\circ]_{\lambda\ge 0})_{x_i}\cong ((\mathrm{gr} \, W_{X_P})_{\lambda \ge 0})_{x_i}=(\mathrm{gr}\, W_{X_P})_{x_i}.$$ Thus \eqref{eq:Fhdescends} induces an injective homomorphism of $\cc$-algebras
\begin{align*}
    \mathcal{F}_{P|P^{\mathrm{op}}}:\cc[T^\ast X_P^\circ]_{x_i}\cong (\mathrm{gr} \, W_{X_P})_{x_i}\longrightarrow (\mathrm{gr} \, W_{X_{P^{\mathrm{op}}}})_{\partial_i^{\mathrm{op}}}\hookrightarrow \cc[T^\ast X_{P^{\mathrm{op}}}^\circ]_{\varphi(\partial_i^{\mathrm{op}})}.
\end{align*}
Consider the commutative diagram
\begin{center}
    \begin{tikzcd}
        0 \ar[r] & \cc[T^\ast X_P^\circ] \ar[r]\ar[d,dotted] & \bigoplus_{i=1}^{\dim V_P}\cc[T^\ast X_P^\circ]_{x_i}\ar[r]\ar[d,"\mathcal{F}_{P|P^{\mathrm{op}}}"] &\bigoplus_{i,j} \cc[T^\ast X_P^\circ]_{x_ix_j}\ar[d,"\mathcal{F}_{P|P^{\mathrm{op}}}"]\\
        0 \ar[r] & \cc[T^\ast X_{P^{\mathrm{op}}}^\circ] \ar[r] & \bigoplus_{i=1}^{\dim V_P}\cc[T^\ast X_{P^\mathrm{op}}^\circ]_{\varphi(\partial_i^{\mathrm{op}})}\ar[r] &\bigoplus_{i,j}\cc[T^\ast X_{P^{\mathrm{op}}}^\circ]_{\varphi(\partial_i^{\mathrm{op}})\varphi(\partial_j^{\mathrm{op}})}.
    \end{tikzcd}
\end{center}
Since $\cc[T^\ast X_P^\circ]$ and  $\cc[T^\ast X_{P^{\mathrm{op}}}^\circ]$ are Noetherian \cite{FuLiu} and integrally closed, by the algebraic Hartogs' lemma the rows are exact. In particular, the dotted map is defined
\begin{align*}
    \mathcal{F}_{P|P^{\mathrm{op}}}:\cc[T^\ast X_P^\circ]\longrightarrow\cc[T^\ast X_{P^\mathrm{op}}^\circ].
\end{align*}
Furthermore, $\mathcal{F}_{P^{\mathrm{op}}|P}\circ \mathcal{F}_{P|P^{\mathrm{op}}}=\mathrm{Id}.$ Thus $ \mathcal{F}_{P|P^{\mathrm{op}}}$ is an isomorphism. By Corollary \ref{cor:E}, for any $\lambda\le 0$ we have a commutative diagram
\begin{center}
    \begin{tikzcd}
        (\mathrm{gr} \, W_{X_P})_{\lambda} \ar[rr,"\varphi"]\ar[d,"\mathcal{F}_{P|P^{\mathrm{op}}}"] & &\cc[T^\ast X_P^\circ]_{\lambda} \ar[d,"\mathcal{F}_{P|P^{\mathrm{op}}}"]\\ (\mathrm{gr} \, W_{X_{P^{\mathrm{op}}}})_{-\lambda} \ar[rr,"\varphi"]& &\cc[T^\ast X_{P^{\mathrm{op}}}^\circ]_{-\lambda}.
    \end{tikzcd}
\end{center}
As the bottom horizontal map is an isomorphism, the top map is also an isomorphism. This completes the proof.
\end{proof}

\subsection{Category of D-modules.}
Let $\mathcal{M}(\mathcal{D}_{X_{P}^\circ})$ be the category of quasi-coherent sheaves of left $D$-modules on $X_{P}^\circ$ and $\mathcal{M}(W_{X_P})$ be the category left $W_{X_P}$-modules. For each $\mathcal{E}\in \calM(\mathcal{D}_{X_P^\circ}),$  $\Gamma(X_P^\circ,\mathcal{E})$ is a left $\mathcal{D}(X_P^\circ)=\mathcal{D}(X_P)$-module, so by Theorem \ref{thm:Weyl=Diff} we have a global section functor
\begin{align*}
    \Gamma: \calM(\mathcal{D}_{X_P^\circ}) &\longrightarrow \calM(W_{X_P})\\
     \mathcal{E} & \mapsto \Gamma(X_P^\circ, \mathcal{E}).
\end{align*}
On the other hand, we have a localization functor 
\begin{align*}
    \mathcal{L}: \calM(W_{X_P}) \longrightarrow \calM(\mathcal{D}_{X_P^\circ})      
\end{align*}
by associating to each $M\in \calM(W_{X_P})$ a sheaf $\mathcal{L}(M)$ such that for an affine open subscheme $U\subset X_P^\circ$, 
\begin{align*}
\mathcal{L}(M)(U)=\mathcal{D}(U)\otimes_{ \mathcal{D}(X_P)}M=K[U]\otimes_{K[X_P]} M.
\end{align*}
Therefore, $\mathcal{L}$ is exact, $\mathcal{L}\circ \Gamma=\mathrm{Id},$ and $\mathcal{L}$ is the left adjoint of $\Gamma,$ i.e.,
\begin{align*}
    \mathrm{Hom}_{\mathcal{M}(\mathcal{D}_{X_P^\circ})}(\mathcal{L}(M),\mathcal{E})=\mathrm{Hom}_{\mathcal{M}(W_{X_P})}(M,\Gamma(\mathcal{E})).
\end{align*}
We use the same notations for functors defined for $X_{P^{\mathrm{op}}}^\circ.$

For a left $W_{X_P}$-module $M$, we let $M^w$ be the left $W_{X_{P^{\mathrm{op}}}}$-module whose underlying space is $M$, and its module structure is given by
\begin{align*}
    x_i^{\mathrm{op}}\cdot m:=\partial_i\cdot m,\quad \partial_i^{\mathrm{op}}\cdot m:=x_i\cdot m
\end{align*}
for all $m\in M^w$ and $i$. Therefore, we have a natural equivalence functor
\begin{align*}
    w: \mathcal{D}(W_{X_P})&\longrightarrow \mathcal{D}(W_{X_{P^\mathrm{op}}})\\
    M&\longmapsto M^w.
\end{align*}
We let $w^{-1}$ be its inverse. Set
\begin{align*}
    \mathcal{L}_w&:=\mathcal{L}\circ w:\mathcal{M}(W_{X_P})\longrightarrow \mathcal{M}(\mathcal{D}_{X_{P^{\mathrm{op}}}^\circ}),\\
    \Gamma_w&:=w^{-1}\circ \Gamma:\mathcal{M}(\mathcal{D}_{X_{P^{\mathrm{op}}}^\circ})\longrightarrow \mathcal{M}(W_{X_P}).
\end{align*}

\begin{Lem}\label{lem:localization}
Let $M$ be a nonzero left $W_{X_P}$-module. Then either $\mathcal{L}(M)$ or $\mathcal{L}_w(M)$ is nonzero.
\end{Lem}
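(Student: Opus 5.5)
We argue by contradiction, assuming $\mathcal{L}(M)=0$ and $\mathcal{L}_w(M)=0$, and show that $M=0$ via Corollary \ref{cor:critical}. The first step is to translate the vanishing of $\mathcal{L}(M)$ into a torsion statement. Since $X_P^\circ$ is covered by the affine opens $U_i=\{x_i\neq 0\}$ for $1\le i\le \dim V_P$ (the complement of $X_P^\circ$ is the single point mapped to the origin of $V_P$), we get
\[
\mathcal{L}(M)=0 \iff K[X_P]_{x_i}\otimes_{K[X_P]}M=0 \text{ for all } i.
\]
Hence every $m\in M$ is annihilated by some power of each $x_i$, and so by some power of the maximal ideal $\mathfrak{p}$ generated by $\mathcal{B}_1$. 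In other words, $M$ is $\mathfrak{p}$-torsion as a $K[X_P]$-module.

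The same argument applies on the opposite side. The condition $\mathcal{L}_w(M)=\mathcal{L}(M^w)=0$ says that $M^w$ is torsion for the ideal of $K[X_{P^{\mathrm{op}}}]$ generated by $x_1^{\mathrm{op}},\ldots,x_{\dim V_P}^{\mathrm{op}}$. Because $x_i^{\mathrm{op}}$ acts on $M^w$ as $\partial_i$ acts on $M$, this means every $m\in M$ is killed by some power of the ideal generated by the $\partial_i$ inside the commutative subalgebra $K[X_{P^{\mathrm{op}}}]\subseteq W_{X_P}$ (the $\partial_i$ commute, being identified with coordinate functions on $X_{P^{\mathrm{op}}}$).

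Now fix $m\in M$. Choose $N$ so that every monomial of degree $N$ in the $x_i$ and every monomial of degree $N$ in the $\partial_i$ annihilates $m$. The space $V^\lor_{-N\omega_P}$ is the degree-$N$ part of $K[X_P]$ and so is spanned by images of degree-$N$ monomials in $x_1,\ldots,x_{\dim V_P}$. Therefore $b(\mathbf{x})m=0$ for all $b\in\mathcal{B}_N$. By the same reasoning in $K[X_{P^{\mathrm{op}}}]$, we get $b^{\mathrm{op}}(\partial)m=0$ for all $b\in\mathcal{B}_N$. Corollary \ref{cor:critical}(1), applied with $r=N$ and the submodule $0\subseteq M$, then gives $m=0$. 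Since $m$ was arbitrary, $M=0$, a contradiction.

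The only point that needs care is the first step: identifying the vanishing of the sheaf $\mathcal{L}(M)$ with $\mathfrak{p}$-torsion. This needs that the $U_i$ cover $X_P^\circ$, and that $\mathcal{L}(M)(U_i)=K[X_P]_{x_i}\otimes_{K[X_P]} M$. The latter is exactly the formula defining $\mathcal{L}$, and it uses $\mathcal{D}(U_i)=(W_{X_P})_{x_i}$ as in the proof of Theorem \ref{thm:Weyl=Diff}. With these in hand, the remaining steps are routine.
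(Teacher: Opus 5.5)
Your proposal is correct and is essentially the paper's argument. The vanishing of $\mathcal{L}(M)$ and of $\mathcal{L}_w(M)=\mathcal{L}(M^w)$ forces each $m\in M$ to be killed by $b(\mathbf{x})$ and $b^{\mathrm{op}}(\partial)$ for all $b\in\mathcal{B}_r$ with $r$ large, and Corollary \ref{cor:critical} then gives $m=0$. The only difference is that you spell out the torsion step the paper leaves implicit: the cover of $X_P^\circ$ by the opens $\{x_i\neq 0\}$, and the spanning of the degree-$r$ pieces by degree-$r$ monomials.
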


\begin{proof}
     Suppose $\mathcal{L}(M)=0$. This implies $M$ is supported at the origin, i.e., for each $m\in M$ there is $r\ge 0$ such that one has $b(\mathbf{x})m=0$ for all $b\in \mathcal{B}_r$. Thus if both $\mathcal{L}(M)$ and $\mathcal{L}_w(M)$ are zero, then for each $m$ there is $r$ such that $b(\mathbf{x})m=b^{\mathrm{op}}(\partial)m=0$
    for all $b\in \mathcal{B}_r$. Corollary \ref{cor:critical} implies $M=0$.
\end{proof}

\begin{Thm}\label{thm:properties}
\begin{enumerate}
    \item The algebra $W_{X_P}$ is Noetherian.
    \item The injective dimension of $W_{X_P}$ as a left $W_{X_P}$-module is $\dim X_P$.
    \item The algebra $W_{X_P}$ is Auslander-Gorenstein and Cohen-Macaulay.
\end{enumerate}  
\end{Thm}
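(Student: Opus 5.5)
The plan is to follow the gluing strategy of \cite{BBP:Gluingbasicaffine}, as adapted in \cite{Stafford, GR:basic}. All three assertions will be reduced to the corresponding properties of the sheaves $\mathcal{D}_{X_P^\circ}$ and $\mathcal{D}_{X_{P^{\mathrm{op}}}^\circ}$ on the smooth quasi-affine varieties $X_P^\circ$, $X_{P^{\mathrm{op}}}^\circ$. The two charts are related by the Fourier isomorphism of Lemma \ref{eq:Fourierauto}, and Lemma \ref{lem:localization} guarantees that together they detect every nonzero module.

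\textbf{(1) Noetherianity.} I would show that the combined functor
\[
M\mapsto (\mathcal{L}(M),\mathcal{L}_w(M))
\]
is faithful on submodule lattices. Let $N\subsetneq M$ be left $W_{X_P}$-modules. Then $M/N\neq0$, so by exactness of $\mathcal{L}$ and $\mathcal{L}_w$ together with Lemma \ref{lem:localization}, either $\mathcal{L}(N)\subsetneq\mathcal{L}(M)$ or $\mathcal{L}_w(N)\subsetneq \mathcal{L}_w(M)$. Now take an ascending chain of left ideals in $W_{X_P}$. It produces ascending chains of subsheaves of $\mathcal{L}(W_{X_P})$ and of $\mathcal{L}_w(W_{X_P})$. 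On an affine cover $U_i=\{x_i\neq0\}$ of $X_P^\circ$, the sheaf $\mathcal{L}(W_{X_P})$ has sections $K[U_i]\otimes_{K[X_P]}W_{X_P}=(W_{X_P})_{x_i}=\mathcal{D}(U_i)$, by the proof of Theorem \ref{thm:Weyl=Diff}. This is a Noetherian ring because $U_i$ is smooth affine. Hence both chains stabilize on finitely many charts, and the faithfulness above forces the original chain to stabilize. Right Noetherianity follows by the same argument for right modules, or via the anti-involution given by the adjoint.

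\textbf{(2) and (3).} I would use the Bernstein inequality (Theorem \ref{thm:Bernstein}) together with Theorem \ref{thm:filtration}. The plan is to show that for every finitely generated $M$ and every $i$, $\mathrm{Ext}^i_{W_{X_P}}(M,W_{X_P})$ can be computed chartwise. Since the localizations $(W_{X_P})_{x_i}$ and their Fourier counterparts $(W_{X_P})_{\partial_i}$ are flat over $W_{X_P}$ (they are Ore localizations by Corollary \ref{cor:Ore}), we get
\[
\mathrm{Ext}^j(M,W_{X_P})_{x_i}\cong \mathrm{Ext}^j_{\mathcal{D}(U_i)}(M_{x_i},\mathcal{D}(U_i)).
\]
The rings $\mathcal{D}(U_i)$ are Auslander--Gorenstein of injective dimension $\dim X_P$ and Cohen--Macaulay with respect to GK-dimension, by the classical theory for smooth affine varieties. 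A module killed by all these localizations is zero by the argument of Lemma \ref{lem:localization} and Corollary \ref{cor:critical}. This yields $\mathrm{Ext}^j(M,W_{X_P})=0$ for $j>\dim X_P$, and the Auslander condition and the identity $j(M)+\mathrm{GK}(M)=2\dim X_P$ transfer from the charts. Nonvanishing at $j=\dim X_P$ comes from a holonomic module such as $K[X_P]$, whose GK-dimension is $\dim X_P$. Injective dimension is then bounded via Baer's criterion using Noetherianity from (1).

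The main obstacle is this transfer step. An $\mathrm{Ext}$ group supported at the cone point in one chart must be nonzero in the Fourier chart. The delicate part is checking that the Fourier-twisted localizations at $\partial_i$ are compatible with the left/right structure and with GK-dimension. I would first try to handle this with the $E$-grading and the coprimality of Lemma \ref{lem:coprime}, mirroring the proof of Lemma \ref{lem:localization}.
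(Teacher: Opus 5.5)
Your proof of (1) is the argument of \cite{BBP:Gluingbasicaffine} that the paper invokes. Your chartwise computation of $\mathrm{Ext}$ via the localizations at $x_i$ and $\partial_i$ is a sound route to (2) and to the Auslander condition in (3). For finitely generated $M$ over the Noetherian ring $W_{X_P}$, $\mathrm{Ext}^j(M,W_{X_P})$ localizes to $\mathrm{Ext}$ over $\mathcal{D}(U_i)$ and over $\mathcal{D}(\{x_i^{\mathrm{op}}\neq 0\})$, and a right module killed by all these localizations is zero.

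Two small corrections. First, the Ore condition for $\{x_i^n\}$ and $\{\partial_i^n\}$ comes from local ad-nilpotence of $x_i$ and $\partial_i$, not from Corollary~\ref{cor:Ore}. Second, for the lower bound in (2), compute $\mathrm{Ext}^{\dim X_P}(K[X_P],W_{X_P})_{x_i}\cong \mathrm{Ext}^{\dim X_P}_{\mathcal{D}(U_i)}(K[U_i],\mathcal{D}(U_i))\neq 0$ directly, rather than appealing to the GK-dimension of $K[X_P]$ over $W_{X_P}$. That appeal would presuppose the Cohen--Macaulay property.

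The step you flag as the main obstacle is not one. The proof of Lemma~\ref{lem:localization} works verbatim for right modules, using both identities of Lemma~\ref{Lem:unitgen} and $E\in W_{X_P}$ (Proposition~\ref{prop:E}). So an $\mathrm{Ext}$ group that dies on every $x_i$-chart is automatically detected on some $\partial_i$-chart.

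The genuine gap is the Cohen--Macaulay identity $j(M)+\mathrm{GK}(M)=2\dim X_P$. The charts only give $j(M)=2\dim X_P-\max_c \mathrm{GK}(M_c)$. Here $M_c$ ranges over $S_i^{-1}M$ and $S_i'^{-1}M^w$, and the GK-dimension is computed over the chart rings. Nothing in your plan identifies $\max_c\mathrm{GK}(M_c)$ with $\mathrm{GK}(M)$ computed using the Bernstein filtration. GK-dimension has no a priori compatibility with Ore localization, and the $E$-grading together with Lemma~\ref{lem:coprime} only detects vanishing, not growth.

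This comparison is exactly Lemma~\ref{lem:multiplicity}, the input the paper feeds into the argument of \cite[Theorem 3.3]{Stafford}. There are two bounds:
\begin{enumerate}
\item The upper bound $\mathrm{GK}(M)\le\max_c\mathrm{GK}(M_c)$ comes from the embedding $M\hookrightarrow\bigoplus_i S_i^{-1}M\oplus S_i'^{-1}M^w$ together with $B_1\subseteq F_i^t$.
\item The lower bound uses local ad-nilpotence of $x_i$ to get $F_i^k\subseteq x_i^{-(r+q-1)k}V^{qk}$ with $V^q\subseteq B_1^s$. Hence $\dim F_i^kM_0\le\dim B_1^{sk}M_0$.
\end{enumerate}
Without this growth comparison, your argument yields Auslander--Gorenstein but not Cohen--Macaulay.
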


\begin{proof}
    Thanks to Lemma \ref{lem:localization}, (1) and (2) can be proved using the same argument as in \cite[Theorem 1.1 and Theorem 1.2]{BBP:Gluingbasicaffine}. For (3), one can proceed the proof as in \cite[Theorem 3.3]{Stafford} using the lemma below, which is a refinement of \cite[Lemma 3.2]{Stafford}.
\end{proof}

\begin{Lem}\label{lem:multiplicity}
    Let $M$ be a nonzero finitely generated left $W_{X_P}$-module. There is a positive integer $n$ and a constant $c_1>0$ such that for any good filtration $\Gamma=(\Gamma_k)$ of $M$ with respect to the Bernstein filtration
    \begin{align*}
         \dim \Gamma_k\le c_1k^{n}
    \end{align*}
    for $k$ large. Furthermore, there is an absolute constant $c>0$ independent of $M$ and $\Gamma$ such that
    \begin{align*}
        ck^{n}\le \dim \Gamma_k
    \end{align*}
    for $k$ large. In particular, $\mathrm{GK}(M)=n$ is an integer.
\end{Lem}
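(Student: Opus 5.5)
The plan is to treat the upper bound (with the integer $n$) and the lower bound separately.

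\emph{Upper bound.} The lower bound is essentially already in hand. Theorem \ref{thm:Bernstein} gives $\dim\Gamma_k\ge ck^{\dim X_P}$ for standard filtrations, with $c$ independent of $M$. For an arbitrary good filtration, Lemma \ref{lem:prebernstein} still applies after shifting indices so that $\Gamma_0\neq 0$, since that lemma only uses $B_k\Gamma_i\subseteq\Gamma_{i+k}$. Then injectivity of $B_k\to\mathrm{Hom}_K(\Gamma_{kc'},\Gamma_{kc'+k})$ together with $\dim B_k\ge c''k^{2\dim X_P}$ (from Lemma \ref{lem:naivebase} and the Hilbert polynomial of $K[X_P\times X_{P^{\mathrm{op}}}]$) gives $\dim\Gamma_{k(c'+1)}^2\ge c''k^{2\dim X_P}$. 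The argument therefore yields $ck^{\dim X_P}\le\dim\Gamma_k$. To match the exponent $n=\mathrm{GK}(M)$ in the statement, I would prove the upper bound $\dim\Gamma_k\le c_1k^n$ with $n$ an integer, and separately a lower bound $\dim\Gamma_k\ge ck^n$ with the same $n$, where $c$ may be taken uniform by the same Bernstein-type estimate applied to a critical subquotient.

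\emph{Integrality and upper bound.} The key step is to replace the noncommutative Bernstein filtration by a filtration whose associated graded algebra is commutative and finitely generated. The natural choice is the order filtration, where Theorem \ref{thm:filtration} gives $\mathrm{gr}\,W_{X_P}\cong K[T^\ast X_P^\circ]$, which is finitely generated and Noetherian by \cite{FuLiu}. Refining by the $E$-grading, I would define a filtration $\mathcal{G}$ mixing order and $E$-weight, namely $\mathcal{G}_m=\sum_{a+d_P|\lambda|\le m}F_a^{\mathrm{ord}}W_{X_P,\lambda}$ suitably symmetrized using $\mathcal{F}_{P|P^{\mathrm{op}}}$ and Lemma \ref{lem:map}. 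The aim is that
\begin{align*}
B_k\subseteq\mathcal{G}_{C k}\quad\text{and}\quad \mathcal{G}_k\subseteq B_{Ck}
\end{align*}
for some constant $C$, while $\mathrm{gr}^{\mathcal{G}}W_{X_P}$ is a finitely generated commutative graded algebra. The containments should follow from \eqref{eq:realization}, which says $\partial_i$ has order $d_P$ and weight $-1$. Given this, any good filtration $\Gamma$ of $M$ is comparable, up to linear rescaling of indices, with a good $\mathcal{G}$-filtration $\Gamma'$. Then $\mathrm{gr}^{\Gamma'}M$ is a finitely generated graded module over a finitely generated commutative graded algebra, so $\dim\Gamma'_k$ is a quasi-polynomial of integer degree $n$. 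This gives $\dim\Gamma_k\le c_1k^n$ with $c_1$ depending on $M$ and $\Gamma$ only through these comparisons, and shows that $\mathrm{GK}(M)=n$ is an integer. Independence of $n$ from the choice of $\Gamma$ follows from Proposition \ref{prop:easy}.

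\emph{Main obstacle.} The hard step is building $\mathcal{G}$ with a commutative, finitely generated associated graded algebra that is comparable to $\mathrm{Ber}$. Here $\mathrm{gr}^{\mathrm{Ber}}$ is noncommutative when $d_P>1$, so the weights must be chosen so that commutators $[\partial_i,x_j]$, which have order at most $d_P-1$, drop strictly in filtration degree. I would first try the weight $\deg x_i=d_P$, $\deg\partial_i=d_P$ on the bigraded pieces, measured by order plus $E$-weight, and check the commutator drop via Lemma \ref{lem:orderadd}. For the uniform lower constant $c$, I would reduce to the case where $M$ is GK-critical, using a finite filtration of $M$ by Noetherianity (Theorem \ref{thm:properties}), and then rerun the proof of Lemma \ref{lem:prebernstein}. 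That argument annihilates $\Gamma_{kc}$ using only uniform shifts, which should yield the same constant for every $M$.
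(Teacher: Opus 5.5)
There is a genuine gap, in two places.

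\textbf{The lower bound.} Rerunning the proof of Lemma \ref{lem:prebernstein} on a GK-critical subquotient cannot give the exponent $n=\mathrm{GK}(M)$. That argument only yields the injection $B_k\hookrightarrow\mathrm{Hom}_K(\Gamma_{kc},\Gamma_{kc+k})$, hence $\dim B_k\le(\dim\Gamma_{k(c+1)})^2$. Since $\dim B_k$ grows like $k^{2\dim X_P}$ (Corollary \ref{cor:polygrowth}, Theorem \ref{thm:Bernstein}), the best it can ever produce is $\dim\Gamma_k\ge ck^{\dim X_P}$. This holds for every module, critical or not, and it is strictly weaker than the claim whenever $n>\dim X_P$. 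The exponent $n$ has to come from something that sees $M$ itself.

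In the paper this is localization. Lemma \ref{lem:localization} gives $M\hookrightarrow\bigoplus_i S_i^{-1}M\oplus S_i'^{-1}M^w$, and each summand is a module over $\mathcal D$ of a smooth affine chart. This gives the upper bound with $n$ the maximal GK-dimension of the summands. For the lower bound, one picks a summand realizing $n$ and uses the smooth theory on that chart. The bound is then transported back through $F_i^k\subseteq x_i^{-(r+q-1)k}V^{qk}$ and $V^q\subseteq B_1^s$.

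\textbf{The upper bound.} Your argument rests on the step you yourself call the main obstacle, and it is not carried out. Equation \eqref{eq:realization} only gives one containment, $B_k\subseteq\mathcal G_{2d_Pk}$. Note that $\partial_i$ has $\mathcal G$-degree $d_P+d_P=2d_P$, not $d_P$ as in your tentative weights. A signed weight $a+d_P\lambda$ would put $\partial_i$ in degree $0$ and make the pieces infinite-dimensional.

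The reverse containment $\mathcal G_k\subseteq B_{Ck}$ and the finite generation of $\mathrm{gr}^{\mathcal G}$ are the real content, and your sketch addresses neither. Both can be supplied for the unsymmetrized $\mathcal G_m=\bigoplus_\lambda F^{\mathrm{ord}}_{m-d_P|\lambda|}W_{X_P,\lambda}$; the symmetrization is unnecessary.
\begin{enumerate}
\item Commutativity follows from $[F^{\mathrm{ord}}_aW_{X_P,\lambda},F^{\mathrm{ord}}_bW_{X_P,\mu}]\subseteq F^{\mathrm{ord}}_{a+b-1}W_{X_P,\lambda+\mu}$.
\item The pieces are finite-dimensional because each bihomogeneous piece of $A:=K[T^\ast X_P^\circ]$ is induced from a finite-dimensional $P$-module.
\item In $\mathrm{gr}^{\mathcal G}$, a product of symbols of positive and negative weight vanishes; all other products are as in $A$. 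Hence $\mathrm{gr}^{\mathcal G}$ is generated by generators of $A_{\ge0}$ and $A_{\le0}$. These are finitely generated because $A$ is, by Theorem \ref{thm:filtration} and \cite{FuLiu}.
\item For $\mathcal G_k\subseteq B_{Ck}$, lift finitely many bihomogeneous generators of $A$ into $W_{X_P}$ using Theorem \ref{thm:Weyl=Diff}, and induct on order. Order-$0$ generators have weight $\ge1$ and the others have weight $\ge -C\cdot\mathrm{order}$. So a monomial of bidegree $(a,\lambda)$ has at most $|\lambda|+(C+1)a$ factors.
\item A uniform $c$ then follows because the leading coefficient of $\dim\mathcal G_kM_0$ lies in $\frac{1}{n!\prod_j\delta_j}\zz_{>0}$, where the $\delta_j$ are the degrees of fixed generators of $\mathrm{gr}^{\mathcal G}$.
\end{enumerate}

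Completed this way, your route proves more than the lemma. It gives a finite-dimensional filtration with $\mathcal G_0=K$ and commutative affine associated graded, which yields Proposition \ref{prop:partitive} directly and bears on the Remark after it. It does, however, depend on Theorem \ref{thm:filtration} and \cite{FuLiu}, whereas the paper's proof needs only $D$-module theory on the smooth charts $\{x_i\neq0\}$ and their $P^{\mathrm{op}}$ analogues.
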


\begin{proof}
 Let $S_i:=\{ x_i^j: j\in \zz_{\ge 0}\}$ and  $U_i:=V(x_i)^c$. Let $F_i\subseteq S^{-1}_i W_{X_P}=\mathcal{D}(U_i)$ be a finite dimensional vector space spanned by a finite set of generators so that $\mathcal{F}_i=(F_i^k)_{k\ge 0}$ is a good standard filtration on $\mathcal{D}(U_i)$. Since $U_i$ is a smooth affine variety, we can choose $F_i$ such that $\mathrm{gr}^{\mathcal{F}_i}(\mathcal{D}(U_i))$ is a finitely generated commutative $K$-algebra (of dimension $2\dim X_P$).
 
 Let $S_i':=\{ (x_i^{\mathrm{op}})^j: j\in \zz_{\ge 0}\}.$ Choose analogously good standard filtrations $\mathcal{F}_i'=(F_i'^k)_{k\ge 0}$ on $S'^{-1}_i W_{X_{P^\mathrm{op}}}.$ By Lemma \ref{lem:localization}, we have an injection of $W_{X_P}$-modules
    \begin{align*}
        M\hookrightarrow \bigoplus_i S_i^{-1}M\oplus S_i'^{-1}M^w.
    \end{align*}
Choose $t>0$ such that
    \begin{align*}
        B_1\subseteq F_{i}^t, \quad \mathcal{F}_{P|P^{\mathrm{op}}}(B_1)\subseteq F_{i}'^t.
    \end{align*}
    Let $M_0$ be a vector subspace of $M$ spanned by a finite set of generators of $M$. Then we have
    \begin{align*}
        \dim B_1^kM_0\le \sum_{i} \dim F_{i}^{tk}M_0+\dim F_{i}^{'tk}M_0^w.
    \end{align*}
     Let $n:=\max_i (\mathrm{GK}(S_i^{-1}M), \mathrm{GK}(S_i'^{-1}M^w))$. Then $\dim B_1^kM_0\ll k^n$ by the theory of $D$-modules on smooth affine varieties.   This proves the first assertion upon observing that $c_1$ and $n$ do not depend on good filtrations on $M_0$.

     To prove the second assertion, we may assume $\mathrm{GK}(S_i^{-1}M)=n$ for some $i$. By definition, $x_i^rF_i\subset W_{X_P}$ for some $r\ge  0$. Choose a finite-dimensional vector subspace $V$ of $W_{X_P}$ containing $B_1$ such that $x_i^rF_i\subseteq V$ and $V$ is stable under the operator $\delta(\cdot):=[x_i,\cdot].$ Note that there is $q\in \zz_{>0}$ such that $\delta^q(V)=0.$ Arguing as in \cite[Lemma 15.4.2]{noncomm}, for any $k\in \zz_{\ge 0}$ we have
    \begin{align*}
        F_i^{k}\subseteq x_i^{-(r+q-1)k}V^{qk}.
    \end{align*} 
    Hence $\dim F_i^kM_0\le \dim V^{qk}M_0.$ Since $(F_i^kM_0)_{k\ge 0}$ is a standard filtration on $S^{-1}_iM$ with respect to $\mathcal{F}$, there is an absolute constant $c>0$ such that $ck^{n}\le\dim F_i^kM_0$ for all large $k$. Choose $s>0$ such that $V^{q}\subseteq B_1^{s}$. Then for $k$ large
    \begin{align*}
        ck^{n}\le \dim V^{qk}M_0\le \dim B_1^{sk}M_0.
    \end{align*}
    Since $s$ is independent of $M$, this completes the proof.
\end{proof}

Let $M$ be a nonzero finitely generated left $W_{X_P}$-module, so $\mathrm{GK}(M)=n\ge \dim X_P$ is a positive integer. Thanks to Lemma \ref{lem:multiplicity}, we can define the (unnormalized) multiplicity of $M$ to be
\begin{align*}
    e(M):=\liminf_{k\to \infty} \frac{\dim \Gamma_k }{k^n}\in \rr_{>0}.
\end{align*}
Note that $e(M)$ is uniformly bounded below. Also if $0\to M'\to M\to M''\to 0$ is an exact sequence. Then
\begin{align*}
    e(M)\ge e(M')+e(M'').
\end{align*}

Recall that a finitely generated $K$-algebra $R$ is left (resp. right) finitely partitive if for any finitely generated left (resp. right) $R$-module $M$, $\mathrm{GK}(M)$ is an integer, and there is an integer $n>0$ such that for any descending chain of $R$-submodule
\begin{align*}
    M=M_0\supset M_1\supset \ldots \supset M_m\supset M_{m+1}=0
\end{align*}
such that $\mathrm{GK}(M_i/M_{i+1})=\mathrm{GK}(M)$ for all $i$, one has $m\le n$. By the discussion above and Lemma \ref{lem:multiplicity}, one has

\begin{Prop}\label{prop:partitive}
    The Weyl algebra $W_{X_P}$ is left and right finitely partitive.\qed
\end{Prop}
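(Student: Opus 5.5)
We deduce left finite partitivity from Lemma \ref{lem:multiplicity} together with the superadditivity of the multiplicity $e(\cdot)$, and then get the right-hand version by passing to the opposite algebra.

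\emph{Integrality.} Let $M$ be a nonzero finitely generated left $W_{X_P}$-module. By Lemma \ref{lem:multiplicity}, $\mathrm{GK}(M)=n$ is an integer. Moreover, $n\le 2\dim X_P$ by Proposition \ref{prop:easy}(4) and Theorem \ref{thm:Bernstein}.

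\emph{The chain bound.} Fix a good standard filtration $\Gamma$ on $M$ with respect to the Bernstein filtration. Lemma \ref{lem:multiplicity} gives $\dim\Gamma_k\le c_1k^n$ for $k$ large, so $e(M)\le c_1<\infty$. Now take a chain $M=M_0\supset M_1\supset\cdots\supset M_{m+1}=0$ with $\mathrm{GK}(M_i/M_{i+1})=n$ for all $i$. Since $W_{X_P}$ is Noetherian by Theorem \ref{thm:properties}(1), every $M_i$ and every quotient $M_i/M_{i+1}$ is finitely generated. Iterating the superadditivity $e(M)\ge e(M')+e(M'')$ along the exact sequences $0\to M_{i+1}\to M_i\to M_i/M_{i+1}\to 0$ gives
\[
e(M)\ge \sum_{i=0}^{m} e(M_i/M_{i+1}).
\]
Each quotient has GK-dimension exactly $n$, so the absolute constant $c$ of Lemma \ref{lem:multiplicity} gives $e(M_i/M_{i+1})\ge c$. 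Hence $(m+1)c\le e(M)\le c_1$, and the required integer bound is $\lfloor c_1/c\rfloor$.

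\emph{Justifying superadditivity.} This is the main obstacle. The plan is as follows.
\begin{itemize}
\item Take the induced filtration $\Gamma_k\cap M'$ on $M'$ and the image filtration on $M''$; then $\dim\Gamma_k=\dim(\Gamma_k\cap M')+\dim(\text{image})$.
\item The image filtration is good standard, so it satisfies the lower bound of Lemma \ref{lem:multiplicity}.
\item The induced filtration on $M'$ need not be standard. It is still good, and it dominates a standard filtration generated by finitely many elements, up to a bounded shift of index. This holds because $M'$ is finitely generated, so its generators lie in some $\Gamma_{k_0}$.
\item The lower bound $ck^n\le\dim$ therefore transfers to the induced filtration, since a shift of index does not change the leading coefficient in the $\liminf$.
\item Because $e$ is defined as a $\liminf$, the sum inequality holds at least at the level of the uniform lower bound $c$. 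That is all the argument needs: $\liminf(a_k+b_k)\ge\liminf a_k+\liminf b_k$.
\end{itemize}

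\emph{Right modules.} The map $x_i\mapsto x_i$, $\partial_i\mapsto$ its formal adjoint gives an anti-isomorphism of $W_{X_P}$ onto itself. Concretely, transposition of differential operators on $X_P^\circ$ gives an anti-isomorphism $\mathcal{D}(X_P)\cong\mathcal{D}(X_P)^{\mathrm{op}}$ after twisting by the trivial canonical bundle: $X_P$ is Gorenstein and factorial by Lemma \ref{lem:XP}, so $\omega_{X_P^\circ}$ is trivial. Combined with Theorem \ref{thm:Weyl=Diff}, this lets right modules be treated as left modules, and the same argument applies.
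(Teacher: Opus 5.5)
This is correct and is the paper's argument. Lemma \ref{lem:multiplicity} supplies integrality of $\mathrm{GK}$, the upper bound $c_1k^n$ and the uniform lower bound $ck^n$. Superadditivity of $e$ along the chain, where every $M_i$ has GK-dimension $n$, then bounds the chain length by $c_1/c$.

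Your justification of superadditivity fills in a step the paper leaves implicit: the induced filtration on the submodule dominates a shifted standard filtration, so the $\liminf$ is unchanged. The same goes for your reduction of the right-module case to the left via the transpose anti-isomorphism $W_{X_P}\cong W_{X_P}^{\mathrm{op}}$, which uses the trivial canonical bundle on $X_P^\circ$ and Theorem \ref{thm:Weyl=Diff}.
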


\begin{Rem}
    Proposition \ref{prop:partitive} would follow immediately if there exists a good standard filtration on $W_{X_P}$ whose associated graded algebra is a commutative $K$-algebra of finite type. In addition, it would be interesting to know if this  filtration can be chosen so that the associated graded algebra is isomorphic to $K[T^\ast X_P^\circ].$  A problem relevant to the existence of such filtrations is the existence of an integer $r\ge 1$ such that for any $D_i\in B_1$
    \begin{align*}
            [[\ldots[[D_1,D_2],D_3],\ldots], D_{r+1}]\in B_r.
    \end{align*}
    It follows from \eqref{eq:realization} that $r\ge d_P$. We suspect that $r$ exists and one can take $r=d_P$. For $d_P=1,$ this follows from the product rule. For $d_P= 2,$ this can be easily verified (see Example \ref{ex} below). In this case one can take a good standard filtration $\mathcal{F}$ with
    \begin{align*}
        F_1=B_1+\mathfrak{g}+\mathfrak{m}_P^{\mathrm{ab}}.
    \end{align*}
    Over $\cc$, the resulting associated graded algebra is indeed isomorphic to $\cc[T^\ast X_P^\circ]$. 
\end{Rem}

Finally, as in \cite{BBP:Gluingbasicaffine} we describe $\mathcal{M}(W_{X_P})$ as a glued category $\mathcal{C}$ (denoted by $\mathcal{B}_\Phi$ in ibid.) An object in $\mathcal{C}$ is a pair $(\mathcal{E},\mathcal{E}_w)\in \mathcal{M}(\mathcal{D}_{X_P^\circ})\times \mathcal{M}(\mathcal{D}_{X_{P^{\mathrm{op}}}^\circ})$ together with two morphisms
\begin{align*}
    h&:\mathcal{E}\longrightarrow \mathcal{L}\circ \Gamma_w(\mathcal{E}_w),\quad\quad h_w:\mathcal{E}_w\to \mathcal{L}_w\circ\Gamma(\mathcal{E})
\end{align*}
such that the following two diagram commutes
\[
\begin{minipage}{0.45\textwidth}
\centering
\footnotesize
    \begin{tikzcd}
        \mathcal{E}\ar[r,equal]\ar[rd,"h",swap]& \mathcal{L}\circ \Gamma(\mathcal{E}) \ar[r]& \mathcal{L}\circ \Gamma_w\circ \mathcal{L}_w\circ \Gamma(\mathcal{E})\\
        & \mathcal{L}\circ \Gamma_w(\mathcal{E}_w)\ar[ru,swap,"(\mathcal{L}\circ \Gamma_w)(h_w)"]
    \end{tikzcd}
\end{minipage}
\hfill
\begin{minipage}{0.45\textwidth}
\centering
\footnotesize
    \begin{tikzcd}
        \mathcal{E}_w\ar[r,equal]\ar[rd,"h_w",swap]& \mathcal{L}_w\circ \Gamma_w(\mathcal{E}) \ar[r]& \mathcal{L}_w\circ \Gamma\circ \mathcal{L}\circ \Gamma_w(\mathcal{E})\\
        & \mathcal{L}_w\circ \Gamma(\mathcal{E})\ar[ru,swap,"(\mathcal{L}_w\circ \Gamma)(h)"]
    \end{tikzcd}
\end{minipage}
\]
Here the horizontal maps are the maps induced by adjunction morphisms. A morphism in $\mathcal{C}$ is a pair of morphism in $\mathcal{M}(\mathcal{D}_{X_P^\circ})\times \mathcal{M}(\mathcal{D}_{X_{P^{\mathrm{op}}}^\circ})$ that is compatible with morphisms $(h,h_w).$ In words, $\mathcal{C}$ consist of pairs of sheaves whose global sections have compatible localizations with respect to $w.$

\begin{Thm}\label{thm:equivalence}
    We have a natural equivalence of categories
    \begin{align*}
        \mathcal{M}(W_{X_P})&\longrightarrow\mathcal{C}\\
        M&\longmapsto (\mathcal{L}(M),\mathcal{L}_w(M)).
    \end{align*}
\end{Thm}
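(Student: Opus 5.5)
We follow the strategy of \cite{BBP:Gluingbasicaffine}: build a quasi-inverse functor from $\mathcal{C}$ to $\mathcal{M}(W_{X_P})$ using a fiber product of global sections, and show both composites are the identity. We first check that the functor $\Psi: M\mapsto (\mathcal{L}(M),\mathcal{L}_w(M))$ actually lands in $\mathcal{C}$. The map $h$ is $\mathcal{L}$ applied to the adjunction unit $M\to \Gamma_w\mathcal{L}_w(M)$; note $\mathcal{L}\circ\Gamma_w\circ\mathcal{L}_w(M)$ is the target. Likewise $h_w$ is $\mathcal{L}_w$ applied to $M\to\Gamma\mathcal{L}(M)$. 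The two triangles then commute by naturality of the units together with $\mathcal{L}\circ\Gamma=\mathrm{Id}$ (and its analogue for $w$). Functoriality is clear.

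For the quasi-inverse $\Phi:\mathcal{C}\to\mathcal{M}(W_{X_P})$, given $(\mathcal{E},\mathcal{E}_w,h,h_w)$ we set
\begin{align*}
\Phi(\mathcal{E},\mathcal{E}_w):=\Gamma(\mathcal{E})\times_{\Gamma\mathcal{L}\Gamma_w(\mathcal{E}_w)}\Gamma_w(\mathcal{E}_w).
\end{align*}
Here the first map is $\Gamma(h)$, and the second is the unit $\Gamma_w(\mathcal{E}_w)\to\Gamma\mathcal{L}\Gamma_w(\mathcal{E}_w)$. This fiber product is a $W_{X_P}$-module because all maps are $W_{X_P}$-linear by Theorem \ref{thm:Weyl=Diff}.

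Next we show $\Phi\circ\Psi\cong\mathrm{Id}$. There is a natural map $M\to \Gamma\mathcal{L}(M)\times_{\Gamma\mathcal{L}\Gamma_w\mathcal{L}_w(M)}\Gamma_w\mathcal{L}_w(M)$. Its kernel consists of elements killed by both localizations, so it vanishes by Lemma \ref{lem:localization}, applied to the submodule generated by the kernel. For surjectivity, take a compatible pair $(s,s')$. Since $s\in\Gamma\mathcal{L}(M)$, there is $r$ with $b(\mathbf{x})s$ in the image of $M$ for all $b\in\mathcal{B}_r$. Similarly, $s'\in\Gamma_w\mathcal{L}_w(M)$ gives $b^{\mathrm{op}}(\partial)s'$ in the image of $M$. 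The compatibility condition lets us compare these inside the fiber product, and Corollary \ref{cor:critical} (case (1)), applied with $M\subseteq M'$ the fiber product, gives that $(s,s')$ lies in $M$. This use of Corollary \ref{cor:critical} replaces the Fourier/$\mathfrak{sl}_2$ arguments of \cite{BBP:Gluingbasicaffine}.

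The reverse direction, $\Psi\circ\Phi\cong\mathrm{Id}$, is the main obstacle. We must show that $\mathcal{L}(\Phi(\mathcal{E},\mathcal{E}_w))\to\mathcal{E}$ is an isomorphism, and symmetrically for $w$. Since $\mathcal{L}$ is exact and $\mathcal{L}\circ\Gamma=\mathrm{Id}$, applying $\mathcal{L}$ to the fiber product gives $\mathcal{E}\times_{\mathcal{L}\Gamma_w\mathcal{E}_w}\mathcal{L}\Gamma_w(\mathcal{E}_w)$, using $\mathcal{L}\Gamma\mathcal{L}=\mathcal{L}$. This reduces to $\mathcal{E}$ because the second map is an isomorphism. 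The delicate points are exactness of $\mathcal{L}$ on fiber products, which holds since $\mathcal{L}$ is an exact localization, and the verification that the composite recovers $h$ and $h_w$. The latter is exactly where the two commuting triangles in the definition of $\mathcal{C}$ are used. Finally, full faithfulness on morphisms follows formally from these two natural isomorphisms.
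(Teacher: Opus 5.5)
There is a genuine gap in your definition of $\Phi$. Your fiber product imposes only the compatibility with $h$ and never uses $h_w$. It is therefore too large, and because the construction is not symmetric in $(\mathcal{E},\mathcal{E}_w)$, the ``symmetrically for $w$'' step in your last paragraph is not available.

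Write $u\colon N\to\Gamma\mathcal{L}N$ and $u_w\colon N\to\Gamma_w\mathcal{L}_wN$ for the adjunction units. The failure appears in your surjectivity step for $M\to\Phi\Psi(M)$.

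On the $\mathbf{x}$-side things are fine. If $b(\mathbf{x})s=u(m_b)$, the fiber-product condition forces $b(\mathbf{x})s'-u_w(m_b)\in\ker\bigl(\Gamma_w\mathcal{L}_wM\to\Gamma\mathcal{L}\Gamma_w\mathcal{L}_wM\bigr)$. That kernel is $\mathcal{L}$-torsion, so enlarging $r$ puts $b(\mathbf{x})(s,s')$ in the image of $M$.

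On the $\partial$-side the argument breaks. Writing $b^{\mathrm{op}}(\partial)s'=u_w(m_b')$ gives $b^{\mathrm{op}}(\partial)(s,s')-(u(m_b'),u_w(m_b'))=(e_b,0)$. Your condition only says $e_b\in\ker\Gamma(h)=\Gamma\mathcal{L}\bigl(\ker(M\to\Gamma_w\mathcal{L}_wM)\bigr)$. Nothing forces $e_b$ to be killed by high-degree $\partial$-monomials, so Corollary \ref{cor:critical} cannot be applied.

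For instance, if $\mathcal{L}_w(M)=0$ your construction gives $\Phi\Psi(M)=\Gamma\mathcal{L}(M)$. You would then need every $\mathcal{L}_w$-torsion module to be $\mathcal{L}$-local, which is an extra Hartogs/depth-type statement that does not follow from Lemma \ref{lem:localization}.

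The analogous rank-one situation shows this is a real obstruction. Take $K\langle x,\partial\rangle$, $X^\circ=\A^1-\{0\}$, and $w$ the Fourier transform. Every input you use holds there: exactness, $\mathcal{L}\circ\Gamma=\mathrm{Id}$, conservativity, and Corollary \ref{cor:critical}. Yet for $M=K[x]$ your fiber product is $K[x^{\pm1}]$. The element $(x^{-1},0)$ lies in it and $x\cdot(x^{-1},0)\in M$, but no $\partial^r(x^{-1},0)$ does. The same example gives $\Phi(\mathcal{O},0)=K[x^{\pm1}]$, whose $\mathcal{L}_w$ is nonzero, so $\mathcal{L}_w\Phi(\mathcal{E},\mathcal{E}_w)\neq\mathcal{E}_w$ in general.

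The repair is to define $\Phi(\mathcal{E},\mathcal{E}_w)$ as the equalizer that imposes both gluing maps:
\[
\bigl\{(a,b)\in\Gamma(\mathcal{E})\oplus\Gamma_w(\mathcal{E}_w):\ \Gamma(h)(a)=u(b),\ \Gamma_w(h_w)(b)=u_w(a)\bigr\}.
\]
With the second condition, the discrepancy $e_b$ also lies in $\ker\bigl(\Gamma\mathcal{L}M\to\Gamma_w\mathcal{L}_w\Gamma\mathcal{L}M\bigr)$. It is then $\mathcal{L}_w$-torsion, and after enlarging $r$ your Corollary \ref{cor:critical} argument goes through.

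More conceptually, this equalizer is comonadic (Barr--Beck) descent along the exact functor $(\mathcal{L},\mathcal{L}_w)$, whose right adjoint is $\Gamma\oplus\Gamma_w$. Applying $(\mathcal{L},\mathcal{L}_w)$ turns it into a split equalizer, so both composites are isomorphisms as soon as $(\mathcal{L},\mathcal{L}_w)$ is conservative. This is exactly \cite[Corollary 2.7]{BBP:Gluingbasicaffine}. The paper's proof is just that corollary, with Lemma \ref{lem:localization} supplying conservativity.
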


\begin{proof}
As explained in \cite[Theorem 3.5]{BBP:Gluingbasicaffine}, this follows from Lemma \ref{lem:localization} and \cite[Corollary 2.7]{BBP:Gluingbasicaffine}.
\end{proof}

\subsection{Holonomic modules and the Bernstein-Sato polynomial}

\begin{Def}
A finitely generated left $W_{X_P}$-module $M$ is \textbf{holonomic} if it is zero or $\mathrm{GK}(M)=\dim X_P$.
\end{Def}

\begin{Prop}\label{prop:basicholonomic}
A holonomic $W_{X_P}$-module is Noetherian, Artinian, torsion, and cyclic.  The category of holonomic modules on $X_P$ is abelian, Noetherian and Artinian.
\end{Prop}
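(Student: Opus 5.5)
We follow the classical argument for the Weyl algebra (e.g.\ \cite[Chapter 9]{C:Dmod}), with Theorem \ref{thm:Bernstein}, Lemma \ref{lem:multiplicity} and Theorem \ref{thm:simple} replacing the commutative associated graded algebra. Throughout, $M$ denotes a holonomic module, so $\mathrm{GK}(M)=\dim X_P$ by definition. By Theorem \ref{thm:Bernstein} every nonzero submodule or quotient of $M$ has GK-dimension at least $\dim X_P$. By Proposition \ref{prop:easy}(3) it also has GK-dimension at most $\mathrm{GK}(M)$. Hence the class of holonomic modules is closed under submodules, quotients and (using Proposition \ref{prop:easy}(3) together with the bound on both sides) extensions. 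We use the multiplicity $e(\cdot)$ defined after Lemma \ref{lem:multiplicity}: its superadditivity $e(M)\ge e(M')+e(M'')$ and the uniform lower bound $e(N)\ge c$ for nonzero holonomic $N$, with $c$ from Lemma \ref{lem:multiplicity}.

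\emph{Artinian and Noetherian.} Noetherianity is immediate from Theorem \ref{thm:properties}(1), since $M$ is finitely generated. For Artinianity, take a strictly descending chain $M=M_0\supsetneq M_1\supsetneq\cdots\supsetneq M_m$. Every nonzero subquotient $M_i/M_{i+1}$ is holonomic, so superadditivity and the lower bound give $e(M)\ge\sum_i e(M_i/M_{i+1})\ge mc$. Thus $m\le e(M)/c$, so $M$ has finite length; equivalently, this is Proposition \ref{prop:partitive}. It follows that the full subcategory of holonomic modules is abelian, because it is closed under kernels, cokernels and extensions, and that it is Noetherian and Artinian.

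\emph{Torsion.} For $0\neq m\in M$ we need $0\neq D\in W_{X_P}$ with $Dm=0$. Otherwise $W_{X_P}\to W_{X_P}m$ is an isomorphism, and then $\mathrm{GK}(W_{X_P}m)=2\dim X_P>\dim X_P$, contradicting $W_{X_P}m\subseteq M$.

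\emph{Cyclic.} This is Stafford's theorem adapted as in \cite[Theorem 1.8.?]{C:Dmod}. We first show that every finite-length module over the simple, infinite-dimensional Noetherian domain $W_{X_P}$ which is torsion is cyclic. Induct on length, writing $M$ as an extension $0\to N\to M\to S\to 0$ with $N$ cyclic and $S$ simple. Pick generators $n$ of $N$ and $m'$ lifting a generator of $S$, and look for $D$ with $m'+Dn$ generating $M$. The standard lemma to prove is: if $\mathrm{Ann}(\bar m')\neq 0$ and $N$ has finite length, then some $D$ works. Its proof uses simplicity (Theorem \ref{thm:simple}) to show $W\,\mathrm{Ann}(\bar m')\,W=W$. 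Combined with the finite length of $N$ and the descending-chain argument over $N\cap W(m'+Dn)$, this shows that not all $D$ can fail. The main obstacle is exactly this step. The classical proof for $A_n$ uses only simplicity, the Noetherian property, infinite dimensionality and finite length, so we expect it to transfer verbatim. We would reproduce the argument of \cite[Chapter 9]{C:Dmod} (or \cite[Theorem 3.3]{Stafford}), checking at each point that only these four properties of $W_{X_P}$ are used.
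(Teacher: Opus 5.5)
Your outline is the same standard argument the paper invokes: Proposition \ref{prop:easy}(3) with Theorem \ref{thm:Bernstein} for subquotients, Theorem \ref{thm:properties}(1) for Noetherianity, Lemma \ref{lem:multiplicity} and Proposition \ref{prop:partitive} for finite length, and $\mathrm{GK}(W_{X_P})=2\dim X_P>\dim X_P$ for torsion. Those steps are fine, but two points need repair.

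First, closure under extensions does not follow from Proposition \ref{prop:easy}(3). For a non-split sequence it only gives $\mathrm{GK}(M)\ge\max(\mathrm{GK}(M'),\mathrm{GK}(M''))$, and ``the bound on both sides'' supplies no upper bound for $M$. Since $\mathrm{gr}^{\mathrm{Ber}}W_{X_P}$ is in general noncommutative, the classical exactness argument is not available. You do not need this claim: a full subcategory closed under submodules, quotients and finite direct sums (the split case of Proposition \ref{prop:easy}(3)) is already abelian. If you want extension-closure anyway, deduce it from Lemma \ref{lem:multiplicity}. That lemma computes $\mathrm{GK}(M)$ as a maximum over localizations to smooth affine opens, where GK-dimension is exact.

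Second, and more seriously, cyclicity is left as a promissory note, and the lemma you sketch is set up awkwardly. Splitting off a simple \emph{quotient} with $N$ cyclic forces you to find a single $D$ with $N\subseteq W_{X_P}(m'+Dn)$ for a non-simple $N$. Also, $\mathrm{Ann}(\bar m')$ is the wrong annihilator: for $j\in\mathrm{Ann}(\bar m')$ the term $jm'\in N$ is uncontrolled.

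Split off a simple \emph{submodule} instead. Pick a simple submodule $W_{X_P}s\subseteq M$. By induction $M/W_{X_P}s$ is cyclic, so $M=W_{X_P}m+W_{X_P}s$. The left ideal $J:=\mathrm{Ann}(m)$ is nonzero by your torsion step. Then $JW_{X_P}$ is a nonzero two-sided ideal, so Theorem \ref{thm:simple} gives $1=\sum_k j_kr_k$ with $j_k\in J$. Hence $s=\sum_k j_kr_ks$, and $Jr_ks\neq 0$ for some $k$. Since $Jr_ks$ is a submodule of the simple module $W_{X_P}s$, it equals $W_{X_P}s$. Moreover $Jr_ks=J(m+r_ks)\subseteq W_{X_P}(m+r_ks)$. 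Therefore $s$, and then $m$, lie in $W_{X_P}(m+r_ks)$, so $M=W_{X_P}(m+r_ks)$. This uses only simplicity and the nonvanishing of annihilators; no descending-chain argument is needed.
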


\begin{proof}
   By a standard argument, this follows from Proposition \ref{prop:easy}(3), Theorem \ref{thm:properties}(1) and Proposition \ref{prop:partitive}.
\end{proof}

The following lemma follows from a minor modification of the argument in \cite[Lemma 10.3.1]{C:Dmod} and Lemma  \ref{lem:multiplicity}.
\begin{Lem}\label{lem:growth}
Let $M$ be a left $W_{X_P}$-module and $\Gamma$ be a filtration on $M$ with respect to the Bernstein filtration. Suppose there is $c>0$ such that
\begin{align*}
    \dim \Gamma_k\le ck^{\dim X_P}
\end{align*}
for all $k$ large. Then $M$ is holonomic. \qed
\end{Lem}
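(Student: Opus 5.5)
We follow the classical argument of \cite[Lemma 10.3.1]{C:Dmod}: show that every finitely generated submodule of $M$ is holonomic with multiplicity bounded by a constant depending only on $c$, and then deduce that $M$ is finitely generated. The input replacing the commutative Hilbert polynomial estimates is Lemma \ref{lem:multiplicity}, together with the uniform lower bound of Theorem \ref{thm:Bernstein}.

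\textbf{Step 1: finitely generated submodules are holonomic with bounded multiplicity.} Let $N\subseteq M$ be generated by a finite-dimensional subspace $N_0$. Since $\Gamma$ is exhaustive, $N_0\subseteq \Gamma_{k_1}$ for some $k_1$. The standard filtration $\Gamma'_k:=B_kN_0$ on $N$ then satisfies
\begin{align*}
\Gamma'_k\subseteq B_k\Gamma_{k_1}\subseteq \Gamma_{k+k_1}.
\end{align*}
Hence $\dim \Gamma'_k\le c(k+k_1)^{\dim X_P}\le 2c\,k^{\dim X_P}$ for $k$ large. By Lemma \ref{lem:multiplicity}, $\mathrm{GK}(N)$ is an integer $n$ with $\dim\Gamma'_k\ge c'k^n$ for large $k$. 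This forces $n\le\dim X_P$, and Theorem \ref{thm:Bernstein} gives the reverse inequality, so $N$ is holonomic. Moreover $e(N)\le c$, since the shift by $k_1$ does not change the leading coefficient.

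\textbf{Step 2: bounding the length of a chain.} Suppose $N^{(1)}\subsetneq N^{(2)}\subsetneq\cdots\subsetneq N^{(m)}$ is a chain of finitely generated submodules of $M$. Each quotient $N^{(j+1)}/N^{(j)}$ is nonzero and holonomic: it is a quotient of a holonomic module, so its GK-dimension is at most $\dim X_P$ by Proposition \ref{prop:easy}(3), and at least $\dim X_P$ by Bernstein's inequality. The second part of Lemma \ref{lem:multiplicity} gives a constant $c_0>0$, independent of the module, with $e(\cdot)\ge c_0$ for every nonzero module of GK-dimension $\dim X_P$. Superadditivity of $e$ on short exact sequences, applied inductively, gives
\begin{align*}
c\ge e(N^{(m)})\ge \sum_{j} e\bigl(N^{(j+1)}/N^{(j)}\bigr)\ge (m-1)c_0.
\end{align*}
So $m\le c/c_0+1$.

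\textbf{Step 3: conclusion.} By Step 2, there is a maximal finitely generated submodule $N\subseteq M$. For any $x\in M$, the submodule $N+W_{X_P}x$ is finitely generated and contains $N$, so it equals $N$ and $x\in N$. Therefore $M=N$ is finitely generated and, by Step 1, holonomic.

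\textbf{Main obstacle.} The delicate point is Step 2, specifically the applicability of the uniform lower bound on $e$. Two things must be checked. First, the constant $c_0$ in Lemma \ref{lem:multiplicity} must genuinely be independent of the module, for modules of GK-dimension exactly $\dim X_P$; the lemma asserts such an absolute constant, and we will use it as stated. Second, the superadditivity $e(M)\ge e(M')+e(M'')$ requires compatible good filtrations. We will use the induced filtration on $N^{(j)}$ and the quotient filtration on $N^{(j+1)}/N^{(j)}$, both coming from a standard filtration of $N^{(m)}$. Neither of these is standard, but both are good. The upper-bound part of Lemma \ref{lem:multiplicity} holds for any good filtration, and the lower bound defines $e$ via $\liminf$. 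So we expect the inequality to go through after replacing each induced filtration by a comparable standard one, using that good filtrations bound each other up to shifts.
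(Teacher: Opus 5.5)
Your proposal is correct and is the adaptation of \cite[Lemma 10.3.1]{C:Dmod} that the paper invokes. Lemma \ref{lem:multiplicity} supplies integrality of the GK-dimension and a module-independent lower bound on $e$; this replaces integrality of the classical multiplicity, bounds the length of chains of finitely generated submodules, and so forces $M$ to be finitely generated.

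In Step 2 you need less than you worry about. Superadditivity of $e$ only uses the trivial comparison: the induced filtration on a finitely generated submodule contains a shift of a standard filtration of it, and the quotient filtration is standard. So you need not appeal to good filtrations bounding each other up to shifts, which in fact fails for the paper's weak notion of a good filtration.
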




Let $\varphi\in K[X_P]$ be nonzero. Let $s$ be a new variable. Note that the action of $W_{X_P}(K(s))$ on $K(s)[X_P]$ naturally extends to an action on $K(s)[X_P][\varphi^{-1}]$.  Let $\varphi^s$ be a formal symbol. Let $K(s)[X_P][\varphi^{-1}]\varphi^s$ be the space whose underlying space is $K(s)[X_P][\varphi^{-1}]$. It is a left $K(s)[X_P]$-module by the usual function multiplication. Its module structure can be upgraded to a left $W_{X_P}(K(s))$-module by the usual rules of differentiation as follows.

 Since $V(x_i)^c$ is smooth, we can define a left $(W_{X_P}(K(s)))_{x_i}$-module structure on
\begin{align*}
    K(s)[X_P][\varphi^{-1},x_i^{-1}]\varphi^s
\end{align*}
by specifying that for $D\in (W_{X_P}(K(s)))_{x_i}$ of order $1$
\begin{align*}
      D\cdot \varphi^s:=s\frac{D(\varphi)}{\varphi} \cdot \varphi^s,
\end{align*}
and extending this action to higher order elements by the product rule. These modules glue to a $\mathcal{D}_{X_P^\circ}$-module $\mathcal{E}$ over $K(s)$. This equip $K(s)[X_P][\varphi^{-1}] \varphi^s$ with a left $W_{X_P}(K(s))$-module structure. By the same reasoning, we have a $W_{X_P}(K)$-linear automorphism $t$ on $K(s)[X_P][\varphi^{-1}]\varphi^s$ given by
\begin{align}\label{eq:t}
    t(s^i\cdot \varphi^s)=(s+1)^i\varphi\cdot \varphi^s
\end{align}
for any $i\in\zz_{\ge 0}.$


\begin{Lem}\label{lem:p-1holonomic}
Let $\varphi\in K[X_P]$ be nonzero. The left $W_{X_P}(K(s))$-module $K(s)[X_P][\varphi^{-1}]\varphi^s$ is holonomic.
\end{Lem}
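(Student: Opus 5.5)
We follow the classical argument of \cite[Lemma 10.3.1]{C:Dmod} (Bernstein's proof that $K(s)[x][\varphi^{-1}]\varphi^s$ is holonomic) and reduce to Lemma \ref{lem:growth}. Work over the field $K(s)$ throughout. Let $M:=K(s)[X_P][\varphi^{-1}]\varphi^s$, let $m:=\deg\varphi$, and let $N\ge d_P$ be an integer to be fixed. We will exhibit a filtration $\Gamma$ of $M$ with respect to the Bernstein filtration $\mathrm{Ber}$ and with $\dim_{K(s)}\Gamma_k\le ck^{\dim X_P}$. Lemma \ref{lem:growth} then gives holonomicity.

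First decompose $\varphi$ into homogeneous components. To keep degrees clean, we instead use the grading of $K[X_P]$ by $\deg$ and the filtration by total degree. Define
\begin{align*}
    \Gamma_k:=\{ f\varphi^{-k}\cdot\varphi^s : f\in K(s)[X_P],\ \deg f\le Nk\}.
\end{align*}
This is an ascending chain, because $f\varphi^{-k}=(f\varphi)\varphi^{-k-1}$ and $\deg(f\varphi)\le Nk+m\le N(k+1)$ once $N\ge m$. It is exhaustive, and
\begin{align*}
    \dim\Gamma_k=\sum_{j\le Nk}\dim V^\lor_{-j\omega_P}=H_{X_P}(Nk)=O(k^{\dim X_P}).
\end{align*}
The remaining point is to check that $B_1\Gamma_k\subseteq\Gamma_{k+1}$. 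For $x_i$ this is immediate.

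\textbf{Main step: $\partial_i\Gamma_k\subseteq\Gamma_{k+1}$.} Here the difficulty is that $\partial_i$ has order $d_P$ rather than $1$. By \eqref{eq:realization}, $\partial_i$ is a differential operator on $X_P^\circ$ of order at most $d_P$. Its coefficients are polynomial, and in the ambient classical Weyl algebra $W_{V_P}$ each term $p_{\underline j}\partial^{\underline j}$ has total degree $-1$: $\deg p_{\underline j}=|\underline j|-1$. The approach is to compute $\partial_i(f\varphi^{-k}\varphi^s)$ through a representative of $\partial_i$ in $W_{V_P}$, namely a lift $\tilde\varphi$, $\tilde f$ of $\varphi$, $f$ to $V_P$, using the ordinary Leibniz and chain rules for $\tilde\varphi^{s-k}$. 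The result agrees with the $W_{X_P}$-action on restriction to $X_P$, since the gluing defining $\mathcal E$ is compatible with restriction from $V_P$ on the open sets $V(x_i)^c$. Applying a term $p\,\partial^{\underline j}$ with $|\underline j|=\ell\le d_P$ to $\tilde f\tilde\varphi^{s-k}$ produces a sum of expressions $g\,\tilde\varphi^{s-k-\ell}$. In each of these, $g$ is a $K(s)$-combination of products of $p$, derivatives of $\tilde f$, and derivatives of $\tilde\varphi$, and $\deg g\le (\ell-1)+Nk+\ell(m-1)$. Multiplying numerator and denominator by $\tilde\varphi^{\,d_P+1-\ell}$ gives denominator exponent $k+d_P+1$.

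This lands in $\Gamma_{k+d_P+1}$ rather than $\Gamma_{k+1}$. We fix it by reindexing: use $\Gamma'_k:=\Gamma_{(d_P+1)k}$. Then $B_1\Gamma'_k\subseteq\Gamma'_{k+1}$ provided $N(d_P+1)\ge d_P m+d_P$, which we arrange by taking $N$ large. The estimate $\dim\Gamma'_k\le H_{X_P}(N(d_P+1)k)=O(k^{\dim X_P})$ is unaffected. We expect the main obstacle to be justifying rigorously that the $W_{X_P}(K(s))$-module structure defined by gluing is computed by this ambient formula, since $\partial_i$ is only given as an element of $\mathcal D(X_P^\circ)$. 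We would handle this by noting that on each $V(x_i)^c$ the action of an element of $\mathcal D(V(x_i)^c)$ on $\varphi^s$ is determined by the product rule. That is exactly what any representative in $W_{V_P}$, restricted to $X_P$, computes, because a representative preserves the ideal of $X_P$ in the relevant sense: its action on $K[X_P]$ is well defined, hence so is its action on $K[X_P][\varphi^{-1}]$ for each fixed integer exponent. Both sides are then polynomial in $s$ and agree at all integers $s\in\zz$, so they agree identically.
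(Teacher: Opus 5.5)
Your proposal is correct and follows essentially the same route as the paper. Both arguments use a filtration by elements $f\varphi^{-ak}\varphi^s$ with $\deg f$ bounded linearly in $k$, check stability under $x_i$ and (via \eqref{eq:realization}, each term having coefficient degree one less than its order) under $\partial_i$, bound $\dim\Gamma_k$ by $H_{X_P}(\text{const}\cdot k)$, and conclude with Lemma \ref{lem:growth}. The differences are minor: the paper builds the factor $d_P$ directly into the denominator ($\varphi^{-d_Pk}$ with $\deg f\le (md_P+1)k$) instead of reindexing, and it takes for granted your specialization-in-$s$ justification that the glued action agrees with the ambient one.
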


\begin{proof}
Let $m=\deg(\varphi)$. For $k\in \zz_{\ge 0},$ set
\begin{align*}
    \Gamma_k:=\{f\varphi^{-d_Pk}\cdot \varphi^{s}: f\in K(s)[X_P] \textrm{ and }\deg f \le (md_P+1)k\}.
\end{align*}
Since $f\varphi^{-d_P k}=f\varphi^{d_P}\varphi^{-d_P(k+1)}$ and 
\begin{align*}
    \deg(f \varphi^{d_P})= \deg f+md_P \le (md_P+1) k +md_P < (md_P+1)(k+1),
\end{align*}
we have $\Gamma_k\subseteq \Gamma_{k+1}$ and $x_i\Gamma_k\subseteq \Gamma_{k+1}$ for all $i$. For differentiation, we have by \eqref{eq:realization} 
\begin{align*}
    \partial_i f\varphi^{-d_Pk}\cdot \varphi^s\in g\varphi^{-d_P(k+1)}\cdot\varphi^s.
\end{align*}
for some $g\in K(s)[X_P]$ such that
\begin{align*}
    \deg(g)\le \deg(f)+md_P -1 < (md_P+1)(k+1).
\end{align*}
Thus $\partial_i\Gamma_k\subseteq \Gamma_{k+1}$. Therefore, $\Gamma=(\Gamma_k)$ is a good filtration on $K(s)[X_P][\varphi^{-1}]$ with respect to the Bernstein filtration. Since for $k$ large
    \begin{align*}
        \dim \Gamma_k \le \sum_{j=0}^{(md_P+1)k}\dim V^\lor_{-j\omega_P}=H_{X_P}((md_P+1)k),
    \end{align*}
the assertion follows from Lemma \ref{lem:growth}.
\end{proof}


\begin{Thm}\label{thm:gcd}
  Let $\varphi\in K[X_P]$ be nonzero. There exists a nonzero $B(s)\in K[s]$ and a differential operator $D(s)\in W_{X_P}[s]$ such that
    \begin{align*}
        B(s)\cdot \varphi^s=D(s)\varphi\cdot \varphi^{s}.
    \end{align*}
\end{Thm}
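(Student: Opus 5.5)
We follow the classical argument of Bernstein. Set $N:=K(s)[X_P][\varphi^{-1}]\varphi^s$, which is a holonomic left $W_{X_P}(K(s))$-module by Lemma \ref{lem:p-1holonomic}. Consider the descending chain of $W_{X_P}(K(s))$-submodules
\begin{align*}
    N\supseteq W_{X_P}(K(s))\varphi\cdot\varphi^s\supseteq W_{X_P}(K(s))\varphi^2\cdot\varphi^s\supseteq\cdots.
\end{align*}
Each inclusion holds because $\varphi^{j+1}\cdot\varphi^s=\varphi\cdot(\varphi^{j}\cdot\varphi^s)$ and $\varphi\in K[X_P]\subseteq W_{X_P}$. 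Holonomic modules are Artinian by Proposition \ref{prop:basicholonomic}, so the chain stabilizes. Hence there are $j\ge 0$ and $D\in W_{X_P}(K(s))$ with
\begin{align*}
    \varphi^j\cdot\varphi^s=D\varphi^{j+1}\cdot\varphi^s.
\end{align*}

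Next we shift $j$ down to $0$ using the automorphism $t$ of \eqref{eq:t}. More precisely, we use the $K(s)$-semilinear operator $\tau$ on $N$ induced by $s\mapsto s+1$. It satisfies $\tau(f(s)\varphi^{k}\cdot\varphi^s)=f(s+1)\varphi^{k+1}\cdot\varphi^s$ and $\tau\circ D(s)=D(s+1)\circ\tau$ for $D(s)\in W_{X_P}(K(s))$. This compatibility should follow from the gluing construction: on each chart $V(x_i)^c$ it is the usual identity $D(\varphi^{s+1})$ computed by the product rule. Equivalently, we construct the inverse shift $\tau^{-1}$, sending $\varphi^{k}\cdot\varphi^s$ to $\varphi^{k-1}\cdot\varphi^s$ and $s$ to $s-1$, which intertwines $D(s)$ with $D(s-1)$. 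Applying $\tau^{-j}$ to the relation above gives
\begin{align*}
    \varphi^s=D(s-j)\varphi\cdot\varphi^s
\end{align*}
in $N$. Clearing the denominators of the coefficients of $D(s-j)$, which lie in $K(s)$, by a common nonzero $B(s)\in K[s]$ then yields $B(s)\cdot\varphi^s=D'(s)\varphi\cdot\varphi^s$ with $D'(s)\in W_{X_P}[s]$. Here we use that $W_{X_P}(K(s))=W_{X_P}(K)\otimes_K K(s)$, which is immediate because $W_{X_P}$ is defined over $\qq$ by the generators $x_i,\partial_i$.

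The main obstacle is making the shift rigorous. Concretely, we must check that $\varphi^s\mapsto\varphi\cdot\varphi^s$ together with $s\mapsto s+1$ is a well-defined bijection of $N$ intertwining the actions $D(s)$ and $D(s+1)$. Since the $W_{X_P}(K(s))$-structure on $N$ is defined by gluing the $\mathcal{D}(V(x_i)^c)$-module structures on $K(s)[X_P][\varphi^{-1},x_i^{-1}]\varphi^s$, it suffices to verify the intertwining on each smooth chart. There it reduces to the order-one identity
\begin{align*}
    \theta\cdot(\varphi\cdot\varphi^s)=\theta(\varphi)\varphi^s+\varphi\cdot s\frac{\theta(\varphi)}{\varphi}\varphi^s=(s+1)\frac{\theta(\varphi)}{\varphi}\cdot\varphi\cdot\varphi^s
\end{align*}
for vector fields $\theta$, and this extends to all differential operators by the product rule. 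Injectivity of $N\to\bigoplus_i N_{x_i}$, which holds since $N$ is torsion-free over $K(s)[X_P]$, then transfers the statement to $N$ itself.
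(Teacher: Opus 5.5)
Your proposal is correct and follows essentially the same argument as the paper. Lemma \ref{lem:p-1holonomic} and the Artinian property from Proposition \ref{prop:basicholonomic} force the chain $W_{X_P}(K(s))\varphi^k\cdot\varphi^s$ to stabilize, and then the shift $t^{-k}$ of \eqref{eq:t} followed by clearing denominators produces $B(s)$; your chart-by-chart check that the shift intertwines $D(s)$ with $D(s+1)$ simply spells out what the paper asserts when it introduces $t$.
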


\begin{proof}
    By Proposition \ref{prop:basicholonomic} and Lemma \ref{lem:p-1holonomic}, $W_{X_P}(K(s))\cdot \varphi^s\subseteq K(s)[X_P][\varphi^{-1}]\varphi^s$ is holonomic of finite length.  Therefore, the chain of descending modules $W_{X_P}(K(s))\varphi^k\cdot \varphi^s$ stabilizes for $k$ large. In particular, there is $k\ge 0$ such that
    \begin{align*}
        \varphi^k\cdot \varphi^s\in W_{X_P}(K(s))\varphi^{k+1}\cdot \varphi^s.
    \end{align*}
    Apply the isomorphism $t^{-k}$ \eqref{eq:t} and clearing out denominators, we see that there is a nonzero $B(s)\in K[s]$ such that
    \begin{align*}
        B(s)\cdot \varphi^s\in W_{X_P}(K[s])\varphi\cdot \varphi^s.
    \end{align*}
\end{proof}

Theorem \ref{thm:gcd} implies the set
\begin{align*}
    \left\{ B(s)\in K[s]: \textrm{there exists } D(s)\in W_{X_P}[s] \textrm{ such that } B(s)\cdot \varphi^s=D(s)\varphi\cdot \varphi^s\right\}
\end{align*}
is a nonzero ideal of $K[s].$ Let $b_\varphi(s)$ be the monic generator of the ideal. We refer to $b_\varphi(s)$ as the \textbf{Bernstein-Sato polynomial} of $\varphi$ on $X_P.$ 

\begin{Ex}\label{ex}

Let $n\ge 3.$ Let $X_n\subset \A^{2n}$ be the vanishing locus of $x_1x_2+\ldots +x_{2n-1}x_{2n}$. Then $X_n$ is the Braverman-Kazhdan space $X_P$ associated to $G=\mathrm{SO}(n,n)$ and $P=P_n$ is the stabilizer of an isotropic line. In this case $X_P$ is canonically isomorphic to $X_{P^{\mathrm{op}}},$ so we identify both with $X_n$. It is shown in \cite[Example 7.10]{Hsu:asymptotics} that the Schwartz space $\mathcal{S}(X_n(\rr))$ is the minimal representation contained in the degenerate principal series $\mathrm{Ind}_{P_{n+1}}^{\mathrm{SO}(n+1,n+1)}(1_{-1})$. Furthermore, the Weyl algebra $W_{X_n}(\cc)$ is induced from the action of the universal enveloping algebra $U((\mathfrak{so}_{2n+2})_\cc)$ on $\mathcal{S}(X_n(\rr))$. In other words, we have a natural surjection
\begin{align*}
    U((\mathfrak{so}_{2n+2})_\cc) \longrightarrow W_{X_n}(\cc).
\end{align*}
Equip $U((\mathfrak{so}_{2n+2})_\cc) $ with the PBW filtration. The induced filtration $\mathcal{F}$ on $W_{X_n}(\cc)$ is standard with 
\begin{align*}
    F_1=B_1+\mathfrak{so}_{2n}(\cc)+\cc E.
\end{align*}
By \cite{Josephideal} $\mathrm{gr}^{\mathcal{F}}(W_{X_n}(\cc))\cong \cc[T^\ast X_n^\mathrm{sm}]$  as explained in \cite[Remark 4.3]{FuLiu}.

Let us compute the Bernstein-Sato polynomial of $\varphi(\mathbf{x})=x_1$ on $X_n$. Observe that since $d_{P}=2,$ we have $[\partial_i,x_j]\in \cc+\mathfrak{so}_{2n}(\cc)+\cc E$ for all $i,j$ by \eqref{eq:realization}. As $[x_i,Y]\in \mathrm{span}_\cc\{x_j\}$ for $Y\in \mathfrak{so}_{2n}(\cc)+\cc E,$ dually $[\partial_i,Y]\in \mathrm{span}_\cc\{\partial_j\}$ for all $i$. Recall $\Delta(X_n)\subset W_{X_{n}}$ is the subalgebra of $W_{X_{n}}$ generated by $\cc[X_n]$ and $\mathfrak{so}_{2n}(\cc)+\cc E.$ Then each element in $W_{X_n}$ can be written as $ \sum_i D_ib_i(\mathbf{\partial})$ for some $b_i\in\cc [X_n]$ and $D_i\in \Delta(X_n)$. 

We have $\partial_1\cdot x_1^{s+1}=J(s+1)x_1^s.$ On the other hand, suppose 
\begin{align*}
    B(s)\cdot \varphi^s=\sum_{i,j} s^iD_{ij}b_{ij}(\partial)\varphi\cdot \varphi^{s}
\end{align*}
for some $B\in \cc[s]$ and $D_{ij}\in \Delta(X_n), b_{ij}\in \cc[X_n].$ Note that given $b\in \cc[X_n]$ there is $b'\in \cc[x_1]$ such that for any $n\ge 0,$ $b(\partial)(x_1^n)=b'(\partial_1)(x_1^n)$.  Thus we may assume $b_{ij}(\partial)=\partial_1^j.$ Since elements in $\Delta(X_n)$ are contained in the nonnegative weight spaces of $[E,\cdot],$ $D_{i0}=0$ for all $i$. It follows that $J(s+1)|B(s),$ and thus $b_\varphi(s)=J(s+1)$.
\end{Ex}

\subsection{Applications to zeta integrals}

For locally integrable functions $f_1,f_2$ on $X_P^\circ(\rr)$ define pairings
\begin{align*}
    \langle f_1,f_2\rangle&:=\int_{X_P^\circ(\rr)} f_1(x)f_2(x)dx,\\
    (f_1,f_2)&:=\langle f_1,\bar{f}_2\rangle,
\end{align*}
whenever the integral is absolutely convergent. Notice that for $f_1,f_2\in L^2(X_P(\rr))$
\begin{align*}
    (f_1,f_2)=(\mathcal{F}_{P|P^{\mathrm{op}}}(f_1),\mathcal{F}_{P|P^{\mathrm{op}}}(f_2)).
\end{align*}

 Let $f\in C^\infty_c(X_P^\circ(\rr))$ and $\varphi\in C^\infty(X_P^\circ(\rr)).$ Let $D\in W_{X_P}(\cc)$. We claim
\begin{align*}
     \langle D(\varphi), f\rangle=\langle \varphi, D^{\ast}(f) \rangle   
\end{align*}
for some $D^\ast\in W_{X_P}(\cc)$. This is clear if $D\in \cc[X_P],$ where $D^\ast =D$. To prove the claim, it suffices to show the existence of $\partial_i^*$ for each $i$. Choose $f'\in C^\infty_c(X_P^\circ(\rr))$ such that $f'|_{\mathrm{Supp}(f)}=1$. We have
\begin{align*}
     \langle \partial_i(\varphi), f\rangle&= \langle \partial_i(\varphi f'), f\rangle\\
     &=( \partial_i(\varphi f'), \bar{f})\\
     &=(  \mathcal{F}_{P|P^{\mathrm{op}}}(\partial_i(\varphi f')),  \mathcal{F}_{P|P^{\mathrm{op}}}(\bar{f}))\\
     &=(2\pi\sqrt{-1})^{d_P}( x_i^{\mathrm{op}}\mathcal{F}_{P|P^{\mathrm{op}}} (\varphi f'), \mathcal{F}_{P|P^{\mathrm{op}}}(\bar{f}))\\
     &=(2\pi\sqrt{-1})^{d_P}( \mathcal{F}_{P|P^{\mathrm{op}}} (\varphi f'), x_i^{\mathrm{op}}\mathcal{F}_{P|P^{\mathrm{op}}}(\bar{f}))\\
          &=(-1)^{d_P}( \mathcal{F}_{P|P^{\mathrm{op}}} (\varphi f'), \mathcal{F}_{P|P^{\mathrm{op}}}(\partial_i(\overline{ f})))\\
          &=(-1)^{d_P}( \mathcal{F}_{P|P^{\mathrm{op}}} (\varphi f'), \mathcal{F}_{P|P^{\mathrm{op}}}(\overline{\partial_i(f)}))\\
          &=(-1)^{d_P}( \varphi f', \overline{\partial_i(f)})\\
     &=\langle \varphi, (-1)^{d_P}\partial_i (f)\rangle.
\end{align*}
Hence $\partial_i^\ast=(-1)^{d_P}\partial_i.$ This is the integration by parts on $X_P(\rr)$.

\begin{Thm}\label{thm:Z+ext}
    Let $\varphi\in \rr[X_P]-\rr$ and $U:=\{\varphi>0\}.$ Let $f\in \mathcal{S}(X_P(\rr)).$ The integral 
    \begin{align*}
       Z_+(f,s):=\int_{U} f(x)\varphi(x)^s dx
    \end{align*}
    originally defined on $\mathrm{Re}(s)\ge 0$ extends to a meromorphic function on $\cc$. Furthermore, if we write $ b_\varphi(s)=\prod_{\lambda} (s+\lambda),$ and let
    \begin{align*}
       \gamma_\varphi(s):={\prod_{\lambda}\Gamma(s+\lambda)},
    \end{align*}
    where $\Gamma(s)$ is the gamma function. Then $\gamma_\varphi(s)^{-1}Z_+(f,s)$ is entire. 
\end{Thm}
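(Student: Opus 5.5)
The plan is to follow Bernstein's classical argument, using the functional equation of Theorem \ref{thm:gcd} together with the integration by parts formula $\langle D(\varphi),f\rangle=\langle\varphi,D^\ast(f)\rangle$ established just above.

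By definition of $b_\varphi$, choose $D(s)\in W_{X_P}[s]$ with
\begin{align*}
b_\varphi(s)\varphi^s=D(s)\varphi\cdot\varphi^s .
\end{align*}
For $\mathrm{Re}(s)$ large, the function $x\mapsto\varphi_+(x)^{s+1}:=\mathbf{1}_U(x)\varphi(x)^{s+1}$ is $C^N$ on $X_P^\circ(\rr)$ for $N$ comparable to $\mathrm{Re}(s)$. On $U$ the identity $D(s)\varphi_+^{s+1}=b_\varphi(s)\varphi_+^{s}$ holds pointwise, since the $W_{X_P}$-module structure on $K(s)[X_P][\varphi^{-1}]\varphi^s$ was defined through the usual differentiation rules on the smooth charts $V(x_i)^c$. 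Both sides vanish to high order along $\{\varphi=0\}$, so the identity holds on all of $X_P^\circ(\rr)$.

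Pairing with $f$ then gives, for $\mathrm{Re}(s)\gg0$,
\begin{align*}
b_\varphi(s)Z_+(f,s)=\langle D(s)\varphi_+^{s+1},f\rangle=\langle\varphi_+^{s+1},D(s)^\ast f\rangle=Z_+(D(s)^\ast f,s+1),
\end{align*}
where $D(s)^\ast\in W_{X_P}(\cc)[s]$. Since $W_{X_P}(\cc)=W_{X_P(\rr)}$ preserves $\mathcal{S}(X_P(\rr))$, the function $D(s)^\ast f$ is a polynomial in $s$ with Schwartz coefficients. Iterating $n$ times gives
\begin{align*}
Z_+(f,s)=\frac{Z_+(D_n(s)f,s+n)}{b_\varphi(s)b_\varphi(s+1)\cdots b_\varphi(s+n-1)} ,
\end{align*}
with $D_n(s)f$ polynomial in $s$ with values in $\mathcal{S}(X_P(\rr))$. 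The numerator is holomorphic on $\mathrm{Re}(s)>-n$, which gives the meromorphic continuation.

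For the gamma statement, note that $\gamma_\varphi(s)^{-1}\prod_{j=0}^{n-1}b_\varphi(s+j)^{-1}=\gamma_\varphi(s+n)^{-1}$, because $\Gamma(s+\lambda+n)=\Gamma(s+\lambda)\prod_{j<n}(s+\lambda+j)$. Hence
\begin{align*}
\gamma_\varphi(s)^{-1}Z_+(f,s)=\gamma_\varphi(s+n)^{-1}Z_+(D_nf,s+n),
\end{align*}
which is holomorphic on $\mathrm{Re}(s)>-n$ because $1/\Gamma$ is entire. Letting $n\to\infty$ shows that $\gamma_\varphi(s)^{-1}Z_+(f,s)$ is entire.

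There are two analytic points to check, and the second is the main obstacle.

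First, $Z_+(g,s)$ must be absolutely convergent and holomorphic on $\mathrm{Re}(s)>0$ for every $g\in\mathcal{S}(X_P(\rr))$. Since $\varphi$ has polynomial growth, this should follow from the decay and boundedness of Schwartz functions on $X_P(\rr)$ in \cite{Getz:Hsu:Leslie, Hsu:asymptotics}. Note that Schwartz functions there are bounded but not necessarily smooth at the origin of $X_P$.

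Second, and more delicate, is the integration by parts. The formula was proved only for $f\in C^\infty_c(X_P^\circ(\rr))$ against smooth $\varphi$, whereas here we need it for a Schwartz $f$, possibly singular at the cone point, against $\varphi_+^{s+1}$, which is only $C^N$. I would handle this in three steps.
\begin{enumerate}
\item Approximate $\varphi_+^{s+1}$ by smooth functions in $C^N$ on compacta. This should work since $D(s)$ has bounded order.
\item Approximate $f$ by $f\chi_\epsilon$, where $\chi_\epsilon$ is a cutoff vanishing near the origin and near infinity.
\item Control the error terms. The origin has measure zero and both functions are bounded near it. The formula $\partial_i^\ast=(-1)^{d_P}\partial_i$ was derived from unitarity of $\mathcal{F}_{P|P^{\mathrm{op}}}$, which holds on all of $L^2$, so it should extend by density to pairs for which both sides converge absolutely.
\end{enumerate}
Concretely, I would rerun the unitarity computation with $f'$ replaced by $\chi_\epsilon$ and pass to the limit using dominated convergence.
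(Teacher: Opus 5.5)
Your proposal follows the paper's proof. Both combine the Bernstein--Sato identity $b_\varphi(s)\varphi^s=D(s)\varphi\cdot\varphi^s$ with the adjoint formula $\partial_i^\ast=(-1)^{d_P}\partial_i$ to get $b_\varphi(s)Z_+(f,s)=Z_+(D(s)^\ast f,s+1)$, pass between $C^\infty_c(X_P^\circ(\rr))$ and $\mathcal{S}(X_P(\rr))$ by truncation, and iterate. Your version is, if anything, more careful than the paper's, which works directly on $\mathrm{Re}(s)>0$ via an exhaustion of $U$ by compacta, handles the Schwartz case in one line, and leaves the identity $\gamma_\varphi(s)^{-1}Z_+(f,s)=\gamma_\varphi(s+n)^{-1}Z_+(D_nf,s+n)$ implicit; just drop the claim that Schwartz functions are bounded at the cone point, which the paper neither proves nor uses.
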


\begin{proof}
 Assume first $f\in C^\infty_c(X_P^\circ(\rr)).$ Choose an ascending sequence of compact subsets $U_n$ of $U$ such that $U=\cup _n U_n$. Then $\mathbf{1}_{U_n} \to \mathbf{1}_U$ in $L^1(X_P(\rr))$. Let $b_\varphi$ be the Bernstein-Sato polynomial of $\varphi$ so that for all $x\in U$
 \begin{align*}
     b_\varphi(s) \cdot\varphi(x)^s=(D(s)\cdot\varphi^{s+1})(x)
 \end{align*}
 for some $D(s)\in W_{X_P}[s].$ Note that $\varphi(x)^s$ is well-define for all $x\in U,$ so this identity becomes an identity of holomorphic functions on $\mathrm{Re}(s)>0.$
 
 Then for $\mathrm{Re}(s)>0$
\begin{align*}
    b_{\varphi}(s)Z_+(f,s)&=\lim_{n\to \infty}\langle D(s)\varphi^{s+1} ,f\mathbf{1}_{U_n}\rangle \\
    &=\lim_{n\to \infty}\langle \varphi^{s+1} , D(s)^{\ast}(f\mathbf{1}_{U_n})\rangle \\
    &=\langle \varphi^{s+1}, D(s)^\ast (f)\mathbf{1}_U \rangle \\
    &=Z_+(D(s)^{\ast}f,s+1).
\end{align*}
For general $f\in \mathcal{S}(X_P(\rr))$, choose by smooth truncations $f_i\in C^\infty_c(X_P^\circ(\rr))$ converging to $f$ in $L^1(X_P(\rr)).$ Then by Lebesgue's dominated convergence theorem for $\mathrm{Re}(s)>0$
\begin{align*}
      b_\varphi(s)Z_+(f,s)=\lim_{i\to\infty} b(s)Z_+(f_i,s)=\lim_{i\to\infty} Z_+(D(s)^\ast f_i,s+1)=Z_+(D(s)^\ast f,s+1).
\end{align*}
Observe that RHS is absolutely convergent for $\mathrm{Re}(s)>-1.$ Therefore, by repeating the process  we can analytically extend the domain of definition of $Z_+(f,s)$ to all $\cc,$ with possible poles controlled by $b_\varphi(s+k)$ for $k\in \zz_{\ge 0}$ as claimed.
\end{proof}

Let $f\in \mathcal{S}(X_P(\rr))$ and $\varphi\in \rr[X_P]-\rr.$ For a quasi-character $\chi:\rr^\times\to \cc^\times,$ define the zeta integral
\begin{align*}
    Z(f,\chi):=\int_{X_P^\circ(\rr)} f(x)\chi(\varphi(x)) dx.
\end{align*}
Note that we can write $\chi=|\cdot|^{s}\mathrm{sgn}^j$ where $s\in \cc$ and $j\in \{0,1\}.$ For $\mathrm{Re}(s)>0$, write
\begin{align*}
     Z(f,\chi)=\int_{\varphi>0} f(x)\varphi(x)^s+(-1)^j\int_{\varphi<0} f(x)(-\varphi(x))^sdx.
\end{align*}
Since $b_\varphi=b_{-\varphi},$ by Theorem \ref{thm:Z+ext} $Z(f,\chi)$ can be extended to a meromorphic function in $\chi$ such that $\gamma_\varphi(s)^{-1}Z(f,|\cdot|^s\mathrm{sgn}^j)$ is entire. The same argument applies to $X_P(\cc)$ using the fact that $W_{X_P}(\cc\otimes_\rr\cc)=W_{X_P}(\cc)\oplus \overline{W_{X_P}(\cc)}$ under the complex conjugation (c.f. \cite[pp. 71-73]{Igusa:zeta}).

\quash{
\begin{proof}
$A_m$: node $\ell$: We may assume $2\ell\le m+1$. Then we need to show
\begin{align*}
    \frac{(m+1-\ell)\ell+\ell+\ell(\ell-1)}{2}=m+1> 2(\ell-1).
\end{align*}

$B_m$:

$\ell=1:$ We need to show 
\begin{align*}
    \frac{(2m-1)+3+(4m-5)}{6}=  \frac{2m-1}{2}>m-1
\end{align*}

$0<\ell-1\le 2m-2\ell-1$: The unipotent has dimension
\begin{align*}
    \frac{m(2m+1)-\ell^2-(2m-2\ell+1)(m-\ell)}{2}=\frac{4m\ell-3\ell^2+\ell}{2}
\end{align*}

Then 
\begin{align*}
    r_0&=1+\frac{2m\ell-\ell^2-\binom{\ell}{2}+(\ell-1)\ell+\left(4m-4\ell+2\lfloor \frac{\ell-1}{2}\rfloor-1\right)\lfloor \frac{\ell+1}{2}\rfloor}{\ell+2\lfloor \frac{\ell+1}{2}\rfloor}.
\end{align*}
We need to show
\begin{align*}
    \frac{r_0}{2}>m-\ell+\lfloor \frac{\ell-1}{2}\rfloor
\end{align*}

If $\ell=2k+1$, we have
\begin{align*}
    r_0&=1+\frac{(2k+1)(2m-(2k+1)+k)+(4m-4(2k+1)+2k-1)(k+1)}{4k+3}\\
    &=1+\frac{(2k+1)(2m-k-1)+(4m-6k-5)(k+1)}{4k+3}\\
    &=1+\frac{(4k+3)(2m-k-1)-(4k+3)(k+1)}{4k+3}\\
    &=2m-2k-1=2m-\ell
\end{align*}
If $\ell=2k$, we have
\begin{align*}
    r_0&=1+\frac{k(4m-4k+2k-1)+(4m-8k+2k-2-1)k}{4k}\\
    &=1+\frac{8m-8k-4}{4}\\
    &=2m-2k=2m-\ell.
\end{align*}
Then in both cases
\begin{align*}
    r_0>2(m-k-1).
\end{align*}

Suppose $2m-2\ell-1< \ell-1,\ell<m$. We have
\begin{align*}
    r_0&=1+\frac{2m\ell-\ell^2-\binom{\ell}{2}+(2m-2\ell-1)(2m-2\ell)+\left(4m-4\ell+2\lfloor \frac{\ell-1}{2}\rfloor-1\right)\lfloor \frac{\ell+1}{2}\rfloor}{(2m-2\ell+2\lfloor \frac{\ell+1}{2} \rfloor)}.
\end{align*}
We need to show
\begin{align*}
    r_0>2(m-\ell+\lfloor\frac{\ell-1}{2}\rfloor).
\end{align*}

For $\ell=2k+1$,
\begin{align*}
      r_0&=1+\frac{(2k+1)(2m-3k-1)+(2m-4k-3)(2m-4k-2)+\left(4m-6k-5\right)(k+1)}{2m-2k}\\
      &=1+\frac{(2k+2)(2m-3k-1)+(2m-4k-4)(2m-4k-2)+\left(4m-6k-6\right)(k+1)}{2m-2k}\\
      &=1+2\frac{(m-2k-2)(m-2k-1)+\left(2m-3k-2\right)(k+1)}{m-k}\\
      &=1+2\frac{(m-2k-2)(m-2k-1)+\left(2m-3k-2\right)(k+1)}{m-k}\\
      &=1+2(m-k-1)\\
      &=2m-\ell
\end{align*}
For $\ell=2k$, 
\begin{align*}
    r_0&=1+\frac{k(4m-6k+1)+(2m-4k-1)(2m-4k)+\left(4m-6k-3\right)k}{2m-2k}\\
    &=1+\frac{(2m-4k-1)(m-2k)+\left(4m-6k-1\right)k}{m-k}\\
    &=1+2m-2k-1\\
    &=2m-\ell.
\end{align*}
We have
\begin{align*}
    r_0>2(m-\ell+\lfloor\frac{\ell-1}{2}\rfloor).
\end{align*}

$\ell=m$. We have
\begin{align*}
    r_0=1+\frac{(2m+1)m-m^2+2\lfloor \frac{m-1}{2}\rfloor \lfloor \frac{m+1}{2}\rfloor}{\lfloor\frac{m+1}{2}\rfloor}=\begin{cases}
    3m+2 & m \textrm{ is odd,}\\
    3m+1 & m \textrm{ is even.}
    \end{cases}
\end{align*}
Then
\begin{align*}
    r_0>4\lfloor \frac{m-1}{2}\rfloor.
\end{align*}

$C_m$:

$\ell=1$ is clear, and we assume $m\ge 3$.

Suppose $1\le \ell-1\le 2m-2\ell$. The unipotent has dimension
\begin{align*}
    \frac{m(2m+1)-\ell^2-(m-\ell)(2m-2\ell+1)}{2}=\frac{4m\ell-3\ell^2+\ell}{2}
\end{align*}
We have
\begin{align*}
    r_0=1+\frac{2m\ell-\ell^2-\binom{\ell}{2}+\ell(\ell-1)+(4m-4\ell+1+2\lfloor\frac{\ell}{2}\rfloor)\lfloor \frac{\ell}{2}\rfloor }{\ell+2\lfloor \frac{\ell}{2}\rfloor}
\end{align*}
and we need to show
\begin{align*}
    r_0>2(m-\ell+\lfloor \frac{\ell}{2}\rfloor ).
\end{align*}
If $\ell=2k+1$, we have
\begin{align*}
    r_0=1+\frac{(2m-k-1)(2k+1)+(4m-6k-3)k}{4k+1}=2m-2k=2m-\ell+1.
\end{align*}
If $\ell=2k$, we have
\begin{align*}
    r_0=1+\frac{(4m-2k-1)k+(4m-6k+1)k}{4k}=2m-2k+1=2m-\ell+1.
\end{align*}
In any case we have
\begin{align*}
    r_0>2(m-\ell+\lfloor \frac{\ell}{2}\rfloor ).
\end{align*}

Suppose $2m-2\ell<\ell-1$. We have
\begin{align*}
    r_0=1+\frac{2m\ell-\ell^2-\binom{\ell}{2}+(2m-2\ell)(2m-2\ell+1)+(4m-4\ell+1+2\lfloor\frac{\ell}{2}\rfloor)\lfloor \frac{\ell}{2}\rfloor }{2m-2\ell+1+2\lfloor \frac{\ell}{2}\rfloor},
\end{align*}
and we need to show
\begin{align*}
    r_0>2(m-\ell+\lfloor\frac{\ell}{2}\rfloor).
\end{align*}
If $\ell=2k+1,$ we have
\begin{align*}
    r_0&=1+\frac{(2k+1)(2m-3k-1)+2(m-2k-1)(2m-4k-1)+(4m-6k-3)k}{2m-2k-1}\\
    &=2m-2k=2m-\ell+1.
\end{align*}
If $\ell=2k$, we have
\begin{align*}
     r_0&=1+\frac{(4m-6k+1)k+(2m-4k)(2m-4k+1)+(4m-6k+1)k}{2m-2k+1}\\
     &=2m-2k+1=2m-\ell+1.
\end{align*}
We have
\begin{align*}
r_0>2(m-\ell+\lfloor\frac{\ell}{2}\rfloor).
\end{align*}

$D_m$ $(m\ge 4)$:

Suppose $\ell=1$. The dimension of unipotent is
\begin{align*}
    \frac{m(2m-1)-\ell^2-(m-\ell)(2m-2\ell-1)}{2}=2m\ell-\ell^2-\binom{\ell+1}{2}
\end{align*}
We have
\begin{align*}
    r_0=1+\frac{2m-2+2(m-2)}{2}=2m-2>2(m-2).
\end{align*}

Suppose $1\le \ell-1\le 2m-2\ell-2.$ We have
\begin{align*}
    r_0=1+\frac{2m\ell-\ell^2-\binom{\ell+1}{2}+\ell(\ell-1)+2(m-\ell-1)+(4m-4\ell-3+2\lfloor\frac{\ell}{2}\rfloor)\lfloor \frac{\ell}{2}\rfloor }{\ell+1+2\lfloor \frac{\ell}{2}\rfloor }
\end{align*}
and we need to show
\begin{align*}
    r_0>2(m-\lceil\frac{\ell}{2} \rceil-1).
\end{align*}
If $\ell=2k+1$, 
\begin{align*}
    r_0&=1+\frac{(2k+1)(2m-k-2)+2(m-2k-2)+(4m-6k-7)k}{4k+2}\\
    &=1+2m-2k-3=2m-\ell-1.
\end{align*}
If $\ell=2k,$
\begin{align*}
    r_0&=1+\frac{k(4m-2k-3)+2(m-2k-1)+(4m-6k-3)k}{4k+1}\\
    &=2m-2k-1=2m-\ell-1.
\end{align*}
We have
\begin{align*}
    r_0>2(m-\lceil\frac{\ell}{2} \rceil-1).
\end{align*}

Suppose $2m-2\ell-2<\ell-1<m-2$. We have
\begin{align*}
    r_0=1+\frac{2m\ell-\ell^2-\binom{\ell+1}{2}+(2m-2\ell-2)(2m-2\ell-1)+2(m-\ell-1)+(4m-4\ell-3+2\lfloor\frac{\ell}{2}\rfloor)\lfloor \frac{\ell}{2}\rfloor }{2m-2\ell+2\lfloor \frac{\ell}{2}\rfloor }.
\end{align*}
If $\ell=2k+1$,
\begin{align*}
     r_0&=1+\frac{(2k+1)(2m-3k-2)+(2m-4k-4)(2m-4k-2)+(4m-6k-7)k}{2m-2k-2}\\
    &=2m-2k-2=2m-\ell-1.
\end{align*}
If $\ell=2k+1$
\begin{align*}
    r_0&=1+\frac{k(4m-6k-1)+2(m-2k-1)(2m-4k)+(4m-6k-3)k}{2m-2k}\\
    &=2m-2k-1=2m-\ell-1,
\end{align*}
so
\begin{align*}
    r_0>2(m-\lceil\frac{\ell}{2} \rceil-1).
\end{align*}

Suppose $\ell=m,m-1$. The dimension of unipotent is
\begin{align*}
     \frac{m(2m-1)-m^2}{2}=\binom{m}{2}.
\end{align*}
Then
\begin{align*}
    r_0=1+\frac{\binom{m}{2}+ 2\lfloor \frac{m-2}{2}\rfloor\lfloor \frac{m}{2}\rfloor}{\lfloor \frac{m}{2}\rfloor}=2m-2>4\lfloor \frac{m-2}{2}\rfloor.
\end{align*}

$E_6$ (78 dim)

$\ell=1,6$
\begin{align*}
    r_0=1+\frac{(78-46)/2+6}{2}=12>6
\end{align*}

$\ell=2$
\begin{align*}
    r_0=1+\frac{(78-36)/2+29}{5}=11>10
\end{align*}

$\ell=3,5$
\begin{align*}
    r_0=1+\frac{(78-28)/2+23}{6}=9>6
\end{align*}

$\ell=4$

\begin{align*}
    r_0=1+\frac{(78-20)/2+55}{14}=7>6
\end{align*}

$E_7$, (133 dim)

$\ell=1$
\begin{align*}
    r_0=1+\frac{(133-67)/2+47}{5}=17>16
\end{align*}

$\ell=2$
\begin{align*}
    r_0=1+\frac{(133-49)/2+49}{7}=14>12
\end{align*}

$\ell=3$
\begin{align*}
    r_0=1+\frac{(133-39)/2+93}{14}=11>10
\end{align*}

$\ell=4$
\begin{align*}
    r_0=1+\frac{(133-27)/2+115}{24}=8>7
\end{align*}

$\ell=5$
\begin{align*}
    r_0=1+\frac{(133-33)/2+85}{15}=10>8
\end{align*}

$\ell=6$
\begin{align*}
    r_0=1+\frac{(133-49)/2+54}{8}=13>12
\end{align*}

$\ell=7$
\begin{align*}
    r_0=1+\frac{(133-79)/2+24}{3}=18>16
\end{align*}

$E_8$, 

$\ell=1$
\begin{align*}
    r_0=1+\frac{(248-92)/2+120}{9}=23>22
\end{align*}

$\ell=2$
\begin{align*}
    r_0=1+\frac{(248-64)/2+180}{17}=17>16
\end{align*}

$\ell=3$
\begin{align*}
    r_0=1+\frac{(248-52)/2+214}{26}=13>12
\end{align*}

$\ell=4$
\begin{align*}
    r_0=1+\frac{(248-36)/2+358}{58}=9>\frac{42}{5}
\end{align*}

$\ell=5$
\begin{align*}
    r_0=1+\frac{(248-40)/2+336}{44}=11>10
\end{align*}

$\ell=6$
\begin{align*}
    r_0=1+\frac{(248-54)/2+215}{24}=14>13
\end{align*}

$\ell=7$
\begin{align*}
    r_0=1+\frac{(248-82)/2+169}{14}=19>18
\end{align*}

$\ell=8$
\begin{align*}
    r_0=1+\frac{(248-134)/2+83}{5}=29>28
\end{align*}

$F_4$, 

$\ell=1$
\begin{align*}
    r_0=1+\frac{(52-22)/2+13}{4}=8>6
\end{align*}

$\ell=2$
\begin{align*}
    r_0=1+\frac{(52-12)/2+40}{15}=5>4
\end{align*}

$\ell=3$
\begin{align*}
    r_0=1+\frac{(52-12)/2+40}{10}=7>6
\end{align*}

$\ell=4$
\begin{align*}
    r_0=1+\frac{(52-22)/2+25}{4}=11>10
\end{align*}

$G_2$, 

$\ell=1$
\begin{align*}
    r_0=1+\frac{(14-4)/2+7}{3}=5>4
\end{align*}

$\ell=2$
\begin{align*}
    r_0=1+\frac{(14-4)/2+7}{6}=3>2
\end{align*}

\end{proof}
}
\bibliographystyle{alpha}

\bibliography{bibs}
\end{document}